\documentclass[11pt]{amsart}
\usepackage[english]{babel}
\usepackage{a4wide,color,graphicx,amsmath,amssymb,verbatim,mathrsfs,latexsym}
 \usepackage[colorlinks=true]{hyperref}
\allowdisplaybreaks
\newtheorem{theo}{Theorem}
\newtheorem{prop}{Proposition}
\newtheorem{lemma}{Lemma}

\newtheorem{cor}{Corollary}
\newtheorem{rem}{Remark}

\newcommand{\R}{\mathbb{R}}	
\newcommand{\N}{\mathbb{N}}	
\newcommand{\eps}{\varepsilon}	
\newcommand{\pa}{\partial}		
\newcommand{\Div}{\textrm{div}\,}	
\newcommand{\fun}[5]{  			
\begin{array}{cccc}
{#1}\,: & #2 & \to & #3 \\
\phantom{{#1}\,:\,} & #4 & \mapsto & #5
\end{array} }
\newcommand{\na}{\nabla}		
	
\def\dive{\textnormal{div}}

\newcommand{\IRd}{\int_{\R^d}}

\newcommand{\umlaut}{\"}

\newcommand{\Bt}{ {B^{(\tau)}} }
\newcommand{\mtau}{m^{(\tau)}}
\newcommand{\utau}{u^{(\tau)}}
\newcommand{\wtau}{w^{(\tau)}}
\newcommand{\Etau}{E^{(\tau)}}
\newcommand{\Rtau}{R^{(\tau)}}

\title{Global existence of weak even solutions for an isotropic Landau equation with Coulomb potential}
\author{Maria Gualdani and Nicola Zamponi}
\address{Department of Mathematics, George Washington University, 801 22nd Street NW, 20052 Washington DC, USA. \\
Institute for Analysis and Scientific Computing, Vienna University of  
	Technology, Wiedner Hauptstra\ss e 8--10, 1040 Wien, Austria}
\email{gualdani@gwu.edu, nicola.zamponi@tuwien.ac.at}
\date{\today}

\begin{document}

\thanks{MPG is supported by NSF DMS-1412748 and DMS-1514761. MPG would like to thank NCTS Mathematics Division Taipei for their kind hospitality. 
NZ acknowledges support from the Austrian Science Fund (FWF), grants P22108, P24304, W1245.} 

\begin{abstract}
%
%
%
In this manuscript we consider an isotropic modification for the Landau equation with Coulomb potential in three space dimensions.
Global in time existence of weak solutions for even initial data is shown by employing a time semi-discretization of the equation,
an entropy inequality and a uniform estimate for the second moment of the solution to the discretized problem.
Moreover, under an additional condition that has to be satisfied uniformly over time, uniform boundedness of the solution is proved,
with bounds depending solely on the mass, second moment and entropy of the solution.
A byproduct of our analysis is a proof of improved regularity for weak solutions to the Landau equation with Coulomb potential.
\end{abstract}

\maketitle

\pagestyle{headings}		

\markboth{Global existence of weak even solutions }{M. Gualdani, N. Zamponi}

\section{Introduction}

In recent years the nonlinear nonlocal parabolic equation   
\begin{equation}\label{landau}
\begin{cases} u_t = \Div(a[u]\na u - u\na a[u]),\qquad -\Delta a[u] = u,\qquad x\in \R^3,~~ t>0,\\
 u(\cdot,0) =u_0
\end{cases}	
  \end{equation}	
  has attracted the interest of several mathematicians since it possesses interesting mathematical properties. Formally (\ref{landau}) can be rewritten as 
  $$
  u_t = a[u] \Delta u + u^2,
  $$ 
 which is reminiscent to the semilinear heat equation and the Keller-Segel model. However the solutions to (\ref{landau}) behave fundamentally different. 
 
  Equation (\ref{landau}) can be seen as an isotropic modification of a model in plasma physics, the so-called Landau equation, which reads in the spatially homogeneous setting as  
  $$
  u_t = \Div(A[u]\na u - u\na a[u]),\qquad -\Delta a[u] = u,
  $$
  with the diffusion matrix $A[u]$ defined as
  $$
 A[u](x,t):= \frac{1}{8\pi} \int_{\mathbb{R}^3} \frac{1}{|y|}\left( \textrm{Id}- \frac{ y\otimes y}{|y|^{2}}\right)u(x-y)\;dy.
  $$
  Then (\ref{landau}) follows from the Landau equation if we replace $A[u]$ with its trace, since $\textrm{Tr}A[u] = a[u]$.
  
A modification of (\ref{landau}) was first proposed by Krieger and Strain in \cite{KS}: the authors show the global in time existence of radial and monotone decreasing solutions to the equation 
  $$
  u_t =  a[u] \Delta u + \alpha u^2,\quad \alpha \in (0,\frac{2}{3}).
  $$
  This result was later extended to $\alpha \in (0,\frac{74}{75})$ in \cite{GKS} by mean of a new non-local weighted Poincare inequality. 
  
  Recently the first author and collaborator showed in \cite{GG} global in time existence of smooth and bounded radial and monotone decreasing solutions to (\ref{landau}) for initial data that have finite mass, energy and entropy.  This last result puts in evidence how solutions to a non-linear equation with a non-local diffusivity such as $a[u]$ behave drastically different from (and better than) Keller-Segel or semilinear heat equation.
  
The condition of radial symmetry and monotone decreasing simplify the analysis in \cite{GG} considerably: bounds on the solution are obtained using comparison principle for (\ref{landau}) and for the associated mass function. Pointwise upper bounds are obtained using barriers: these arguments are based on the observation that certain functions are supersolutions for the elliptic operator under certain assumptions on the solution. This is where the radial symmetry and monotonicity come to play.   

Without the simplifying assumption of radial symmetry, the analysis of \eqref{landau} is challenging.
The nonlocal nature of the diffusion $a[u]$ prevents the equation from satisfying a comparison principle. 
Moreover maximum principle does not give useful insights, since we only know that at any point of maximum for $u$ it holds $u_t \le u^2$,
which does not provide us with a global-in-time upper bound for the solution.
We also remark that the techniques for the classical Landau equation do not apply directly to (\ref{landau}), as they
rely on the conservation of the second moment which is not the case here.  

In this manuscript we attempt to remove the condition of radial symmetry. 
These difficulties related to the lack of comparison/maximum principles 
are overcome by performing an exponential transformation $ u = e^w$  and rewriting (\ref{landau}) for $w$,
which allows us to show the non-negativity of the solutions.

Our first main result is concerned with the {\em{global existence of weak even solutions }}to (\ref{landau}) in dimension $d=3$.

From now on let $\gamma(x)\equiv (1+|x|)^{-1}$ for $x\in\R^{3}$, and {let us define the space $W^{1,\infty}_{c}(\R^{3})$ of compactly supported
$W^{1,\infty}(\R^{3})$ functions equipped with the following notion of convergence: a sequence $(\phi_{n})_{n\in\N}\subset W^{1,\infty}_{c}(\R^{3})$
converges in $W^{1,\infty}_{c}(\R^{3})$ to a function $\phi\in W^{1,\infty}_{c}(\R^{3})$ if (and only if) $\phi_{n}\to\phi$ in $W^{1,\infty}(\R^{3})$
and $\cup_{n\in\N}\mbox{supp}(\phi_{n})$ is bounded.}
\begin{theo}\label{thr.ex}
Let $u_0 : \R^3\to (0,\infty)$ be an even function such that $u_0\in L^1(\R^3)$, $\int_{\R^3}u_0\log u_0 dx < \infty$,
and $\int_{\R^{3}}|x|^{2}u_{0}(x)dx<\infty$.
Then there exists $u : \R^3\times [0,\infty)\to [0,\infty)$ even function such that, for every $T>0$,
\begin{align*}
& \sqrt{u} \in L^2(0,T; H^1(\R^3,\gamma(x)dx)),\qquad u, u\log u\in L^{\infty}(0,T; L^{1}(\R^{3})),\\
& a\in L^\infty(0,T; L^3_{loc}(\R^3)),\qquad \na a\in L^\infty(0,T; L^{3/2}_{loc}(\R^3)),\\
& {\pa_{t}u \in L^{r}(0,T; W^{2,\frac{r}{r-1}}(\R^{3},\gamma^{-\frac{1}{3(r-1)}}(x)dx)')\cap (L^{\infty}(0,T; W^{1,\infty}_{c}(\R^{3})))' }\quad\mbox{ for some }r>1,
\end{align*}
and $u$ satisfies the following weak formulation of \eqref{landau}
\begin{align}\label{landau.w}
&\int_0^T\langle \pa_{t}u\, , \phi\,\rangle dt + \int_0^T\int_{\R^3}(a\na u - u\na a)\cdot\na\phi\, dx dt = 0
\qquad\forall\phi\in L^{\infty}(0,T; W^{1,\infty}_{c}(\R^{3})),\\
& \lim_{t\to 0^+}u(t) = u_0\quad \mbox{in }W^{2,\frac{r}{r-1}}(\R^{3},\gamma^{-\frac{1}{3(r-1)}}(x)dx)'.\nonumber
\end{align}
The function $a[u]$ is given by 
\begin{align}
a[u](x,t) = \frac{1}{4\pi}\int_{\R^3}\frac{u(y,t)}{|x-y|}dy\qquad x\in\R^3,~~t>0. \label{a}
\end{align}
Moreover, the mass is conserved: $\int_{\R^{3}}u(x,t)dx = \int_{\R^{3}}u_{0}(x)dx$ for $t\in [0,T]$, while the second
moment of $u$ is locally bounded in time: $\sup_{t\in [0,T]}\int_{\R^{3}}|x|^{2}u(x,t)dx < \infty$.
\end{theo}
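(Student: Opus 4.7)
I plan to construct $u$ as the limit of a time-semidiscrete scheme, in which positivity is built in via the exponential substitution $u=e^{w}$. Fix $\tau=T/N>0$. Having constructed $u^{(\tau)}_{k-1}>0$, I look for $w^{(\tau)}_{k}\in H^{1}(\R^{3})$ such that $u^{(\tau)}_{k}=\exp(w^{(\tau)}_{k})$ solves, in the weak sense,
\[
\frac{u^{(\tau)}_{k}-u^{(\tau)}_{k-1}}{\tau}=\dive\!\bigl(u^{(\tau)}_{k}(a[u^{(\tau)}_{k}]\nabla w^{(\tau)}_{k}-\nabla a[u^{(\tau)}_{k}])\bigr)
\]
plus a small higher-order regularization (e.g.\ $-\epsilon(\Delta^{2}w+w\gamma^{-1})$) that restores coercivity in a Hilbert space. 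Existence at each step is then produced by a Leray--Schauder/Galerkin argument in the $w$-variable: the exponential form makes $u^{(\tau)}_{k}>0$ automatic and $\log u^{(\tau)}_{k}=w^{(\tau)}_{k}$ is an admissible test function. Evenness is preserved because the equation, the regularization, and the Coulomb operator $a[\cdot]$ all commute with $x\mapsto -x$.

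\textbf{Discrete a priori estimates.} Testing the time-step equation by $w^{(\tau)}_{k}$ and using the convexity inequality $(e^{b}-e^{a})b\ge e^{b}\log e^{b}-e^{a}\log e^{a}-e^{b}+e^{a}$, I obtain the discrete entropy identity
\[
H(u^{(\tau)}_{k})-H(u^{(\tau)}_{k-1})+4\tau\!\int_{\R^{3}}a[u^{(\tau)}_{k}]\,|\nabla\sqrt{u^{(\tau)}_{k}}|^{2}\,dx
=\tau\!\int_{\R^{3}}(u^{(\tau)}_{k})^{2}\,dx+O(\epsilon),
\]
where $H(u)=\int(u\log u-u+1)\,dx$. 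Mass conservation follows from testing by $1$. The second-moment estimate is obtained by testing with $|x|^{2}$ (suitably truncated); crucially, the \emph{evenness} of $u^{(\tau)}_{k}$ yields by symmetrization $\int u\,x\cdot\nabla a\,dx=-\tfrac12\int ua\,dx$, which turns the evolution of $\int|x|^{2}u\,dx$ into a nonnegative but controllable expression involving $\int ua\,dx=\|\nabla a\|_{L^{2}}^{2}$. From mass $M_{0}$ and second moment $M_{2}(t)$, a direct computation shows $a[u](x)\ge c(M_{0},M_{2}(t))\,\gamma(x)$, so the dissipation controls $\int\gamma|\nabla\sqrt{u}|^{2}dx$. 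To close the system, the bad term $\int u^{2}dx$ is written via $-\Delta a=u$ as $\int\nabla u\cdot\nabla a\,dx=2\int\sqrt u\nabla\sqrt u\cdot\nabla a\,dx$, split by Cauchy--Schwarz and Young into a small fraction of the dissipation $\int a|\nabla\sqrt u|^{2}dx$ plus a term bounded by a Gagliardo--Nirenberg interpolation involving $M_{0}$, the entropy and $\|\nabla a\|_{L^{2}}$, which is in turn fed back into the second-moment inequality. Summing over $k$ and applying a discrete Gronwall argument yields $\tau$-uniform bounds on $H(u^{(\tau)})$, $\int|x|^{2}u^{(\tau)}dx$ and $\iint\gamma|\nabla\sqrt{u^{(\tau)}}|^{2}dx\,dt$.

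\textbf{Compactness and passage to the limit.} From the $\tau$-uniform estimates, elliptic regularity for $-\Delta a=u$ (Hardy--Littlewood--Sobolev plus $L^{1}\cap L\log L$ bounds) gives $a^{(\tau)}\in L^{\infty}(0,T;L^{3}_{loc})$ and $\nabla a^{(\tau)}\in L^{\infty}(0,T;L^{3/2}_{loc})$. Rewriting the scheme as a dual-space estimate, I bound the discrete time derivative of $u^{(\tau)}$ in $L^{r}(0,T;W^{2,r/(r-1)}(\R^{3},\gamma^{-1/3(r-1)}dx)')$ for some $r>1$ (chosen so that the flux $a\nabla u-u\nabla a$ fits in the corresponding space), and in $(L^{\infty}(0,T;W^{1,\infty}_{c}))'$ simply by testing. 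An Aubin--Lions--Simon argument with the weight $\gamma$ then yields strong convergence of $\sqrt{u^{(\tau)}}$ in $L^{2}_{loc}$, hence pointwise a.e.\ convergence of $u^{(\tau)}$, which together with weak convergence of gradients suffices to identify the limit in the nonlinear flux. The limit $u$ is even (pointwise property is preserved), satisfies the desired regularity, and the mass, entropy and second-moment bounds pass to the limit by semicontinuity; mass conservation and the attainment of the initial datum in the dual sense follow from the weak formulation.

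\textbf{Main obstacle.} The technical heart of the argument is closing the entropy/second-moment system, because the formal entropy dissipation contains $\int u^{2}dx$ with the \emph{wrong} sign and this term is not controlled by mass and entropy alone. Everything depends on the interplay between three facts: the evenness-induced cancellation in the second-moment equation, the lower bound $a[u]\gtrsim\gamma$ depending only on $M_{0}$ and $M_{2}$, and a weighted Gagliardo--Nirenberg interpolation that allows absorption of $\int u^{2}dx$ by the weighted Fisher term. A secondary, more technical obstacle is to choose a regularization at each time step that is coercive enough to make the discrete elliptic problem solvable yet whose error vanishes in the entropy estimate when $\epsilon\to 0$ before $\tau\to 0$.
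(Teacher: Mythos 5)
Your scheme (time discretization, exponential substitution $u=e^{w}$, Leray--Schauder at each step, Aubin--Lions, and the dual-space bound on the discrete time derivative) coincides in outline with the paper's construction, but the heart of the proof --- the $\tau$-uniform entropy/energy estimates --- is where your proposal has a genuine gap. You write the entropy balance in the form $D_\tau H + 4\int a|\na\sqrt u|^{2}dx = \int u^{2}dx + \dots$ and propose to ``absorb'' $\int u^{2}dx$ into the dissipation through a weighted Gagliardo--Nirenberg interpolation controlled by mass, entropy, second moment and $\|\na a\|_{L^{2}}$. This step is not substantiated and is precisely the point where a naive argument fails: by the symmetrization identity the difference $4\int a|\na\sqrt u|^{2}dx-\int u^{2}dx$ equals the nonnegative double integral $\frac{1}{8\pi}\iint\frac{u(x)u(y)}{|x-y|}|\na\log u(x)-\na\log u(y)|^{2}dxdy$, so the two terms can cancel exactly and no quantitative weighted Fisher bound survives unless additional structure is exploited. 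An absorption of $\int u^{2}dx$ with room to spare, using only mass, entropy and second moment, would essentially remove the quadratic obstruction for general data and is not available (note $\|\na a\|_{L^{2}}^{2}=\int au\,dx$ is itself only controlled through the entropy dissipation, cf.\ the bound for $\int a^{p}u\,dx$ with $p<2$ in the paper). The paper avoids this circularity by working directly with the symmetrized dissipation: bounding $|x-y|^{-1}\geq(1+|x|)^{-1}(1+|y|)^{-1}$ one gets
\begin{align*}
-\frac{dH}{dt}\;\geq\;\Big(\int\frac{u}{1+|x|}dx\Big)\Big(\int\frac{|\na u|^{2}}{u(1+|x|)}dx\Big)-\Big|\int\frac{\na u}{1+|x|}dx\Big|^{2},
\end{align*}
and the last square vanishes \emph{because $u$ is even}. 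This is the only place where evenness is essential; in your proposal evenness is instead invoked for the identity $2\int u\,x\cdot\na a\,dx=-\int au\,dx$, which in fact holds for arbitrary $u$ by symmetrization, so your argument never uses the hypothesis where it is actually needed and, conversely, lacks the mechanism that produces the bound on $\int_0^T\!\!\int\gamma|\na\sqrt u|^{2}dx\,dt$.

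A second, related omission is the coupling between this dissipation bound and the second moment: the paper's lower bound on the dissipation degenerates like $(1+E^{(\tau)}(t))^{-1/2}$, the upper bound for $a_\tau$ and hence for $D_\tau E^{(\tau)}$ involves the entropy dissipation, and a lower bound for $H$ in terms of $E^{(\tau)}$ is needed because the entropy can tend to $-\infty$; combining these yields a discrete nonlinear inequality for $E^{(\tau)}$ whose solution is bounded on $[0,T]$ (with the choice $p=3/2$). Your ``discrete Gronwall'' sentence hides exactly this loop. Finally, the claim that $a\in L^{\infty}(0,T;L^{3}_{loc})$ and $\na a\in L^{\infty}(0,T;L^{3/2}_{loc})$ follows from ``HLS plus $L^{1}\cap L\log L$'' is under-justified: HLS from $L^{1}$ gives only weak-type bounds, and the upgrade to strong local integrability uniformly in time is a borderline estimate that the paper proves by a Jensen argument with $\xi(s)=(1+s)\log(1+s)$ and $f_{3}(\xi(s))=s^{3}$; this improved regularity is also what allows passing from the ultra-weak formulation (with second derivatives on test functions) to the stated weak formulation, a step your outline skips.
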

The proof is based on a time semi-discretization of (\ref{landau}).
{The discretized problem reads as
$$ \frac{u^{k}-u^{k-1}}{\tau} = \Div(a_{\tau}[u^{k}]\na u^{k} - u^{k}\na a_{\tau}[u^{k}]) - \tau \left( |w^{k}|^{2}w^{k} + \Div( |\na w^{k}|^{2}\na w^{k} )\right) , \quad k\geq 1, $$
where $\tau>0$ is the time-step, $a_{\tau}$ is given by \eqref{a.k}, and $u^{k} = \exp(w^{k})$. 
The new function $w^{k}$ is the so-called {\em entropy variable} \cite{Jue2015,Jue2016}, which relates
the entropy $H[u]=\int_{\R^{3}}u\log u\, dx$ and $u^{k}$ by the relation $w^{k} = \frac{\delta H}{\delta u}[u^{k}]$. In particular $w$ is the Frech\'et derivative of the entropy
functional with respect to $u$. 
The discretization of the problem and the regularizing term of $p-$Laplacian
type (with $p=4$) allow to obtain 
a bounded solution $w^{k}$, which yields in return a bounded and strictly positive function $u^{k}$.  }
At this point, the main difficulty is to obtain estimates for the solution to the semi-discretized problem that are uniform in terms of the time-step.
This is achieved via the mass conservation and the decay in time of the Boltzmann entropy functional. 

Suitable $L^2$ bounds for the gradient of the solution are derived from the entropy inequality. Here is where  the assumption of even initial datum comes to play. In fact, the entropy dissipation can be bounded as 
\begin{align*}
-\frac{dH}{dt} &=\frac{1}{2}\iint_{\R^{3}\times\R^{3}}\frac{u(x,t)u(y,t)}{|x-y|}|\na\log u(x,t) - \na\log u(y,t)|^{2} dx dy\\
&\geq \left(\int_{\R^{3}} \frac{u(x,t)}{1+|x|}dx \right)\left(\int_{\R^{3}}\frac{|\na u(x,t)|^2}{u(x)}\frac{dx}{1+|x|}\right) 
- \left| \int_{\R^{3}} \frac{\na u(x,t)}{1+|x|}dx \right|^2 ,
\end{align*}
and thanks to the assumption that $u$ is even, the last integral on the right-hand side vanishes.
The assumption of even initial data seems at the moment hard to remove. In \cite{Des2015} a lower bound for $-dH/dt$
in terms of the Fisher information was found using conservation of the second moment. In our case
a bound for the second moment is not a-priori given, but has to be proved; on the other hand lower bounds for the 
entropy dissipation are needed to control the energy. It seems therefore that the two problems, deriving a suitable entropy inequality
and proving the boundedness of the energy, are interwined. 
We overcome this problem by first deriving a lower bound for the entropy dissipation which involves the energy, then we bound the energy by means
of the entropy and its dissipation. The combination of these two bounds yields a discrete nonlinear equation for the energy, 
from which we deduce a power-like upper bound for the energy and subsequentially a uniform lower bound for the entropy dissipation.

Another problem in the analysis of \eqref{landau} is the fact that the only steady state of the system is trivial, i.e. it is identically zero.
As a consequence, one expects the entropy $H[u(t)]$ to approach $-\infty$ as $t\to\infty$. However, a lower bound for
the entropy is essential to control the Fisher information. We prove that the entropy $H[u]$
is greater than a (real) constant which depends on the second moment of $u$, thus obtaining the desired control on the Fisher information.

The gradient estimates derived from the entropy inequality and uniform in $\tau$ and will help in the time-continuous limit $\tau\to 0$. We obtain at first an ultra-weak solution to \eqref{landau}. 

Afterwards we perform higher regularity estimates for $a[u]$ and its gradient.
Those estimates are obtained through a novel technique which exploits the uniform boundedness of the entropy
and energy (instead of mass conservation). Subsequent applications of Jensen inequality lead to the estimate
$$ a^3[u] = \Phi(\xi(a[u])) \leq C\int_{|x-y|<\rho}\Phi\left(\frac{1}{|x-y|}\right)\xi(u(y,t))dy,\quad x\in\R^3,~~t\in [0,T], $$
where $R>0$ is arbitrary, $\rho>0$ depends on $R$ and the second moment of $u$, $\xi(s)=(1+s)\log(1+s)$ for $s\geq 0$, and
$\Phi : [0,\infty)\to [0,\infty)$ is such that $\Phi(\xi(s)) = s^{3}$ for $s\geq 0$. We point out that $\xi(u)$ is controlled by the modulus of the entropy
density $u |\log u|$, and therefore the $\xi(u)\in L^{\infty}(0,T; L^{1}(\R^{3}))$. 
On the other hand, by construction $\Phi(z)$ grows at infinity slower than $z^{3}$, which yields $\Phi(|\cdot|^{-1})\in L^{\infty}(0,T; L^{1}_{loc}(\R^{3}))$.
Since the right-hand side of the above estimate for $a^{3}[u]$ is a convolution of two integrable functions, we conclude that 
$a^{3}[u]\in L^{\infty}(0,T; L^{1}_{loc}(\R^{3}))$. In a similar fashion it is shown that $\na a[u]\in L^{\infty}(0,T; L_{loc}^{3/2}(\R^{3}))$.
These improved bounds yield the weak solution to \eqref{landau.w}. 

This last argument can be adapted to the case of the Landau equation, allowing an improved regularity result
for the (ultra-)weak solutions found in \cite[Corollary 1.1]{Des2015}.
\begin{cor}[Corollary of Theorem \ref{thr.ex}]\label{coro.ex}
Let $f=f(v,t)$ be a weak solution to the Landau equation with Coulomb potential as in \cite[Corollary 1.1]{Des2015}. 
Then $A[f]\in L^{\infty}(0,T; L^{3}_{loc}(\R^{3}))$, $\na a[f]\in L^{\infty}(0,T; L^{3/2}_{loc}(\R^{3}))$, and $f$ satisfies 
the following weak formulation 
\begin{align}\label{landau.true.w}
&\int_0^T\langle \pa_{t}f\, , \phi\,\rangle dt + \int_0^T\int_{\R^3}(A[f]\na f - f\na a[f])\cdot\na\phi\, dx dt = 0
\qquad\forall\phi\in L^{\infty}(0,T; W^{1,\infty}_{c}(\R^{3})).
\end{align}
\end{cor}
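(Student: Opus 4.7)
The plan is to transplant the regularity argument sketched in the introduction for $a[u]$ and $\na a[u]$ (for the isotropic equation) directly to the classical Landau setting, and then to use the newly gained integrability to upgrade Desvillettes' ultra-weak formulation \cite[Corollary 1.1]{Des2015} into \eqref{landau.true.w}. The key observation enabling the transplant is that the weak solution $f$ of \cite[Corollary 1.1]{Des2015} has, uniformly on $[0,T]$, finite mass, finite second moment (conserved for the true Landau equation) and finite entropy, which are exactly the three ingredients used in the double-Jensen argument of Theorem~\ref{thr.ex}.

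First, I would reduce the estimate on $A[f]$ to an estimate on $a[f]$. Since the matrix $I-y\otimes y/|y|^{2}$ is an orthogonal projector and $\mathrm{Tr}\,A[f] = a[f]$, one has the pointwise inequality $|A[f](x,t)|\leq C\, a[f](x,t)$, so it suffices to bound $a[f]$ in $L^{\infty}(0,T; L^{3}_{loc}(\R^{3}))$. Fix $R>0$ and choose $\rho=\rho(R,M_{2}[f])$ large enough (using Chebyshev on the second moment) so that for $|x|\leq R$ the integral of $|x-y|^{-1}f(y,t)$ over $\{|x-y|\geq\rho\}$ is controlled by mass and second moment. For the near part $|x-y|<\rho$, normalizing by $c(x)=\int_{|x-y|<\rho}\xi(f(y,t))\,dy$, Jensen's inequality applied to the convex function $\Phi$ with $\Phi(\xi(s))=s^{3}$ yields
\begin{equation*}
a^{3}[f](x,t)\;\leq\; C\int_{|x-y|<\rho}\Phi\!\left(\frac{1}{|x-y|}\right)\xi(f(y,t))\,dy,
\end{equation*}
exactly as in the proof of Theorem~\ref{thr.ex}. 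Since $\xi(f)\in L^{\infty}(0,T; L^{1}(\R^{3}))$ (controlled by $f|\log f|$ and the mass) and $\Phi(|\cdot|^{-1})$ is locally integrable (because $\Phi(z)/z^{3}\to 0$ at infinity, so the singularity at the origin is milder than $|x|^{-3}$), the convolution on the right-hand side is locally integrable in $x$ uniformly in $t$, which yields $a[f]\in L^{\infty}(0,T; L^{3}_{loc}(\R^{3}))$ and hence $A[f]\in L^{\infty}(0,T; L^{3}_{loc}(\R^{3}))$.

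Second, for $\na a[f]$ I would run the same argument with the Riesz representation $\na a[f](x,t)=-(4\pi)^{-1}\int (x-y)|x-y|^{-3}f(y,t)\,dy$. The singular kernel $|x-y|^{-2}$ forces the use of $\Phi$ with $\Phi(\xi(s))=s^{3/2}$; since $\Phi(|\cdot|^{-2})$ is still locally integrable (again because the effective singularity is strictly weaker than $|x|^{-3}$), the same convolution argument gives $\na a[f]\in L^{\infty}(0,T; L^{3/2}_{loc}(\R^{3}))$.

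Third, with these two integrability improvements in hand, I would upgrade the ultra-weak formulation to \eqref{landau.true.w}. The solution of \cite[Corollary 1.1]{Des2015} already satisfies $\sqrt{f}\in L^{2}(0,T; H^{1}_{loc}(\R^{3}))$, so by Sobolev embedding $f\in L^{\infty}(0,T; L^{1})\cap L^{1}(0,T; L^{3}_{loc})$ and $\na f=2\sqrt{f}\na\sqrt{f}\in L^{1}(0,T; L^{3/2}_{loc})$. Therefore, for $\phi\in L^{\infty}(0,T; W^{1,\infty}_{c}(\R^{3}))$, the product $A[f]\na f\cdot\na\phi$ is integrable by Hölder (using $A[f]\in L^{\infty}_{t}L^{3}_{loc,x}$ and $\na f\in L^{1}_{t}L^{3/2}_{loc,x}$), and $f\na a[f]\cdot\na\phi$ is integrable as well (using $f\in L^{1}_{t}L^{3}_{loc,x}$ and $\na a[f]\in L^{\infty}_{t}L^{3/2}_{loc,x}$). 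The passage from the ultra-weak form (where the two derivatives land on $\phi$) to \eqref{landau.true.w} is then achieved by approximating $\phi$ by smooth compactly supported test functions and integrating by parts in the ultra-weak identity; the uniform local integrability just established justifies the limits. The main technical point to check is exactly this last integration-by-parts step on test functions: once the integrals on both sides are defined, the identification follows by density, but one must keep track of the fact that $\na\phi$ is only bounded (not smooth), which is handled by standard mollification of $\phi$ preserving the support.
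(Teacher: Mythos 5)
Your proposal is correct and follows essentially the same route as the paper: the paper's proof of Corollary~\ref{coro.ex} simply repeats the double-Jensen convolution estimate from the ``Weak formulation'' part of Step 3 (with $f_{3}$ for $a[f]$, $f_{3/2}$ for $\na a[f]$, using the pointwise bound $|A[f]|\leq a[f]$ and the mass, second moment and entropy of the Desvillettes solution), and then upgrades the ultra-weak formulation by integration by parts and a density argument, exactly as you describe.
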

We stress the fact that Corollary \ref{coro.ex} holds for general initial data having finite mass, momentum, energy; the hypothesis
of even initial datum is not needed.

The second main result is a conditional regularity estimate for any solution to \eqref{landau} {{found in }} Thr.~\ref{thr.ex}. This result
is based upon the condition that $u$ and $a[u]$ satisfy a so-called {\em $\varepsilon$-Poincar\'e inequality}. 
We say that $u$ and $a[u]$ satisfy the $\varepsilon$-Poincar\'e inequality if given $\varepsilon>0$ as small as one wishes, 
there exists a constant $C_\varepsilon$ such that the following inequality holds true
 \begin{align}\label{eqn:epsilon_Poincare_inequality strongerII}
  \begin{array}{l}
	  \int_{\mathbb{R}^3}  u \phi^2\;dx \leq \varepsilon \int_{\mathbb{R}^3} a[u] |\nabla\phi|^2 \;dx + C_\varepsilon\int_{\mathbb{R}^3}	\phi^2\;dx
  \end{array}	  
\end{align} 
for any $\phi\in L^1_{loc}(\R^3)$ such that the right-hand side of \eqref{eqn:epsilon_Poincare_inequality strongerII} is convergent. The {\em $\varepsilon$-Poincar\'e inequality} was first introduced in \cite{GG17} and used to show regularization of solution to the original Landau equation. 

\begin{theo}[Conditional regularity]\label{thr.reg} Let $u$ be a solution to \eqref{landau} {{found in }} Thr.~\ref{thr.ex}. Assume $u$ is such that (\ref{eqn:epsilon_Poincare_inequality strongerII}) holds true. 
Then there exist constant $C=C(T,u_0,R)$ such that 
\begin{align*}
\|u\|_{L^\infty(B_R\times(t,T))} &\le C(T,u_0,R)\left(\frac{1}{t}+ 1\right)^{\alpha},\qquad t\in (0,T), \quad \alpha > 9/4,
\end{align*}
where $B_R\subset\R^3$ is any ball of radius $R$ and center at zero.
\end{theo}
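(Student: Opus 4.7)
The plan is to apply a parabolic Moser iteration, with the $\varepsilon$-Poincar\'e inequality \eqref{eqn:epsilon_Poincare_inequality strongerII} serving as the key tool to absorb the reaction term $\int\phi^{2}u^{p+1}dx$ that inevitably appears in the $L^{p}$ energy estimates derived from \eqref{landau}.

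Before the iteration I would establish a uniform positive lower bound $a[u](x,t)\geq a_{0}>0$ on any fixed ball $B_{2R}$: from \eqref{a} one has $a[u](x)\geq (4\pi)^{-1}(|x|+R_{0})^{-1}\int_{B_{R_{0}}}u(y,t)\,dy$, and combining the mass conservation provided by Theorem~\ref{thr.ex} with the uniform bound on the second moment via a Chebyshev estimate yields $\int_{B_{R_{0}}}u(y,t)\,dy\geq \tfrac{1}{2}\|u_{0}\|_{L^{1}}$ for $R_{0}$ large enough depending only on $u_{0}$ and $T$. This renders the diffusion uniformly elliptic on $B_{2R}\times[0,T]$.

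The heart of the proof is the family of local $L^{p}$ energy estimates. Pick a smooth cutoff $\phi$ equal to one on $B_{R}$ and supported in $B_{2R}$, and test (after a suitable regularization) the weak form \eqref{landau.w} with $\phi^{2}u^{p-1}$; using $-\Delta a[u]=u$ to rewrite $\int\phi^{2}u^{p-1}\nabla a\cdot\nabla u\,dx$ leads to
\begin{align*}
\frac{1}{p}\frac{d}{dt}\int\phi^{2}u^{p}\,dx + (p-1)\int\phi^{2}\,a[u]\,u^{p-2}|\nabla u|^{2}\,dx \leq \tfrac{p-1}{p}\int\phi^{2}u^{p+1}\,dx + \mathcal{E}_{p}(\phi),
\end{align*}
where $\mathcal{E}_{p}(\phi)$ gathers cutoff cross terms controlled by $\|\nabla\phi\|_{\infty}$ and by $L^{q}$ norms of $u$ on $B_{2R}$. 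Applying \eqref{eqn:epsilon_Poincare_inequality strongerII} with test function $\phi u^{p/2}$ gives
\begin{align*}
\int\phi^{2}u^{p+1}\,dx \leq \varepsilon\int a[u]\,|\nabla(\phi u^{p/2})|^{2}\,dx + C_{\varepsilon}\int\phi^{2}u^{p}\,dx,
\end{align*}
and the pointwise identity $|\nabla(\phi u^{p/2})|^{2}\leq 2u^{p}|\nabla\phi|^{2}+\tfrac{p^{2}}{2}\phi^{2}u^{p-2}|\nabla u|^{2}$ allows the reaction term to be absorbed into the dissipation by choosing $\varepsilon\sim 1/p$. Combining the residual dissipation with $a[u]\geq a_{0}$ on $B_{2R}$ and the three-dimensional Sobolev embedding $H^{1}\hookrightarrow L^{6}$, together with the usual interpolation in time, produces a reverse H\"older gain of the form
\begin{align*}
\int_{t_{1}}^{t_{2}}\!\!\int_{B_{R}}u^{qp}\,dx\,dt \leq C(p,R)\Bigl(\sup_{t_{1}\leq t\leq t_{2}}\int_{B_{2R}}u^{p}\,dx+\int_{t_{1}}^{t_{2}}\!\!\int_{B_{2R}}u^{p}\,dx\,dt\Bigr)^{q},
\end{align*}
for some $q>1$ depending only on $d=3$.

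I would then run the classical parabolic Moser iteration along sequences $p_{n}=q^{n}p_{0}$, $R_{n}=R(1+2^{-n})$, $t_{n}=t(1-2^{-n})$, started from some $p_{0}>1$ for which the initial bound is available via mass conservation, the entropy bound, and the dissipation $\sqrt{u}\in L^{2}(0,T;H^{1}(\gamma dx))$ from Theorem~\ref{thr.ex}. Using time cutoffs $\eta_{n}$ with $\|\eta_{n}'\|_{\infty}\lesssim 2^{n}/t$, summing the resulting geometric series of $(1/t)$-weights produces an $L^{\infty}$ bound on $B_{R}\times(t,T)$ with prefactor $(1+1/t)^{\alpha}$, and a direct bookkeeping of the exponents $q^{-n}$ and the factor $2^{n}/t$ at each step shows that any $\alpha>9/4$ suffices. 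I expect the main obstacle to be twofold: first, tracking the $p$-dependence of $C_{\varepsilon}$ when $\varepsilon=\varepsilon(p)\sim 1/p$, so that the iteration constants remain summable and produce a finite $\alpha$; and second, rigorously justifying the use of $\phi^{2}u^{p-1}$ as a test function in the weak class of Theorem~\ref{thr.ex}, which will likely require truncations $u_{M}=\min\{u,M\}$ followed by passage to the limit $M\to\infty$ using the integrability available there.
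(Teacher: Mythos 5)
Your overall strategy (local $L^p$ energy estimates, absorption of $\int \phi^2 u^{p+1}$ via the $\varepsilon$-Poincar\'e inequality with $\phi=\eta u^{p/2}$, then Moser iteration with time cutoffs) is the same as the paper's, and your lower bound $a[u]\geq a_0$ on bounded sets from mass conservation and the second moment is exactly the paper's estimate \eqref{a_tau_bound_below}. But there is a genuine gap at the very point you defer: the justification of $\psi=\phi^2u^{p-1}$ as a test function. In the class of Theorem \ref{thr.ex} one only knows $\sqrt{u}\in L^2(0,T;H^1(\gamma dx))$, $u\in L^\infty_tL^1\cap L^{5/3}_{loc}$, and $\partial_t u$ in the dual of a \emph{second-order} weighted Sobolev space, with admissible test functions only in $L^\infty(0,T;W^{1,\infty}_c)$. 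A truncation $u_M=\min\{u,M\}$ does not rescue this: the chain rule $\langle\partial_t u,\phi^2 u_M^{p-1}\rangle=\frac{d}{dt}\int\Phi_M(u)\phi^2dx$ requires a duality pairing (e.g.\ $u\in L^2_tH^1_{loc}$ with $\partial_t u$ in its dual) that is simply not available here, and $u^{p-1}$, even truncated, is not an admissible test function for \eqref{landau.w}. The paper avoids this entirely by running the whole Moser scheme on the time-discretized problem \eqref{landau.tau}, where $u^{(\tau)}=e^{w^k}$ with $w^k\in W^{1,4}(\Bt)\hookrightarrow L^\infty$, so arbitrary powers are legitimate test functions; it then must (and does) handle the extra $\tau$-regularization terms (the $|w|^2w$ and $4$-Laplacian terms, producing the $C\tau$ remainders in Proposition \ref{prop:energy_identity}), and only at the end passes $\tau\to0$ in the final iterated estimate by Fatou and the a.e.\ convergences \eqref{utau.aecnv}, \eqref{conv_punt_a}. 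Without this (or a genuinely worked-out substitute), your proof does not close.

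Two further points are glossed over. First, your cutoff error terms $\mathcal{E}_p(\phi)$ are not controlled by $\|\nabla\phi\|_\infty$ and local $L^q$ norms of $u$ alone: they contain $a$ and $\nabla a$, which are only in $L^\infty_tL^3_{loc}$ and $L^\infty_tL^{3/2}_{loc}$, not bounded. The paper integrates the $\nabla a$ term by parts using $-\Delta a=u$ and keeps the weight $a$ \emph{inside} the iterated quantities, using the two-weight ($\mathcal{A}_1$) Sobolev inequality of Lemma \ref{lem:Inequalities Sobolev weight aII} with gain $q/2<5/3$ in the measure $a\,dx\,dt$; your unweighted reverse H\"older step would instead have to pay a H\"older factor $\|a\|_{L^3}$ against $\|u^p\|_{L^{3/2}}$, a higher power that needs an extra interpolation you have not supplied. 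Second, the exponent $\alpha>9/4$ is not generic Moser bookkeeping: in the paper it comes out of the specific weighted scheme, with starting exponent $p\le 10/9$ chosen so that $E_0=\bigl(\int a[u^{(\tau)}]\,(u^{(\tau)})^{p}\bigr)^{1/p}$ is finite via $a\in L^\infty_tL^3_{loc}$ and the bound \eqref{June20.u53}, and the sums $\frac1p\sum_j j(2/q)^j$, $\frac1p\sum_j(2/q)^j$ with $q<10/3$; asserting that your unweighted iteration yields the same power without that computation is not a proof.
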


Weighted Poincare's inequalities have been used to obtain informations about eigenvalues for the Schr\"odinger operators \cite{CWW, CW1, CW2, FP78, SW}. Let $L := -\Delta -w$, $w\ge 0$,  we say that $L$ is positive definite if an inequality of the form 
$$
\int w\phi^2 \;dx \le \int |\nabla \phi|^2 \;dx 
$$
holds true for any $\phi$. The above inequality is called the {\em uncertainty principle} \cite{F83}. 
Similarly one can look into eigenvalues of degenerate elliptic operators of the form  $L = -\textrm{div}(w_1 \nabla )-w_2$ with $w_1,w_2$ positive functions. 
In this case the uncertainty principle becomes a two-weight Poincare's inequality 
\begin{align*}
\int w_2\phi^2 \;dx \le \int w_1 |\nabla \phi|^2 \;dx .
\end{align*}
Therefore one can ask the question for which weights $\omega_1$ and $\omega_2$, inequalities of the form 
\begin{align}\label{weightedPS}
\int_Q \omega_1\phi^2 \;dx \le C \int_Q \omega_2 |\nabla \phi|^2 \;dx 
\end{align}
exist, for any function $\phi$ with either zero mean or compact support in $Q$. Several conditions that guarantee validity of (\ref{weightedPS}) have been given in the literature \cite{CW1, CW2, SW}. The most general one reads as 
$$
 |Q|^{\frac{2}{d}}\left (\frac{1}{|Q|}\int_{Q}\omega_1^r\;dv \right )^{\frac{1}{r}}\left (\frac{1}{|Q|}\int_{Q}\omega_2^{-r}\;dv \right )^{\frac{1}{r}}\le C
$$
for some $r>1$. The constant $C$ above appears on the right hand side of (\ref{weightedPS}).

Inspired by the similarity of (\ref{landau}) with the degenerate operator $L = -\textrm{div}(a[u] \nabla )-u$ the first author and collaborator proposed in \cite{GG17} the new inequality (\ref{eqn:epsilon_Poincare_inequality strongerII}). 

We outline the basic steps on how to get (\ref{eqn:epsilon_Poincare_inequality strongerII}) and refer to \cite{GG17} for more details. The main idea is the following: fix $\varepsilon>0$ and assume that there exists $R>0$ such that for each cube of length $R$ we have 
\begin{align}\label{small_P}
 |Q_R|^{\frac{2}{3}}\left (\frac{1}{|Q_R|}\int_{Q_R}u^r\;dx \right )^{\frac{1}{r}}\left (\frac{1}{|Q_R|}\int_{Q_R}a[u]^{-r}\;dx \right )^{\frac{1}{r}} \le  \varepsilon.
\end{align}
Then \eqref{small_P} and (\ref{weightedPS}) with $\omega_2 = a[u]$ and $\omega_1 = u$ imply:
$$
\int_{Q_R} u(\phi-(\phi)_{Q_R})^2 \;dx \le \varepsilon \int_{Q_R} a[u] |\nabla \phi|^2 \;dx,
$$
where $(\phi)_{Q_R}$ denotes the average of $\phi$ in the cube $Q_R$. A covering of $\R^3$ with cubes of size $R$ then yields (\ref{eqn:epsilon_Poincare_inequality strongerII}). {In the specific case of (\ref{landau}) there is another condition leading to (\ref{eqn:epsilon_Poincare_inequality strongerII}) that can replace (\ref{small_P}). Such condition involves a weighted estimate on $\nabla a[u]$ and the proof is summarized in Lemma~\ref{lem.cnd.epsPoi} in the Appendix. The main idea is the following: assume that for some $q>3$ 
\begin{align}
\gamma^{-1}\na a[u]\in L^{\infty}_{loc}(0,\infty; L^{q}(\R^{3})), \label{cnd.epsPoi}
\end{align}
then for each $\varepsilon >0$ (\ref{eqn:epsilon_Poincare_inequality strongerII}) holds.  Then we rewrite $ \int_{\mathbb{R}^3}  u \phi^2\;dx$  using the Poisson's equation $-\Delta a[u] = u$ and 
integrate by parts:
\begin{align*}
& \int_{\R^{3}}u\phi^{2}dx = -\int_{\R^{3}}\phi^{2}\Delta a \;dx = 2\int_{\R^{3}}\phi\na a\cdot\na \phi \; dx.
\end{align*}
Then inequality \eqref{eqn:epsilon_Poincare_inequality strongerII} follows from assumption \eqref{cnd.epsPoi} by means of 
a Gagliardo-Nirenberg inequality and the fact that $a[u](x)\geq C\gamma(x)$ for $x\in\R^{3}$, with $C$ being a positive
constant that depends only on the mass and second moment of $u$ (see Lemma~\ref{lem.cnd.epsPoi} in Appendix for all the details).} 

The fact that neither (\ref{small_P}) not (\ref{cnd.epsPoi}) can be proven at the moment is rather unsatisfactory. And consequently the results stated in Theorem \ref{thr.reg} should be viewed as conditional. However one can show that condition (\ref{small_P}) {\em nearly holds}: in fact it is easy to prove (see Proposition $2.14$ in \cite{GG17}) that there exists a constant $C$ only dependent on mass, first and second moment of $u$ such that
\begin{align*}
 |Q_R|^{\frac{2}{3}}\left (\frac{1}{|Q_R|}\int_{Q_R}u\;dx \right )\left (\frac{1}{|Q_R|}\int_{Q_R}a[u]\;dx \right )^{-1} \le C.
\end{align*}

The reason why the $\varepsilon$-Poincare's inequality (\ref{eqn:epsilon_Poincare_inequality strongerII}) helps controlling the quadratic nonlinearity in (\ref{landau}) is now easy to see: formal energy estimates yield
$$
\partial_t \int u^{p+1} \;dx  = -\frac{4p}{p+1} \int a[u] | \nabla u^{\frac{p+1}{2}} |^2 \;dx + p\int u^{p+2}\;dx.
$$
Inequality (\ref{eqn:epsilon_Poincare_inequality strongerII}) with $\phi = u^{\frac{p+1}{2}}$ and $\varepsilon \le \frac{3p}{p+1}$ implies 
$$
\partial_t \int u^{p+1} \;dx  +\frac{p}{p+1} \int a[u] | \nabla u^{\frac{p+1}{2}} |^2 \;dx \le  C(p)\int u^{p+1}\;dx.
$$
Theorem \ref{thr.reg} will be proven using a modification of this energy estimate and Moser's iteration. \\

The rest of the manuscript is organized as follows: Theorem ~\ref{thr.ex} and Corollary~\ref{coro.ex} are proven in Section \ref{section_theorem1} and  Theorem \ref{thr.reg} in Section \ref{section_theorem2}.


%
\section{Proof of Theorem \ref{thr.ex}.}\label{section_theorem1}
The proof is divided into several steps.\medskip\\
{\em Step 1: Construction of approximate solutions.}
Let $T>0$, $N\in\N$, $\tau=T/N$,
$\Bt\equiv\{x\in\R^{d}~:~ |x|<\tau^{-\alpha}\}$ for some $\alpha\in (0,1)$ to be specified later,
$w^{0} \equiv \log u_{0}$.

Let us consider the following time-discretized and regularized problem: for any $k = 1,\ldots,N$,
\begin{align}\label{landau.k}
& \mbox{find }w^{k}\in W^{1,4}(\Bt)\mbox{ such that: }\\
\nonumber
& \tau^{-1}\int_{\Bt}( \exp(w^{k}) - \exp(w^{k-1}) )\phi dx + \tau\int_{\Bt}( |w^{k}|^2 w^k\phi + |\na w^{k}|^2 \na w^k\cdot\na\phi )dx\\
\nonumber
&\qquad + \int_{\Bt}( a_{\tau}[\exp(w^{k})]\na\exp(w^{k}) - \exp(w^{k})\na a_{\tau}[\exp(w^{k})] )\cdot\na\phi dx = 0,\\
\nonumber
&\qquad\mbox{for all }\phi\in W^{1,4}(\Bt),
\end{align}
where the functional $a_{\tau}$ is defined as
\begin{align}
& a_{\tau}[u_{\tau}](x) = \int_{\Bt}\frac{u_{\tau}(y)}{{4\pi} |x-y|}dy\quad x\in\R^{3},\qquad \mbox{for any $u_{\tau}$ smooth enough.}
\label{a.k}
\end{align}
We solve \eqref{landau.k}, \eqref{a.k} by means of Leray-Schauder fixed point theorem. 
For given $z\in L^{\infty}(\Bt)$ and $\sigma\in [0,1]$ we first consider the following problem:
\begin{align}\label{landau.lin}
& \mbox{find }w\in W^{1,4}(\Bt)~\mbox{ such that }\quad A(w) = f,
\end{align}
where $A : W^{1,4}(\Bt)\to (W^{1,4}(\Bt))'$, $f \in (W^{1,4}(\Bt))'$ are defined as follows:
\begin{align*}
\langle A(w), \phi\rangle :=&\; \tau\int_\Bt (w^3\phi + |\na w|^2\na w\cdot\na\phi)dx + \int_\Bt a_\tau[\exp(z)]\exp(z)\na w\cdot\na\phi dx,\\
\langle f,\phi\rangle :=& -\sigma\tau^{-1}\int_\Bt( \exp(z) - \exp(w^{k-1}) )\phi dx + \sigma\int_\Bt\exp(z)\na a_{\tau}[\exp(z)] \cdot\na\phi dx,
\end{align*}
for all $\phi\in W^{1,4}(\Bt)$.

Let us verify that $f\in (W^{1,4}(\Bt))'$. Since for any $u\in L^{\infty}(\Bt)$, $x\in\Bt$ it holds
$$ |a_{\tau}[u](x)| + |\na a_{\tau}[u](x)|\leq C_{\tau}\int_{\Bt}\frac{|u(y)|}{|x-y|^{2}} dy\leq C_{\tau}\|u\|_{L^{\infty}(\Bt)}, $$
it follows that, for any $\phi\in W^{1,4}(\Bt)$,
\begin{align}\label{June29.bound.f}
|\langle f, \phi\rangle| \leq C(\tau,\|z\|_{L^\infty(\Bt)},w^{k-1})\|\phi\|_{W^{1,1}(\Bt)}\leq 
C(\tau,\|z\|_{L^\infty(\Bt)},w^{k-1})\|\phi\|_{W^{1,4}(\Bt)}.
\end{align}
Let us now show that $A$ is strictly monotone, coercive and semicontinuous. Given $w_1, w_2\in W^{1,4}(\Bt)$, let us consider:
\begin{align*}
&\langle A(w_1) - A(w_2), w_1 - w_2\rangle \\
&= \tau\int_\Bt ( (w_1^3 - w_2^3) (w_1 - w_2) + ( |\na w_1|^2\na w_1 - |\na w_2|^2\na w_2 ) \cdot\na(w_1-w_2) )dx\\
&\qquad + \int_\Bt a_\tau[\exp(z)]\exp(z)|\na(w_1-w_2)|^2 dx\\
&=\tau\int_\Bt ( w_1^2 + w_1 w_2 + w_2^2 )(w_1-w_2)^2 dx \\
&\qquad +\frac{\tau}{2}\int_\Bt (|\na w_1|^2 - |\na w_2|^2)\na(w_1+w_2)\cdot\na(w_1-w_2)dx\\
&\qquad +\frac{\tau}{2}\int_\Bt (|\na w_1|^2 + |\na w_2|^2)\na(w_1-w_2)\cdot\na(w_1-w_2)dx\\
&\qquad + \int_\Bt a_\tau[\exp(z)]\exp(z)|\na(w_1-w_2)|^2 dx\\
&=\tau\int_\Bt ( w_1^2 + w_1 w_2 + w_2^2 )(w_1-w_2)^2 dx + \frac{\tau}{2}\int_\Bt (|\na w_1|^2 - |\na w_2|^2)^2 dx\\
&\qquad +\frac{\tau}{2}\int_\Bt (|\na w_1|^2 + |\na w_2|^2)|\na(w_1-w_2)|^2 dx\\
&\qquad + \int_\Bt a_\tau[\exp(z)]\exp(z)|\na(w_1-w_2)|^2 dx \geq 0.
\end{align*}
This means that $A$ is monotone. Let us now assume that $\langle A(w_1) - A(w_2), w_1 - w_2\rangle = 0$. From the above computation it follows that
$$ \int_\Bt ( w_1^2 + w_1 w_2 + w_2^2 )(w_1-w_2)^2 dx = 0 $$
which implies that, for a.e. $x\in\Bt$, either $w_1(x)^2 + w_1(x) w_2(x) + w_2(x)^2 = 0$ or $(w_1(x)-w_2(x))^2=0$; in both cases $w_1(x)=w_2(x)$.
Therefore $A$ is strictly monotone.

Let us now consider, for a generic $w\in W^{1,4}(\Bt)$,
\begin{align}\label{June29.lb.A}
\langle A(w),w\rangle = \tau\|w\|_{W^{1,4}(\Bt)}^4 + \int_\Bt a_\tau[\exp(z)]\exp(z)|\na w|^2 dx . 
\end{align}
Clearly $ \|w\|_{W^{1,4}(\Bt)}^{-1} \langle A(w),w\rangle \to \infty$ as $\|w\|_{W^{1,4}(\Bt)}\to \infty$, i.e. $A$ is coercive.

It is straightforward to verify that, for any $w_1, w_2, w_3\in W^{1,4}(\Bt)$, the function 
$t\in [0,1]\mapsto \langle A(w_1+t w_2), w_3 \rangle $ is continuous. So $A$ is hemicontinuous.

From \cite[Thr.~26A]{Zei90} we deduce that \eqref{landau.lin} has a unique solution $w\in W^{1,4}(\Bt)$. 
Moreover from \eqref{June29.bound.f}, \eqref{June29.lb.A} it follows that
\begin{align}\label{June29.b.w}
\|w\|_{W^{1,4}(\Bt)}\leq C(\tau,\|z\|_{L^\infty(\Bt)},w^{k-1}).
\end{align}
Therefore we can define the operator 
$$ \fun{F}{L^{\infty}(\Bt)\times [0,1]}{L^{\infty}(\Bt)}{(z,\sigma)}{w}  $$
with $w\in W^{1,4}(\Bt)$ the unique solution to \eqref{landau.lin}.

We observe that $F(\cdot,0)\equiv 0$ (trivial). 
Additionally (\ref{June29.b.w}) implies that $F$ is compact, since F is bounded as an operator $ L^{\infty}(\Bt)\times [0,1]\to W^{1,4}(\Bt)$. 
We remind that the Sobolev embedding $W^{1,4}(\Bt)\hookrightarrow L^{\infty}(\Bt)$ is compact.
Standard arguments can be employed to prove that $F$ is also continuous.
Let us now consider $w\in L^{\infty}(\Bt)$ such that $F(w,\sigma)=w$ for some $\sigma$. Choosing $\phi=z=w$ in \eqref{landau.lin} leads to
\begin{align*}
& \sigma\tau^{-1}\int_{\Bt}( \exp(w) - \exp(w^{k-1}) )w dx + \tau \|w\|_{W^{1,4}(\Bt)}^{4} \\
&= - \sigma\int_{\Bt}( a_{\tau}[\exp(w)]\na\exp(w) - \exp(w)\na a_{\tau}[\exp(w)] )\cdot\na w dx \\
&= - \sigma\iint_{\Bt\times\Bt}\frac{e^{w(x)+w(y)}}{{4\pi} |x-y|}(\na w(x) - \na w(y))\cdot\na w(x) dx dy\\
&= - \frac{\sigma}{2}\iint_{\Bt\times\Bt}\frac{e^{w(x)+w(y)}}{{4\pi} |x-y|}|\na w(x) - \na w(y)|^{2} dx dy \leq 0.
\end{align*}
Moreover, since $u\in (0,\infty)\mapsto u\log u - u\in\R$ is convex, it follows that 
$$ (u_{1}\log u_{1} - u_{1}) - (u_{2}\log u_{2} - u_{2}) \leq \log(u_{1}) (u_{1}-u_{2})\qquad u_{1}, u_{2}>0. $$
Evaluating the above inequality for $u_{1}=\exp(w)$, $u_{2}=\exp(w^{k-1})$, it follows
$$ e^{w}(w-1) - e^{w^{k-1}}(w^{k-1}-1) \leq w (e^{w}-e^{w^{k-1}}). $$
Summarizing up
\begin{align}\label{landau.entr.fp}  
& \sigma\int_{\Bt}e^{w}(w-1) dx + \tau^{2}\|w\|_{W^{1,4}(\Bt)}^{4}\\ 
& + \frac{\tau\sigma}{2}\iint_{\Bt\times\Bt}\frac{e^{w(x)+w(y)}}{{4\pi} |x-y|}|\na w(x) - \na w(y)|^{2} dx dy
\leq \sigma\int_{\Bt}e^{w^{k-1}}(w^{k-1}-1) dx .\nonumber
\end{align}
{We point out that the first integral on the left-hand side of \eqref{landau.entr.fp} can be bound from below by a constant since
$e^s (s-1) + 1\geq 0$ for all $s\geq 0$ and $\Bt$ is bounded.} Therefore \eqref{landau.entr.fp}
provides us with a uniform (w.r.t. $\sigma\in [0,1]$) bound for $w$ in $W^{1,4}(\Bt)$, and therefore also in
$L^{\infty}(\Bt)$. Leray-Schauder's fixed point Theorem implies that $w^{k}\in L^{\infty}(\Bt)$ exists such that
$F(w^{k},1)=w^{k}$, i.e. $w^{k}\in W^{1,4}(\Bt)$ is a solution to \eqref{landau.k}, \eqref{a.k}. Furthermore, $w^{k}$ satisfies \eqref{landau.entr.fp}
with $\sigma=1$, i.e.
\begin{align}\label{landau.entr.k}
& \int_{\Bt} e^{w^{k}}(w^{k}-1) dx + \tau^{2}\|w^{k}\|_{W^{1,4}(\Bt)}^{4}\\ 
& + \frac{\tau}{2}\iint_{\Bt\times\Bt}\frac{e^{w^{k}(x)+w^{k}(y)}}{{4\pi} |x-y|}|\na w^{k}(x) - \na w^{k}(y)|^{2} dx dy
\leq \int_{\Bt}e^{w^{k-1}}(w^{k-1}-1) dx .\nonumber
\end{align}
We define now piecewise-constant in time functions which interpolate the sequences $e^{w^{k}}$, $w^{k}$. 
For $x\in\Bt$, $0\leq t\leq T$, let
\begin{align*}
\utau(x,t) &= u_{0}(x)\chi_{\{t=0\}} + \sum_{k=1}^{N}e^{w^{k}(x)}\chi_{t\in ((k-1)\tau,k\tau]},\\
\wtau(x,t) &= \log(u_{0}(x))\chi_{\{t=0\}} + \sum_{k=1}^{N}w^{k}(x)\chi_{t\in ((k-1)\tau,k\tau]}.
\end{align*}
We also define the discrete time derivative operator $D_{\tau}$ as
\begin{equation}
D_{\tau}f(t) = \tau^{-1}(f(t)-f(t-\tau)), \quad\tau\leq t\leq T,\quad\mbox{for any }f : [0,T]\to\R.\label{Dtau.def}
\end{equation}
With this new notation, \eqref{landau.k} can be rewritten as
\begin{align*}
& \int_{\Bt}(D_{\tau}\utau) \phi dx 
+ \tau\int_\Bt ( |\wtau|^2\wtau\phi + |\na\wtau|^2\na\wtau\cdot\na\phi )dx\\
&\qquad + \int_{\Bt}( a_{\tau}[\utau]\na\utau - \utau\na a_{\tau}[\utau])\cdot\na\phi dx = 0;
\end{align*}
integrating the above equality in the time interval $[0,T]$ yields
\begin{align}\label{landau.tau}
& \int_{0}^{T}\int_{\Bt}(D_{\tau}\utau) \phi dx dt 
+ \tau\int_0^T\int_\Bt ( |\wtau|^2\wtau\phi + |\na\wtau|^2\na\wtau\cdot\na\phi )dx dt\\
\nonumber
& + \int_{0}^{T}\int_{\Bt}( a_{\tau}[\utau]\na\utau - \utau\na a_{\tau}[\utau])\cdot\na\phi dx dt = 0,\quad
\phi\in L^{4}(0,T; W^{1,4}(\Bt)).
\end{align}
We point out that, from the previous computations,
\eqref{landau.tau} holds true for piecewise constant in time test functions $\phi$, but by a density argument we deduce that
the equation is fulfilled for any $\phi\in L^{4}(0,T; W^{1,4}(\Bt))$.

Ineq.~\eqref{landau.entr.k} can be rewritten in the new notation as
\begin{align}\label{landau.entr.tau}
& D_\tau H^{(\tau)}[\utau(t)] + \tau \|\wtau(t)\|_{W^{1,4}(\Bt)}^{4}\\ 
\nonumber
&\qquad + \frac{1}{2}\iint_{\Bt\times\Bt}\frac{\utau(x,t)\utau(y,t)}{|x-y|}|\na\wtau(x,t) - \na\wtau(y,t)|^{2} dx dy\leq 0,\\
& H^{(\tau)}[\utau] = \int_{\Bt} \utau(\log\utau - 1) dx .\nonumber
\end{align}

{\em Step 2: A-priori and uniform in $\tau$ estimates.} Here we derive some a-priori estimates, that will be employed to extract a convergent subsequence from $\utau$
and take the limit $\tau\to 0$ in \eqref{landau.tau}. We define for later convenience the following quantities:
\begin{align*}
\mtau(t) & =\int_{\Bt} \utau(x,t)dx,\qquad \Etau(t) = \int_{\Bt}|x|^2\utau(x,t) dx,\\
\Rtau(t) &= \min\left\{ \sqrt\frac{2\Etau(t)}{\mtau(t)} , \tau^{-\alpha}\right\} ,\qquad 0\leq t\leq T.
\end{align*}

{\em Uniform boundedness and positivity of the mass.}
We first observe that the mass $\mtau(t)$ is uniformely bounded and positive for $0\leq t\leq T$, $\tau>0$: let $t'\in [0,T]$ and choose $\phi(x,t) = \chi_{[0,t']}(t)$ in \eqref{landau.tau}.  
From (\ref{landau.entr.tau}) and H\umlaut{o}lder inequality it follows
\begin{align}\label{mass.b}
&  |\mtau(t') - \mtau(0)| \leq \tau\int_0^{t'}\int_{\Bt}|\wtau|^3 dx dt\leq C(T)(\tau |\Bt|)^{1/4} = C(T)\tau^{(1-\alpha)/4},\\
& \qquad 0\leq t'\leq T.\nonumber
\end{align}
Since $0<\alpha<1$ and $\mtau(0) = \int_{\Bt}u_0 dx$ is uniformely positive and uniformely bounded
w.r.t. $\tau>0$, it follows that positive constants $c_1$, $c_2$ exist such that
\begin{align}
c_1\leq\mtau(t)\leq c_2\qquad 0\leq t\leq T,~~\tau>0.\label{a_tau_ba_bb}
\end{align}

{\em Preliminary gradient estimate.}
We first find a lower bound for $\int_{|x|<R(t)}\utau(x,t)dx$, for $0\leq t\leq T$.
If $\Rtau(t)=\tau^{-\alpha}$ then $\int_{|x|<R(t)}\utau(x,t)dx = \mtau(t)\geq \frac{1}{2}\mtau(t)$.
On the other hand, if $\Rtau(t)<\tau^{-\alpha}$, then $\Rtau(t) = \sqrt{2\Etau(t)/\mtau(t)}$ and it holds
\begin{align*}
\int_{|x|<\Rtau(t)}\utau(x,t)dx &= \mtau(t) - \int_{\Rtau(t)<|x|<\tau^{-\alpha}}\utau(x,t)dx\\
\nonumber
&\geq \mtau(t) - \frac{1}{\Rtau(t)^2}\int_{\Rtau(t)<|x|<\tau^{-\alpha}}|x|^2\utau(x,t)dx\\
\nonumber
&\geq \mtau(t) - \frac{\Etau(t)}{\Rtau(t)^2} = \frac{1}{2}\mtau(t).
\end{align*}
Therefore, in any case,
\begin{align}
& \int_{|x|<\Rtau(t)}\utau(x,t)dx \geq \frac{1}{2}\mtau(t),\qquad t\in [0,T].
\label{lb.intBRu}
\end{align}

Let us consider the third term on the left-hand side of \eqref{landau.entr.tau}.
Since $|x-y|\leq |x| + |y| \leq (1+|x|)(1+|y|)$ for $x,y\in\R^3$, it follows
\begin{align*}
&\frac{1}{2}\iint_{\Bt\times\Bt}\frac{\utau(x,t)\utau(y,t)}{|x-y|}|\na\wtau(x,t) - \na\wtau(y,t)|^{2} dx dy\\
&\geq\frac{1}{2}\iint_{\Bt\times\Bt}\frac{\utau(x,t)\utau(y,t)}{(1+|x|)(1+|y|)}\left| \frac{\na \utau(x,t)}{\utau(x,t)} - \frac{\na \utau(y,t)}{\utau(y,t)} \right|^2 dx dy\\
&= \left(\int_{\Bt} \frac{\utau(x,t)}{1+|x|}dx \right)\left(\int_{\Bt}\frac{|\na \utau(x,t)|^2}{\utau(x)}\frac{dx}{1+|x|}\right) 
- \left| \int_{\Bt} \frac{\na \utau(x,t)}{1+|x|}dx \right|^2 .
\end{align*}
The assumption that $u_{0}$ is even implies that $\utau(\cdot,t)$ is even for $t>0$, and therefore $\pa_{x_{i}}\utau(\cdot,t)$ is odd for $t>0$, $i=1,2,3$.
In particular,
$$ \left| \int_{\Bt} \frac{\na \utau}{1+|x|}dx \right|^2 = \sum_{i=1}^3\left(\int_{\Bt}\frac{\pa \utau}{\pa x_i} \frac{dx}{1+|x|}\right)^2 = 0. $$
As a consequence
\begin{align*}
&\frac{1}{2}\iint_{\Bt\times\Bt}\frac{\utau(x,t)\utau(y,t)}{(1+|x|)(1+|y|)}\left| \frac{\na \utau(x,t)}{u(x,t)} - \frac{\na \utau(y,t)}{\utau(y,t)} \right|^2 dx dy\\
 &\geq \left(\int_{\Bt} \utau(x,t)\frac{dx}{1+|x|} \right)\left(\int_{\Bt}\frac{|\na\utau(x,t)|^2}{\utau(x)}\frac{dx}{1+|x|}\right) .
\end{align*}
We now wish to show a positive lower bound for $\int_{\Bt} \utau(x,t)\frac{dx}{1+|x|}$ for $0\leq t\leq T$. 
Remember that $\Rtau(t) = \min\{ (2\Etau(t)/\mtau(t))^{1/2} , \tau^{-\alpha}\}$. It holds
$$ \int_{\Bt}\frac{\utau(x,t)}{1+|x|}dx\geq 
\int_{|x|<\Rtau(t)}\frac{\utau(x,t)}{1+|x|} dx
\geq \frac{1}{1+R(t)}\int_{|x|<\Rtau(t)}\utau(x,t)dx . $$
From \eqref{lb.intBRu} and the fact that $\mtau(t)$ is uniformely positive and bounded it follows
\begin{align}\label{low.1}
\IRd \utau(x,t)\frac{dx}{1+|x|}\geq \frac{C}{(1+\Etau(t))^{1/2}}.
\end{align}
Therefore
\begin{align*}
&\frac{1}{2}\iint_{\Bt\times\Bt}\frac{\utau(x,t)\utau(y,t)}{(1+|x|)(1+|y|)}\left| \frac{\na \utau(x,t)}{u(x,t)} - \frac{\na \utau(y,t)}{\utau(y,t)} \right|^2 dx dy\\
 &\qquad\geq \frac{C}{(1+\Etau(t))^{1/2}}\int_{\Bt}\frac{|\na\utau(x,t)|^2}{\utau(x)}\frac{dx}{1+|x|},
\end{align*}
and the discrete entropy inequality leads to
\begin{align}\label{dHdt.est}
& D_\tau H^\tau[\utau(t)] + \tau\|\wtau\|_{W^{1,4}(\Bt)}^{4}
+ C\int_{\Bt}\frac{\left|\na\sqrt{\utau(x,t)}\right|^2}{(1+|x|)(1+\Etau(t))^{1/2}} dx \leq 0.
\end{align}

{\em Upper bound for $a$.} It holds
\begin{align}
&{4\pi} a_\tau[\utau](x,t)\\ 
\nonumber
&= \int_{|y|<\tau^{-\alpha},\, |x-y|<1}\frac{\utau(y,t)}{|x-y|}dy 
+ \int_{|y|<\tau^{-\alpha},\, |x-y|\geq 1}\frac{\utau(y,t)}{|x-y|}dy \equiv I_1 + I_2 .\label{a.dec}
\end{align}
The integral $I_2$ is uniformely bounded thanks to the boundedness of the mass.
To estimate $I_1$ we first use H\"older. Let $\eps\in (0,2)$.
\begin{align*}
I_1 &= \int_{|y|<\tau^{-\alpha},\, |x-y|<1}\frac{\utau(y,t)}{|x-y|}dy \\
&\leq \left( \int_{|y|<\tau^{-\alpha},\, |x-y|<1}\utau(y,t)^{(d-\eps)/(2-\eps)}dy \right)^{(2-\eps)/(d-\eps)}
\left( \int_{|x-y|<1}|x-y|^{-d+\eps}dy \right)^{(d-2)/(d-\eps)}\\
& \leq C\eps^{-1}\left( \int_{|y|<\min\{1+|x|,\tau^{-\alpha}\}}\utau(y,t)^{(d-\eps)/(2-\eps)}dy \right)^{(2-\eps)/(d-\eps)}\\
&= C\eps^{-1}\|\sqrt{\utau(t)}\|_{L^{2(3-\eps)/(2-\eps)}(B_{\rho(x)})}^2 ,
\end{align*}
with $\rho(x) = \min\{1+|x|,\tau^{-\alpha}\}$.
The interpolation inequality implies 
$$ \|\sqrt{\utau(t)}\|_{L^{2(3-\eps)/(2-\eps)}(B_{\rho(x)})} 
\leq \|\sqrt{\utau(t)}\|_{L^{2}(B_{\rho(x)})}^{1-\theta}\|\sqrt{\utau(t)}\|_{L^{6}(B_{\rho(x)})}^{\theta},\quad
\theta = \frac{3}{2}\frac{1}{3-\eps} . $$
Then, the Sobolev embedding $H^1\hookrightarrow L^{6}$ and the uniform boundedness of the mass implies
\begin{equation}
\|\sqrt{\utau(t)}\|_{L^{2(3-\eps)/(2-\eps)}(B_{\rho(x)})} \leq C(|x|)\|\sqrt{\utau(t)}\|_{H^1(B_{\rho(x)})}^\theta . \label{est.tmp}
\end{equation}
Notice that the constant $C$ in \eqref{est.tmp} depends on $|B_{\rho(x)}|$ and therefore on $|x|$. 
However, it is easy to estimate such constant (assuming w.l.o.g. that it is optimal). In fact, define
$$ C(R) \equiv \sup_{u\in H^1(B_R)\backslash\{0\}}\frac{\|u\|_{L^{6}(B_R)}}{\|u\|_{H^1(B_R)}} ,\qquad R\geq 1. $$
It is clear that each function $u\in H^1(B_R)$ can be written as $u(x) = v(x/R)$ with $v\in H^1(B_1)$. Moreover,
\begin{align*}
\|u\|_{L^{6}(B_R)}^{6} &= \int_{B_R}|v(x/R)|^{6} dx = R^3\int_{B_1}|v(y)|^{6} dy = R^3\|v\|_{L^{6}(B_1)}^{6},\\
\|u\|_{H^1(B_R)}^2 &= \int_{B_R}(|\na_x v(x/R)|^2 + |v(x/R)|^2 )dx 
= \int_{B_R}(R^{-2}|\na v(y)|^2 + |v(y)|^2 )\vert_{y=x/R}\, dx \\
&\geq R\int_{B_1}(|\na v|^2 + |v|^2 )dy = R\|v\|_{H^1(B_1)}^2 ,
\end{align*}
and so
$$ \frac{\|u\|_{L^{6}(B_R)}}{\|u\|_{H^1(B_R)}} \leq \frac{\|v\|_{L^{6}(B_1)}}{\|v\|_{H^1(B_1)}} ,\qquad R>1. $$
Thus \eqref{est.tmp} leads to
\begin{align}
\|\sqrt{\utau(t)}\|_{L^{2(3-\eps)/(2-\eps)}(B_{\rho(x)})} \leq C\|\sqrt{\utau(t)}\|_{H^1(B_{\rho(x)})}^\theta .\label{est.2}
\end{align}
From \eqref{est.2} and the boundedness of the mass we obtain
\begin{align*}
I_1 &\leq \eps^{-1}C \|\sqrt{\utau(t)}\|_{H^1(B_{\rho(x)})}^{2\theta }
\leq \eps^{-1}C(1 + \| \na \sqrt{\utau(t)}\|_{L^2(B_{\rho(x)})}^{2})^{\theta} \\
&\leq \eps^{-1}C\left(1 + (2+|x|)\int_{B_\rho(x)} \frac{|\na \sqrt{\utau(y,t)}|^2}{(1+|y|)} dy \right)^{\theta} \\
&\leq \eps^{-1}C(1+|x|)^{\theta}\left(1 + \int_\Bt \frac{|\na \sqrt{\utau(y,t)}|^2}{(1+|y|)} dy \right)^{\theta} .
\end{align*}
The estimates of $I_1$, $I_2$ imply
$$ a_\tau[\utau](x,t)^{1/\theta} \leq {C_\eps}(1+|x|)\left(1 + \int_\Bt\frac{|\na \sqrt{\utau(y,t)}|^2}{(1+|y|)} dy \right).$$
The discrete entropy inequality \eqref{dHdt.est} can be employed to bound the right-hand side of the above inequality:
$$ a_\tau[\utau](x,t)^{1/\theta} \leq {C_\eps}(1+|x|)\left(1 - (1+\Etau(t))^{1/2}D_\tau H^\tau[\utau(t)]\right) ,\quad \theta = \frac{3}{2}\frac{1}{3-\eps}. $$
We can restate the above estimate in a more handy way by defining $p = 1/\theta \in [1, 2)$:
\begin{align}\label{est.a.1}
a_\tau[\utau](x,t)^{p} &\leq C_p (1+|x|)\left(1 - (1+\Etau(t))^{1/2}D_\tau H^\tau[\utau(t)] \right) ,\quad 0\leq t\leq T,\\ 
&\qquad 1\leq p < 2.\nonumber
\end{align}

{\em Lower bound for $H[u]$.}
A lower bound for $H^\tau[u(t)]$ is here showed, which does not depend on $\tau$.
We point out that, since the integration domain converges, as $\tau\to 0$, to the whole space $\R^3$, this lower bound is not straightforward.
In fact, consider as an example a sequence $(u_n)_{n\in\N}\subset L^1(\R^3)$ such that $\|u_n\|_{L^1(\R^3)}=1$ for $n\geq 1$, and $u_n\to 0$ in $L^\infty(\R^3)$
as $n\to\infty$. It follows:
$$ \IRd u_n\log u_n dx \leq \IRd u_n\log\|u_n\|_{L^\infty(\R^3)} dx = \log\|u_n\|_{L^\infty(\R^3)}\to -\infty\quad\mbox{as }n\to\infty . $$
To prove this lower bound for $H[u]$, we write 
\begin{equation}
 H^\tau[u] =\int_{\{u<1\}} u(x)\log(u(x))dx + \int_{\{1\leq u < \tau^{-\alpha}\}} u(x)\log(u(x)) dx. \label{H.dec}
\end{equation}
We wish to show that the first integral is bounded from below by a suitable (real) constant. H\"older's inequality yields
\begin{align*}
&-\int_{\{u<1\}}u(x)\log u(x)\, dx = \int_{\{u<1\}}u(x)^{(1-\eps)/2} u(x)^{(1+\eps)/2}\log\frac{1}{u(x)}\, dx\\
&\leq \left( \int_{\{u<1\}}u(x)^{1-\eps}dx \right)^{1/2}\left( \int_{\{u<1\}}u(x)^{1+\eps} \left( \log\frac{1}{u(x)} \right)^2 dx \right)^{1/2}.
\end{align*}
Since the function 
$$s\in (0,1)\mapsto s^{\eps/2}\log(1/s)\in\R
$$ 
is bounded, we can estimate the term
$$
\int_{\{u<1\}}u(x)^{1+\eps} \left( \log\frac{1}{u(x)} \right)^2 dx
$$
with a constant that only depends on $\varepsilon$ and the $L^1$ norm of the initial datum. 
Therefore
\begin{align}
-\int_{\{u<1\}}u(x)\log u(x)\, dx &\leq C_\eps\left( \int_{\{u<1\}}u(x)^{1-\eps}dx \right)^{1/2}\leq C_\eps\left( \int_\Bt u(x)^{1-\eps}dx \right)^{1/2}.
\label{H.lb.1}
\end{align}
Let us now consider the integral
\begin{align*}
\int_\Bt u(x)^{1-\eps}dx &= \int_\Bt (1+|x|^2)^{1-\eps}u(x)^{1-\eps} (1+|x|^2)^{-(1-\eps)}  dx\\
&\leq\left(\int_\Bt (1+|x|^2)u(x)dx \right)^{1-\eps}\left(\int_\Bt (1+|x|^2)^{-(1-\eps)/\eps}dx\right)^{\eps}\\
&\leq\left(\int_\Bt (1+|x|^2)u(x)dx \right)^{1-\eps}\left(\IRd (1+|x|^2)^{-(1-\eps)/\eps}dx\right)^{\eps}
\end{align*}
For $\eps<2/5$ we obtain
\begin{align*}
\IRd u(x)^{1-\eps}dx &\leq C_\eps\left( \int_\Bt (1+|x|^2)u(x)dx \right)^{1-\eps}.
\end{align*}
From the above estimate, \eqref{H.dec} and \eqref{H.lb.1} we conclude
\begin{equation}
-H^\tau[\utau(t)]\leq -\int_{\{\utau<1\}}\utau(x,t)\log\utau(x,t)\, dx \leq C_\eps (1+E(t))^{(1-\eps)/2},\qquad t>0.\label{H.lb}
\end{equation}

{\em Boundedness of $\Etau$.}
We wish to find an upper bound for $\Etau$.
Let us choose $\phi(x,t) = |x|^2\psi(t)$ in \eqref{landau.tau} for some $\psi\in L^4(0,T)$, $\psi\geq 0$. It follows
\begin{align*}
&\int_0^T (D_\tau\Etau) \psi dt + \tau\int_0^T \int_\Bt ( |\wtau|^2\wtau |x|^2 + |\na\wtau|^2\na\wtau\cdot\na(|x|^2) )\psi dx dt\\
&+\int_0^T\int_{\Bt}x\cdot(a_\tau[\utau]\na\utau - \utau\na a_\tau[\utau])\psi dx dt = 0.
\end{align*}
H\umlaut{o}lder inequality yields
\begin{align*}
\tau\left| \int_{\Bt}|x|^2|\wtau|^2\wtau dx \right| &\leq \tau\|\wtau\|_{L^4(\Bt)}^3\left(\int_{\Bt}|x|^8 dx\right)^{1/4} 
\leq C\tau^{1/4 - (11/4)\alpha} ,\\
\tau\left| \int_{\Bt}x\cdot |\na\wtau|^2 \na\wtau dx \right| &\leq \tau\|\na\wtau\|_{L^4(\Bt)}^{3}\left( \int_{\Bt}|x|^4 dx \right)^{1/4} 
\leq C\tau^{1/4 - (7/4)\alpha}.
\end{align*}
Therefore 
$$ \tau\int_\Bt ( |\wtau|^2\wtau |x|^2 + |\na\wtau|^2\na\wtau\cdot\na(|x|^2) )dx\leq C
\qquad\mbox{if }\alpha \leq \frac{1}{11}. $$ 
Let us now consider
\begin{align*}
& -\int_{\Bt}x\cdot(a_\tau[\utau]\na\utau - \utau\na a_\tau[\utau])dx \\
& = \int_{\Bt}( -\Div(a_\tau[\utau]\utau x) + \utau\Div(a_\tau[\utau] x) + \utau x\cdot\na a_\tau[\utau])dx\\
& = -\int_{\pa\Bt}a_\tau[\utau]\utau |x|d\sigma + 3\int_\Bt a_\tau[\utau]\utau dx + 2\int_\Bt \utau x\cdot\na a_\tau[\utau] dx\\
&\leq 3\int_\Bt a_\tau[\utau]\utau dx + 2\int_\Bt \utau x\cdot\na a_\tau[\utau] dx .
\end{align*}
Let us consider the second integral on the right-hand side of the above inequality:
\begin{align*}
2\int_\Bt \utau x\cdot\na a_\tau[\utau] dx 
&= -\frac{2}{{4\pi}}\iint_{\Bt\times\Bt}\frac{x\cdot(x-y)}{|x-y|^3}\utau(x,t)\utau(y,t)dx dy\\
&= \frac{2}{{4\pi}}\iint_{\Bt\times\Bt}\frac{y\cdot(x-y)}{|x-y|^3}\utau(x,t)\utau(y,t)dx dy\\
&= -\frac{1}{{4\pi}}\iint_{\Bt\times\Bt}\frac{\utau(x,t)\utau(y,t)}{|x-y|}dx dy\\
&= -\int_\Bt a_\tau[\utau] \utau dx .
\end{align*}
Therefore
\begin{align*}
& -\int_{\Bt}x\cdot(a_\tau[\utau]\na\utau - \utau\na a_\tau[\utau])dx 
\leq 2\int_\Bt a_\tau[\utau]\utau dx .
\end{align*}
Summarizing up
$$ \int_0^T (D_\tau\Etau) \psi dt \leq \int_0^T\left(C + 2\int_\Bt a_\tau[\utau]\utau dx\right)\psi dt ,
\quad \psi\in L^4(0,T),~~\psi\geq 0, $$
which implies
\begin{align}\label{DEtau}
& D_\tau\Etau(t) \leq C + 2\int_\Bt a_\tau[\utau]\utau dx ,\qquad t\in [\tau,T].
\end{align}
Let $1\leq p < 2$. From H\umlaut{o}lder inequality and the boundedness of the mass it follows
($p' \equiv p/(p-1)$):
\begin{align}\label{est.Dtau.0}
D_\tau\Etau(t)&\leq C + 2\int_\Bt a_\tau[\utau](\utau)^{1/p} (\utau)^{1/p'}dx 
\leq C + C\left( \int_\Bt a_\tau[\utau]^p \utau dx \right)^{1/p}
\end{align}
Moreover, \eqref{est.a.1} implies
\begin{align}\label{critical}
&\left( \int_\Bt a_\tau[\utau]^p \utau dx \right)^{1/p} \\ \nonumber
&\leq C_p \left(1 - (1+\Etau(t))^{1/2}D_\tau H^\tau[\utau(t)] \right)^{1/p}\left( \int_\Bt (1+|x|) u(x,t) dx \right)^{1/p} .
\end{align}
From the above inequality it follows
\begin{align}\label{est.apu.0}
\left( \int_\Bt a_\tau[\utau]^p \utau dx \right)^{1/p} 
&\leq C_p \left(1 - (1+\Etau(t))^{1/2}D_\tau H^\tau[\utau(t)] \right)^{1/p}(1+\Etau(t)^{1/2})^{1/p}\\
\nonumber
&\leq C_p (1+\Etau(t))^{\frac{1}{p}}\left(1 - D_\tau H^\tau[\utau(t)] \right)^{1/p} .
\end{align}
Ineq.~\eqref{est.Dtau.0} and \eqref{est.apu.0} lead to
\begin{align*}
& D_\tau\Etau(t) \leq C_p (1+\Etau(t))^{\frac{1}{p}}\left(1 - D_\tau H^\tau[\utau(t)] \right)^{1/p} .
\end{align*}
It is more convenient to reframe the above inequality in the notation with $k$:
\begin{align}\label{mah.1}
E_k - E_{k-1} &\leq C_p\tau (1+E_k)^{\frac{1}{p}}\left(1 + \frac{H^\tau[u^{k-1}] - H^\tau[u^{k}] }{\tau} \right)^{1/p} ,\qquad k\geq 1.
\end{align}
We claim that from \eqref{mah.1} it follows
\begin{align}\label{mah.2}
(1+E_k)^{1-1/p} - (1+E_{k-1})^{1-1/p} &\leq C_p\tau \left(1 + \frac{H^\tau[u^{k-1}] - H^\tau[u^{k}] }{\tau} \right)^{1/p} ,\qquad k\geq 1.
\end{align}
In fact, if $E_k<E_{k-1}$, then \eqref{mah.2} is trivially true. On the other hand, if $E_k\geq E_{k-1}$, then \eqref{mah.1} implies
\begin{align*}
(1+E_k)^{1-1/p} &\leq \frac{1+E_{k-1}}{(1+E_k)^{\frac{1}{p}}} + C_p\tau \left(1 + \frac{H^\tau[u^{k-1}] - H^\tau[u^{k}] }{\tau} \right)^{1/p}\\
&\leq (1+E_{k-1})^{1-1/p} + C_p\tau \left(1 + \frac{H^\tau[u^{k-1}] - H^\tau[u^{k}] }{\tau} \right)^{1/p},
\end{align*}
that is, \eqref{mah.2} holds true.

Let us sum \eqref{mah.2} for $k=1,\ldots,\ell$ and apply a discrete H\umlaut{o}lder inequality (or just a convexity argument):
\begin{align*}
(1+E_\ell)^{1-1/p} &\leq (1+E_{0})^{1-1/p} + C_p\tau \sum_{k=1}^\ell\left(1 + \frac{H^\tau[u^{k-1}] - H^\tau[u^{k}] }{\tau} \right)^{1/p} \\
&\leq (1+E_{0})^{1-1/p} + C_p\tau\ell^{1-1/p}\left(\ell + \frac{H^\tau[u_0] - H^\tau[u^{\ell}] }{\tau} \right)^{1/p} .
\end{align*}
The sub-additivity of $x\in (0,\infty)\mapsto x^{1/p}$ leads to
\begin{align*}
(1+E_\ell)^{1-1/p} &\leq (1+E_{0})^{1-1/p} + C_p\tau\ell + C_p(\tau\ell)^{1-1/p}\left(H^\tau[u_0] - H^\tau[u^{\ell}] \right)^{1/p} .
\end{align*}
The uniform boundedness of $H^\tau[u_0]$ and \eqref{H.lb} imply
\begin{align*}
(1+E_\ell)^{1-1/p} &\leq (1+E_{0})^{1-1/p} + C_p\tau\ell + C_p(\tau\ell)^{1-1/p}\left(1 + (1+E_\ell)^{(1-\eps)/2}\right)^{1/p} \\
&\leq (1+E_{0})^{1-1/p} + C_p\tau\ell + C_p(\tau\ell)^{1-1/p}(1+E_\ell)^{(1-\eps)/2p}.
\end{align*}
Let us now choose $p=3/2$. As a consequence $1-1/p = 1/3 > (1-\eps)/3 = (1-\eps)/2p$, and therefore Young inequality leads to
\begin{align*}
(1+E_\ell)^{1/3} &\leq (1+E_{0})^{1/3} + C\tau\ell + C(\tau\ell)^{1/3}(1+E_\ell)^{(1-\eps)/3}\\
&\leq (1+E_{0})^{1/3} + C\tau\ell + C_\eps(\tau\ell)^{1/3\eps} + \frac{1}{2}(1+E_\ell)^{1/3},
\end{align*}
which, in the $\tau$ notation, it means
\begin{align*}
\frac{1}{2}(1+\Etau(t))^{1/3} &\leq (1+\Etau(0))^{1/3} + C t + C_\eps t^{1/3\eps} ,\qquad t>0,
\end{align*}
which means that, for some suitable constant $C_T>0$ (dependent on the final time $T>0$ but independent of $\tau$), it holds
\begin{equation}
\Etau(t)\leq C_T,\qquad t\in [0,T].\label{Etau.bound}
\end{equation}
{{\em Uniform bounds.}}
As a consequence of \eqref{Etau.bound}, inequalities \eqref{dHdt.est}, \eqref{H.lb}, \eqref{est.apu.0} become
\begin{align*}
& D_\tau H^\tau[\utau(t)] + \tau\|\wtau\|_{H^{2}(\Bt)}^{2}
+ C\int_{\Bt}\frac{\left|\na\sqrt{\utau(x,t)}\right|^2}{(1+|x|)} dx \leq 0,\\
&-H^{\tau}[\utau(t)]\leq {-\int_{\Bt\cap\{\utau<1\}}\utau(x,t)\log\utau(x,t)\, dx } \leq  C,\\
&\int_\Bt a_\tau[\utau]^p \utau dx \leq C_p (1 - D_\tau H^\tau[\utau(t)]),\qquad 1\leq p < 2,
\end{align*}
for $0\leq t\leq T$. The integration of the first and third inequalities with respect to time leads to
\begin{align}\label{dHdt.est.2}
& H^\tau[\utau(T)] + \tau\int_{0}^{T}\|\wtau\|_{H^{2}(\Bt)}^{2}dt
+ C\int_{0}^{T}\int_{\Bt}\frac{\left|\na\sqrt{\utau(x,t)}\right|^2}{(1+|x|)} dx dt \leq H^{\tau}[u_{0}],\\
\label{Htau.bound}
&-H^{\tau}[\utau(t)] \leq -\int_{\Bt\cap\{\utau<1\}}\utau(x,t)\log\utau(x,t) dx\leq C,\qquad 0\leq t\leq T,\\
\label{est.apu}
&\int_{0}^{T}\int_\Bt a_\tau[\utau]^p \utau dx \leq C_p,\qquad 1\leq p < 2.
\end{align}
Moreover (\ref{lb.intBRu}) implies 
\begin{align}
 \int_{|x|<\Rtau(t)}\utau(x,t)dx \geq \frac{c_1}{2},\qquad t\in [0,T],
\end{align}
and 
\begin{align}
a_\tau[\utau] & = \int_{\Bt}\frac{u_{\tau}(y)}{{4\pi} |x-y|}dy \ge \int_{|y|\leq\Rtau}\frac{u_{\tau}(y)}{{4\pi} |x-y|}dy \ge \frac{1}{\Rtau + |x|} \int_{|y|\leq\Rtau}\utau(x,t)dx \nonumber \\
& \ge \frac{c_1}{2(\Rtau + |x|)} \ge \frac{c_1}{2(C(T) + |x|)}, \label{a_tau_bound_below}
\end{align}
thanks to (\ref{Etau.bound}) and the fact that $m^{(\tau)}(t)$ is bounded above and below (\ref{a_tau_ba_bb}). 
{
We point out that from \eqref{dHdt.est.2}, \eqref{Htau.bound} it follows that 
\begin{equation}
\|\gamma \utau \|_{L^{1}(0,T; L^{3}(\Bt))}\leq C . \label{est.u.L1L3}
\end{equation}
To prove \eqref{est.u.L1L3} we start with the classical Sobolev inequality in three dimensions: 
 $$  \left(\int_{\Bt} g^6 \;dx \right)^\frac{1}{3} \le C \int_{\Bt} |\nabla g|^2 \;dx + {{\int_{\Bt} g^2\;dx }}, $$
 where the constant $C>0$ does not depend on $\tau$ (see proof of \eqref{est.2}) and apply it to 
 $$  g = \frac{\sqrt{\utau}}{(1+|x|)^{1/2}}.  $$
 Since 
 $$  |\nabla g | \le \frac{ |\nabla \sqrt{\utau} |}{(1+|x|)^{1/2}} +  \sqrt{\utau},  $$
 Sobolev inequality yields
  $$  \left(\int_{\Bt} \frac{(\utau)^3}{(1+|x|)^{3}}  \;dx \right)^{\frac{1}{3}} \le C \int_{\Bt}\frac{ |\nabla \sqrt{\utau} |^2}{(1+|x|)} +  \utau \;dx.  $$
 Integrating both sides in the time interval $(0,T)$ we get 
 \begin{align}\label{est_L1L3}
\int_0^T \left(\int_{\Bt} \frac{(\utau)^3}{(1+|x|)^{3}}  \;dx \right)^{\frac{1}{3}}dt 
& \le C\int_0^T \int_{\Bt}\frac{ |\nabla \sqrt{\utau} |^2}{(1+|x|)} \;dxdt +   \int_0^T \int_{\mathbb{R}^3}\utau \;dxdt  \nonumber \\
& \le C(T,u_0)
 \end{align}
 using mass conservation and estimate (\ref{est.nau}). \\
We wish now to find a set of estimates for $\utau$ which interpolate the bounds
$$ \sup_{t\in [0,T]}\Etau(t) = \| \gamma^{-2}\utau \|_{L^{\infty}(0,T; L^{1}(\Bt))} \leq C,\qquad
\|\gamma \utau \|_{L^{1}(0,T; L^{3}(\Bt))}\leq C . $$
Thanks to the bound $ \| \gamma^{-2}\utau \|_{L^{\infty}(0,T; L^{1}(\Bt))} \leq C$ it holds
\begin{align*}
& \int_{\Bt}\gamma^{3-5/p}(\utau)^{3-2/p}dx 
=\int_{\Bt}\left( \gamma^{-2}\utau \right)^{1/p}\left( \gamma^{3} (\utau)^{3} \right)^{1-1/p} dx\\
&\qquad \leq \left(\int_{\Bt}\gamma^{-2}\utau dx\right)^{1/p}\left(\int_{\Bt} \gamma^{3} (\utau)^{3} dx \right)^{1-1/p}
\leq C \left(\int_{\Bt} \gamma^{3} (\utau)^{3} dx \right)^{1-1/p} .
\end{align*}
Taking the power $\frac{p}{3(p-1)}$ of both members and integrating in $[0,T]$ leads to:
\begin{align*}
& \int_{0}^{T} \left( \int_{\Bt}\gamma^{3-5/p}(\utau)^{3-2/p}dx \right)^{\frac{p}{3(p-1)}}dt
\leq C \int_{0}^{T}\left(\int_{\Bt} \gamma^{3} (\utau)^{3} dx \right)^{1/3}dt . 
\end{align*}
By exploiting the bound $\|\gamma \utau \|_{L^{1}(0,T; L^{3}(\Bt))}\leq C$ we deduce:
\begin{align}\label{June20.bounds.u}
& \int_{0}^{T} \left( \int_{\Bt}\gamma^{3-5/p}(\utau)^{3-2/p}dx \right)^{\frac{p}{3(p-1)}}dt\leq C ,\qquad p>1.
\end{align}
In particular, choosing $p=3/2$ in \eqref{June20.bounds.u} yields
\begin{align}\label{June20.u53}
& \|\utau\|_{L^{5/3}(\Bt\times (0,T))}^{5/3}\leq \int_{0}^{T} \int_{\Bt}\gamma^{-1/3}(\utau)^{5/3}dx dt\leq C .
\end{align}
}
Now we will find a uniform bound for $D_{\tau}\utau$.
We define the functional space $X_{r}\equiv H^2(\R^3)\cap W^{2,r/(r-1)}(\R^{3},\gamma^{-1/3(r-1)}dx)$ for $r>1$.
Let us consider, for a given test function $\phi\in C^{\infty}_{c}(\Bt\times [0,T])$:
\begin{align*}
&\int_0^T\int_{\Bt}D_{\tau}\utau\, \phi dx dt + \tau\int_0^T\int_\Bt ( |\wtau|^2\wtau\phi + |\na\wtau|^2\na\wtau\cdot\na\phi )dx dt\\ 
&\qquad = -\int_0^T\int_{\Bt} \na\phi\cdot (a_{\tau}[\utau]\na\utau - \utau\na a_{\tau}[\utau])dx dt \\
&\qquad  = -\int_0^T\int_{\Bt}\na\phi\cdot(\na(a_{\tau}[\utau]\utau) - 2 \utau\na a_{\tau}[\utau])dx dt\\
&\qquad  = \int_0^T\int_{\Bt} a_{\tau}[\utau]\utau \Delta\phi dx dt + 2\int_{\Bt} \utau\na a_{\tau}[\utau]\cdot\na\phi dx dt .
\end{align*}
However, for $i=1,2,3$,
\begin{align*}
\utau\pa_{x_i}a_{\tau}[\utau] &= -\sum_{j=1}^3 \pa_{x_j x_j}^2 a_{\tau}[\utau]\, \pa_{x_i}a_{\tau}[\utau]\\
&= -\sum_{j=1}^3 \pa_{x_j}(\pa_{x_j} a_{\tau}[\utau]\, \pa_{x_i}a_{\tau}[\utau]) + \sum_{j=1}^3 \pa_{x_i x_j}^2 a\, \pa_{x_j}a \\
&= -\sum_{j=1}^3 \pa_{x_j}(\pa_{x_j} a_{\tau}[\utau]\, \pa_{x_i}a_{\tau}[\utau]) + \frac{1}{2}\pa_{x_i}(|\na a_{\tau}[\utau]|^2).
\end{align*}
Therefore
\begin{align}\label{weak.1}
&\int_0^T\int_{\Bt}D_{\tau}\utau\, \phi dx dt 
= -\tau\int_0^T\int_\Bt ( |\wtau|^2\wtau\phi + |\na\wtau|^2\na\wtau\cdot\na\phi )dx dt\\
\nonumber
&\qquad +\int_0^T\int_{\Bt} (a_{\tau}[\utau]\utau - |\na a_{\tau}[\utau]|^{2})\Delta\phi dx dt \\
\nonumber
&\qquad +2\int_{0}^{T}\int_{\Bt}\na a_{\tau}[\utau]\cdot (D_{x}^{2}\phi)\na a_{\tau}[\utau] dx dt, \qquad
\phi\in C^{\infty}_{c}(\Bt\times [0,T]).
\end{align}
It follows
\begin{align}\label{est.ut.1}
&\left|\int_0^T\int_{\Bt}D_{\tau}\utau\, \phi dx dt\right| 
\leq \tau\int_0^T\int_\Bt ( |\wtau|^3 |\phi| + |\na\wtau|^3 |\na\phi| )dx dt\\
\nonumber
&\qquad + C\int_0^T\int_{\Bt}|D_x^2\phi| (|\na a_{\tau}[\utau]|^2 + a_{\tau}[\utau]\utau)dx dt .
\end{align}
The contribution of the regularizing term is easily controlled by means of \eqref{dHdt.est.2}:
\begin{align}\label{ut.w}
&\tau\int_0^T\int_\Bt ( |\wtau|^3 |\phi| + |\na\wtau|^3 |\na\phi| )dx dt \\
\nonumber
&\qquad\leq \tau\|\wtau\|_{L^{4}(0,T; W^{1,4}(\Bt))}^3\|\phi\|_{L^{4}(0,T; W^{1,4}(\Bt))}\\
\nonumber
&\qquad\leq C\tau^{1/4}\|\phi\|_{L^{4}(0,T; W^{1,4}(\Bt))}.
\end{align}
Let us now consider:
\begin{align*}
&|\na a_{\tau}[\utau]|\leq \int_{\Bt}\frac{\utau(y,t)}{|x-y|^{2}}dy = f + g,\\
& f (x,t) \equiv \int_{\Bt\cap\{|x-y|<1\}}\frac{\utau(y,t)}{|x-y|^{2}}dy,\qquad g(x,t) \equiv \int_{\Bt\cap\{|x-y|\geq 1\}}\frac{\utau(y,t)}{|x-y|^{2}}dy.
\end{align*}
The function $g$ can be easily controlled by exploiting the boundedness of the mass:
\begin{align}\label{est.g}
& \|g\|_{L^{p}(\Bt\times(0,T))} \leq \left( \int_{|z|\geq 1}|z|^{-2p}dz \right)^{1/p}\|\utau\|_{L^{1}(\Bt\times(0,T))}\leq C,\qquad p>\frac{3}{2}.
\end{align}
{We recall that} $\gamma(x)=(1+|x|)^{-1}$, and let ${ \lambda\in\R }$ to be specified later. It holds
\begin{align*}
f(x,t)\gamma(x)^{(1-\lambda)/3} &= \int_{\Bt\cap\{|x-y|<1\}}(1+|x|)^{-(1-\lambda)/3}\frac{\utau(y,t)}{|x-y|^{2}}dy\\
&\leq C \int_{\Bt\cap\{|x-y|<1\}}(1+|y|)^{-(1-\lambda)/3}\frac{\utau(y,t)}{|x-y|^{2}}dy\\
& = C \int_{\Bt\cap\{|x-y|<1\}}\gamma(y)^{(1-\lambda)/3}\frac{\utau(y,t)}{|x-y|^{2}}dy
\end{align*}
and therefore
\begin{align*}
& \|f(t)^{2}\gamma^{2(1-\lambda)/3}\|_{L^{r}(\Bt)} = \|f(t)\gamma^{(1-\lambda)/3}\|_{L^{2r}(\Bt)}^{2} \\
&\leq \left( \int_{|s|<1} |s|^{-3+\eps}ds \right)^{4/(3-\eps)}\left( \int_{\Bt}\gamma(x)^{q(1-\lambda)/3}\utau(x,t)^{q}dx \right)^{2/q},
\end{align*}
with $1+\frac{1}{2r}=\frac{2}{3-\eps}+\frac{1}{q}$. 
{
Taking the power $r$ of both members in the above inequality and integrating it in time leads to
\begin{align*}
& \|f^{2}\gamma^{2(1-\lambda)/3}\|_{L^{r}(\Bt\times(0,T))}^{r} \leq 
C_{\eps}\int_{0}^{T}\left( \int_{\Bt}\gamma(x)^{q(1-\lambda)/3}\utau(x,t)^{q}dx \right)^{2r/q}dt .
\end{align*}
In order to control the right-hand side of the above inequality we apply \eqref{June20.bounds.u}. We wish to choose some $p\in (1,\infty)$
such that
$$ \frac{2r}{q} = \frac{p}{3(p-1)},\qquad \frac{q(1-\lambda)}{3} = 3 - \frac{5}{p},\qquad q = 3-\frac{2}{p} . $$
It follows that $ 2r = \frac{3p - 2}{3(p-1)} $. We want $r>1$ which is equivalent to $p<4/3$. 
Furthermore the relation $1+\frac{1}{2r}=\frac{2}{3-\eps}+\frac{1}{q}$ must be verified for some $\eps>0$, i.e. 
$\frac{1}{3}+\frac{1}{2r}-\frac{1}{q}>0$. This can be rewritten as
$$ \frac{1}{3}+\frac{3(p-1)}{3p - 2} - \frac{p}{3p-2}>0 $$
which is equivalent to $p>11/9$. Since $11/9<4/3$ we can choose $p\in (11/9,4/3)$ and conclude
}
%
%
\begin{align}\label{est.f}
& \|f^{2}\gamma^{1/3r}\|_{L^{r}(\Bt\times(0,T))} \leq C\qquad\mbox{for some }r>1.
\end{align}
From \eqref{est.g}, \eqref{est.f} and the facts that $|\na a_{\tau}[\utau]|\leq f + g$, $0\leq\gamma\leq 1$ we deduce that
\begin{align}
& \| |\na a_{\tau}[\utau]|^{2}\gamma^{1/3r} \|_{L^{r}(\Bt\times(0,T))}\leq C\qquad\mbox{for some }r>1.
\label{est.naa2}
\end{align}
{
Let us now consider, for $2 > p > r > 1$:
\begin{align*}
& \|a_{\tau}[\utau] \utau \|_{L^{r}(\Bt\times (0,T))} \\
&\qquad = \|a_{\tau}[\utau] (\utau)^{1/p} (\utau)^{1-1/p}\|_{L^{r}(\Bt\times (0,T))}\\
&\qquad \leq \|a_{\tau}[\utau] (\utau)^{1/p}\|_{L^{p}(\Bt\times (0,T))}\|(\utau)^{1-1/p}\|_{L^{pr/(p-r)}(\Bt\times (0,T))}\\
&\qquad = \|a_{\tau}[\utau]^{p}\utau\|_{L^{1}(\Bt\times (0,T))}^{1/p}\|\utau\|_{L^{(p-1)r/(p-r)}(\Bt\times (0,T))}^{1-1/p}
\end{align*}
From \eqref{est.apu} it follows
\begin{align*}
& \|a_{\tau}[\utau] \utau \|_{L^{r}(\Bt\times (0,T))} \leq C_{p}\|\utau\|_{L^{(p-1)r/(p-r)}(\Bt\times (0,T))}^{1-1/p}.
\end{align*}
We employ \eqref{June20.u53} to bound the right-hand side of the above inequality.
}
We wish to choose $p$, $r$ such that $2 > p > r > 1$ and $(p-1)r/(p-r) = 5/3$.
This implies $p = 2r/(5 - 3r)$. The constraint $p>r$ is automatically satisfied if $r>1$.
On the other hand, $p<2$ must hold, i.e. $r<5/4$.
Therefore we obtain
\begin{align}\label{a_unif_tau}
& \|a_{\tau}[\utau] \utau \gamma^{1/3r}\|_{L^{r}(\Bt\times (0,T))}\leq \|a_{\tau}[\utau] \utau \|_{L^{r}(\Bt\times (0,T))} \leq C,\qquad 1<r<\frac{5}{4}.
\end{align}
By putting the above estimate and \eqref{est.naa2} together we obtain:
\begin{align}\label{est.a.ugly}
& \| |\na a_{\tau}[\utau]|^{2}\gamma^{\frac{1}{3r}} \|_{L^{r}(\Bt\times(0,T))}
+ \|a_{\tau}[\utau] \utau \gamma^{\frac{1}{3r}}\|_{L^{r}(\Bt\times (0,T))} \leq C
\quad\mbox{for some }r > 1.
\end{align}
From \eqref{est.ut.1}, \eqref{ut.w} and \eqref{est.a.ugly} we conclude
\begin{align}\label{est.ut}
& \left|\int_0^T\int_{\Bt}D_{\tau}\utau\, \phi\, dx dt\right|\\
&\leq C\left( \tau^{1/4}\|\phi\|_{L^{4}(0,T; W^{1,4}(\Bt))} + \|\gamma^{-\frac{1}{3r}}\phi\|_{L^{\frac{r}{r-1}}(0,T; W^{2,\frac{r}{r-1}}(\Bt))} \right)
\quad\mbox{for some }r>1 .\nonumber
\end{align}
{\em Step 3: Limit $\tau\to 0$.} From \eqref{dHdt.est.2}, \eqref{Htau.bound}, \eqref{est.ut}, and the uniform boundedness of the mass it follows
\begin{align}
& \mbox{for any ball }B\subset\R^{3}\mbox{ there exists }C_{B}>0\mbox{ such that }\label{loc.b}\\
\nonumber
&\| (\utau)^{1/2} \|_{L^{2}(0,T; H^{1}(B))} + \|D_{\tau}\utau\|_{L^{r}(0,T; W^{2,r/(r-1)}(B)')}\leq C_{B} ,
\end{align}
where $r>1$ is as in \eqref{est.ut}.
Therefore we can apply Aubin-Lions Lemma in the version of \cite[Thr.~A.5]{Jue2016} and deduce that, 
for any $n\in\N$, a subsequence of $\utau$ exists, which is strongly convergent in $L^{1}(0,T; L^{3}(B_{n}))$,
where $B_{n} = \{x\in\R^{3}~:~|x|<n\}$. In particular, for any $n\in\N$, $\utau$ admits a subsequence, which is
a.e. convergent in $B_{n}\times [0,T]$. A Cantor diagonal argument allows us to extract from $\utau$ a subsequence,
which we will denote again with $\utau$, such that
\begin{align}
\label{utau.scnv}
& \utau\to u\qquad\mbox{strongly in }L^{1}(0,T; L^{3}(K))~~\mbox{for any compact set }K\subset\R^{3},\\
\label{utau.aecnv}
& \utau\to u\qquad\mbox{a.e. in }\R^{3}\times[0,T].
\end{align}
{\em Almost everywhere convergence of $a_{\tau}[\utau]$.}
Let us now show that $a_{\tau}[\utau]\to a[u]$ a.e. in $\R^{3}\times [0,T]$, where 
$$ a[u](x,t) \equiv \frac{1}{4\pi}\int_{\R^{3}}\frac{u(y,t)}{|x-y|}dy . $$
The uniform boundedness of mass:
\begin{align*}
& \|\utau\|_{L^{\infty}(0,T; L^{1}(\Bt))} = \sup_{[0,T]}\int_{\R^{3}}\utau\chi_{\Bt}dx\leq C ,
\end{align*}
and Fatou's Lemma imply that $u\in L^{\infty}(0,T; L^{1}(\R^{3}))$. Therefore
$$
\left| a_{\tau}[u](x,t) - a[u](x,t) \right| = \frac{1}{4\pi}\int_{|y|\geq\tau^{-\alpha}}\frac{u(y,t)}{|x-y|}dy
\leq \frac{\frac{1}{4\pi}}{\tau^{-\alpha}-|x|}\int_{|y|\geq\tau^{-\alpha}}u(y,t)dy\to 0
$$
as $\tau\to 0$, a.e. $x\in\R^{3}$, $t>0$. Therefore in order to prove that $a_{\tau}[\utau]\to a[u]$ a.e. $x\in\R^{3}$, $t>0$, it is sufficient to show that
$|a_{\tau}[\utau] - a_{\tau}[u]|\to 0$ a.e. $x\in\R^{3}$, $t>0$. Given any $\eps > \tau^{\alpha}$ it holds
\begin{align*}
& |a_{\tau}[\utau] - a_{\tau}[u]| \leq \frac{1}{{4\pi}}\int_{\Bt}\frac{|\utau(y,t)-u(y,t)|}{|x-y|}dy = \frac{1}{4\pi}(J_{1,\eps} + J_{2,\eps}),\\
& J_{1,\eps} \equiv \int_{|y|<1/\eps} \frac{|\utau(y,t)-u(y,t)|}{|x-y|}dy,\qquad 
J_{2,\eps} \equiv \int_{1/\eps\leq |y|<\tau^{-\alpha}} \frac{|\utau(y,t)-u(y,t)|}{|x-y|}dy.
\end{align*}
Let us consider $J_{1,\eps}$. H\"older inequality leads to
\begin{align*}
& J_{1,\eps}^{3} \leq \left(\int_{|y|<1/\eps} \frac{|\utau(y,t)-u(y,t)|^{3}}{|x-y|}dy\right)\left(\int_{|y|<1/\eps} \frac{dy}{|x-y|}\right)^{2}.
\end{align*}
By integrating the above inequality in an arbitrary compact set $K\subset\R^{3}$ we get
\begin{align*}
\int_{K}J_{1,\eps}^{3}dx &\leq \int_{|y|<1/\eps} \left(\int_{K}\frac{dx}{|x-y|}\right)|\utau(y,t)-u(y,t)|^{3} dy \,
\sup_{x\in K}\left(\int_{|y|<1/\eps} \frac{dy}{|x-y|}\right)^{2}\\
&\leq C_{\eps,K}\int_{|y|<1/\eps}|\utau(y,t)-u(y,t)|^{3} dy,
\end{align*}
which implies ($B_{1/\eps}\equiv\{x\in\R^{3}~:~|x|<1/\eps\}$):
\begin{align}
& \|J_{1,\eps}\|_{L^{1}(0,T; L^{3}(K))}\leq C_{\eps,K}\|\utau - u\|_{L^{1}(0,T; L^{3}(B_{1/\eps}))}\to 0\qquad\tau\to 0.\label{J1.cnv}
\end{align}
Let us now find an upper bound for $J_{2,\eps}$. For any $x\in\R^{3}$, $|x|<\eps^{-1}$, it holds
\begin{align*}
& |J_{2,\eps}|\leq \int_{1/\eps\leq |y|<\tau^{-\alpha}} \frac{|\utau(y,t)-u(y,t)|}{\eps^{-1}-|x|}dy
\leq \frac{\eps}{1-\eps |x|}\left( \int_{\Bt}\utau dx + \int_{\Bt}u dx\right).
\end{align*}
The uniform boundedness of the mass implies that, given any compact $K\subset B_{1/\eps}$, it holds
\begin{align}\label{J2.b}
& \|J_{2,\eps}\|_{L^{1}(0,T; L^{3}(K))}\leq C \eps \left( \int_{K}\frac{dx}{(1-\eps |x|)^{3}} \right)^{1/3} .
\end{align}
From \eqref{J1.cnv}, \eqref{J2.b} it follows that, given any compact $K\subset\R^{3}$,
\begin{align*}
& \limsup_{\tau\to 0}\| a_{\tau}[\utau] - a_{\tau}[u] \|_{L^{1}(0,T; L^{3}(K))}\leq C \eps \left( \int_{K}\frac{dx}{(1-\eps |x|)^{3}} \right)^{1/3} .
\end{align*}
Since the right-hand side of the above inequality tends to 0 as $\eps\to 0$, while the left-hand side is independent of $\eps$, we conclude that
the left-hand side vanishes. Therefore $\| a_{\tau}[\utau] - a_{\tau}[u] \|_{L^{1}(0,T; L^{3}(K))}\to 0$ as $\tau\to 0$, for any compact $K\subset\R^{3}$.
In particular, for any compact $K\subset\R^{3}$ there exists a subsequence of $a_{\tau}[\utau] - a_{\tau}[u]$ which is a.e. convergent in $K\times [0,T]$.
Choosing $K=B_{n}$, $n\in\N$, and applying again a Cantor diagonal argument, we extract a subsequence of $a_{\tau}[\utau] - a_{\tau}[u]$ such that
$a_{\tau}[\utau] - a_{\tau}[u]\to 0$ a.e. in $\R^{3}\times [0,T]$. Since we already know that $a_{\tau}[u]\to a[u]$ a.e. in $\R^{3}\times[0,T]$,
we conclude that, up to a subsequence,
\begin{align}\label{conv_punt_a}
a_{\tau}[\utau]\to a[u]\qquad\mbox{a.e. in }\R^{3}\times[0,T]. 
\end{align}
By exploiting the same strategy we can also show that, up to a subsequence,
$$ \na a_{\tau}[\utau]\to \na a[u]\qquad\mbox{a.e. in }\R^{3}\times[0,T]. $$
The a.e. convergence of $\utau$, $a_{\tau}[\utau]$, $\na a_{\tau}[\utau]$, and \eqref{est.a.ugly} allow us to apply Vitali's Theorem and deduce that,
for any compact $K\subset\R^{3}$, and for $i,j=1,2,3$,
\begin{align}
& a_{\tau}[\utau]\utau \to a[u] u\quad\mbox{strongly in }L^{1}(K\times [0,T]),\label{au.cnv}\\
& (\pa_{x_{i}}a_{\tau}[\utau])(\pa_{x_{j}}a_{\tau}[\utau]) \to (\pa_{x_{i}}a[u])(\pa_{x_{j}}a[u])\quad\mbox{strongly in }L^{1}(K\times [0,T]).\label{naa2.cnv}
\end{align}
From \eqref{est.ut}, \eqref{utau.scnv} it follows
\begin{align}\label{ut.wcnv}
\exists r>1~:~\forall K\subset\R^{3}\mbox{ compact, }\quad
& D_{\tau}\utau\rightharpoonup\pa_{t}u\quad\mbox{weakly in }L^{r}(0,T; W^{2,\frac{r}{r-1}}(K)').
\end{align}
Relations \eqref{ut.w}, \eqref{au.cnv}--\eqref{ut.wcnv} allow us to take the limit $\tau\to 0$ in \eqref{weak.1} and get
\begin{align}\label{landau.weak}
&\int_0^T\int_{\R^{3}}\pa_{t}u\, \phi dx dt = \int_0^T\int_{\R^{3}} (a[u]u - |\na a[u]|^{2})\Delta\phi dx dt \\
&\qquad +2\int_{0}^{T}\int_{\R^{3}}\na a[u]\cdot (D_{x}^{2}\phi)\na a[u] dx dt, \qquad
\phi\in C^{\infty}_{c}(\Bt\times [0,T]).\nonumber
\end{align}
Furthermore, \eqref{dHdt.est.2}, \eqref{est.a.ugly}, \eqref{est.ut} imply that, for some $r>1$,
\begin{align}\label{est.nau}
\sqrt{u} &\in L^2(0,T; H^1(\R^3,\gamma(x)dx)),\\
|\na a[u]|^{2},~ a[u] u &\in L^{r}(0,T; L^{r}(\R^{3},\gamma^{1/3}(x)dx)),\label{est.nau2}\\
\pa_{t}u &\in L^{r}(0,T; W^{2,r/(r-1)}(\R^{3},\gamma^{-1/3(r-1)}(x)dx)').\label{reg.ut}
\end{align}
{\em Mass conservation.}
We show now that the mass is conserved. Eq.~\eqref{mass.b} implies that 
\begin{equation}
\lim_{\tau\to 0}\int_{\Bt}\utau(x,t) dx = \|u_{0}\|_{L^{1}(\R^{3})}\qquad\mbox{a.e. }t\in [0,T].\label{mass.cnv}
\end{equation}
In particular, Fatou's Lemma implies that
$\|u(t)\|_{L^{1}(\R^{3})}\leq \|u_{0}\|_{L^{1}(\R^{3})}$, $t>0$. We want to show that equality holds, that is, the mass is conserved. Clearly 
$ \int_{\Bt}u(x,t) dx\to \int_{\R^{3}}u(x,t)dx $ as $\tau\to 0$. Therefore it is enough to show that $ \int_{\Bt}|\utau - u|dx\to 0$ as $\tau\to 0$. 
Cauchy-Schwartz inequality leads to
\begin{align*}
\int_{\Bt}|\utau - u|dx &= \int_{\Bt}\frac{|\utau - u|^{1/2}}{(1+|x|^{2})^{1/2}}|\utau - u|^{1/2}(1+|x|^{2})^{1/2}dx\\
&\leq \left( \int_{\Bt}\frac{|\utau - u|}{1+|x|^{2}} dx\right)^{1/2}  \left( \int_{\Bt}(1+|x|^{2}) |\utau - u| dx\right)^{1/2}.
\end{align*}
The uniform boundedness of mass and second moment $\Etau$ implies
\begin{align*}
& \int_{\Bt}|\utau - u|dx \leq C\left( \int_{\Bt}\frac{|\utau - u|}{1+|x|^{2}} dx\right)^{1/2} .
\end{align*}
Let $\eps>\tau^{\alpha}$, $B_{1/\eps}\equiv\{x\in\R^3~:~|x|<1/\eps\}$. It holds
\begin{align*}
& \|\utau - u\|_{L^2(0,T; L^1(\Bt))}^2 \leq 
C \int_0^T\int_{B_{1/\eps}}\frac{|\utau - u|}{1+|x|^{2}} dx + C\int_0^T\int_{\Bt\backslash B_{1/\eps}}\frac{|\utau - u|}{1+|x|^{2}} dx .
\end{align*}
From \eqref{utau.scnv} it follows that 
$$ \int_0^T\int_{B_{1/\eps}}\frac{|\utau - u|}{1+|x|^{2}} dx\to 0\qquad\tau\to 0, $$
while the uniform boundedness of the mass implies
$$ \int_0^T\int_{\Bt\backslash B_{1/\eps}}\frac{|\utau - u|}{1+|x|^{2}} dx\leq \frac{C}{1+\eps^{-2}}. $$
Therefore
\begin{align*}
&\limsup_{\tau\to 0}\|\utau - u\|_{L^2(0,T; L^1(\Bt))}^2 \leq  \frac{C}{1+\eps^{-2}}.
\end{align*}
Since the right-hand side of the above inequality tends to 0 as $\eps\to 0$ while the left-hand side is independent of $\eps$, we deduce that
$\lim_{\tau\to 0}\|\utau - u\|_{L^2(0,T; L^1(\Bt))}=0$, and therefore, up to a subsequence,
$\lim_{\tau\to 0}\|\utau - u\|_{L^1(\Bt)}=0$ a.e. $t\in [0,T]$. Since we knew already that 
$ \int_{\Bt}u(x,t) dx\to \int_{\R^{3}}u(x,t)dx $ as $\tau\to 0$, a.e. $t\in [0,T]$, we conclude that 
$ \int_\Bt\utau(x,t)dx\to \int_{\R^3} u(x,t) dx$ as $\tau\to 0$ a.e. $t\in [0,T]$. This fact and \eqref{mass.cnv} imply that
$ \int_{\R^3}u(x,t)dx = \int_{\R^3}u_0(x)dx$ a.e. $t\in [0,T]$, i.e. the mass is conserved.\medskip\\
{\em Weak formulation. }
We prove now that 
\begin{equation}
a[u]\in L^\infty(0,T; L^3_{loc}(\R^3)),~\na a[u]\in L^\infty(0,T; L^{3/2}_{loc}(\R^{3})). \label{a.reg.new}
\end{equation}
This will allow us to obtain the weak formulation of \eqref{landau} from \eqref{landau.weak}.

Let us define $\xi(s)=(1+s)\log(1+s)$, $s>0$. Since the mapping $s\in [0,\infty)\mapsto\xi(s)\in [0,\infty)$ is invertible,
we can define, for a given $p\geq 3/2$, the function $f_p : [0,\infty)\to [0,\infty)$ as $f_p(\xi(s)) = s^p$, $s>0$.
We point out that $f_p$ is convex for any $p\geq 3/2$. In fact, differentiating the relation $f_p(\xi(s)) = s^p$ once with respect to $s$ implies
$$ f_p'(\xi(s)) = \frac{p s^{p-1}}{1+\log(1+s)},\qquad s > 0, $$
and differentiating again leads to
$$ (1+\log(1+s))f_p''(\xi(s)) = ps^{p-2}\left( p -1 - \frac{s}{1+s}\frac{1}{(1+\log(1+s))^{2}} \right),\qquad s > 0. $$
It is easy to see that the function $s\mapsto (p-1)(1+s)(1+\log(1+s))^{2}-s$ is nondecreasing. Since it is positive at $s=0$, this means that 
it is positive for $s>0$. In particular $f_p''(\xi(s))\geq 0$ for $s>0$. Being $\xi$ invertible, we conclude that $f_p$ is convex.

Let $R>0$ arbitrary, let $m$ be the mass, and define
$$ \rho = 2\sqrt{R^2 + \frac{1}{m}\sup_{0\leq t\leq T}E(t)} < \infty . $$
We split ${4\pi} a = a_1 + a_2$ with 
$$ a_1(x,t) = \int_{|x-y|<\rho}\frac{u(y)}{|x-y|}dy,\qquad a_2(x,t) = \int_{|x-y|>\rho}\frac{u(y)}{|x-y|}dy. $$
We will show that $a_i\in L^\infty(0,T; L^3(B_R))$, $i=1,2$.

Let us first consider $a_2$. Since $1+|y|^2\geq C_R'(|x|^2+|y|^2)\geq C_R'' |x-y|^2$ for $|x|<R$, $y\in\R^3$, it holds
\begin{align*}
\int_{B_R}a_2(x,t)^3 dx \leq C_R\int_{B_R}\left(\int_{|x-y|>\rho}\frac{(1+|y|^2)u(y,t)}{|x-y|^3}dy\right)^3 dx.
\end{align*}
Jensen's inequality and the uniform boundedness of the second moment of $u$ imply
\begin{align*}
\int_{B_R}a_2(x,t)^3 dx \leq C_R\int_{B_R}\int_{|x-y|>\rho}\frac{(1+|y|^2)u(y,t)}{|x-y|^9}dy dx 
\leq C_R\int_{\R^3}(1+|y|^2)u(y,t)dy\leq C_R .
\end{align*}
Therefore $a_2\in L^\infty(0,T; L^3(B_R))$. Let us now focus our attention on $a_1$. 
The fact that $c_1 \equiv \int_{|x-y|<\rho}|x-y|^{-1}dx<\infty$ is independent of $x\in\R^3$, the convexity of $\xi$
and Jensen's inequality imply
$$ \xi(a_1(x,t)) \leq \frac{1}{c_1}\int_{|x-y|<\rho}|x-y|^{-1}\xi(c_1 u(y,t))dy\qquad x\in\R^3,~~t\in [0,T] . $$
Let us now notice that $\xi(c_1 u) \in L^\infty(0,T; L^1(\R^3))$. In fact, \eqref{dHdt.est.2}, \eqref{Htau.bound} imply that 
$$ \int_{\Bt}\utau(x,t)(\log\utau(x,t))_+ dx = H^{\tau}[\utau(t)] - \int_{\Bt\cap\{\utau<1\}}\utau(x,t)\log\utau(x,t) dx \leq C $$
for $0\leq t\leq T$. By Fatou's lemma we infere $u(\log u)_+ \in L^\infty(0,T; L^1(\R^3))$. However, it is easy to see that 
$ \xi(c_1 s)\leq C s( (\log s)_+ + 1) $ for $s\geq 0$, so $\xi(c_1 u) \in L^\infty(0,T; L^1(\R^3))$.
Therefore again by Jensen's inequality we deduce
$$ a_1(x,t)^3 = f_3(\xi(a_1(x,t))) \leq \int_{|x-y|<\rho}f_3\left(\frac{\sigma(x,t)}{c_1 |x-y|}\right)\frac{\xi(c_1 u(y,t))}{\sigma(x,t)}dy\quad x\in\R^3,~~t\in [0,T], $$
where $\sigma(x,t)\equiv \int_{|x-y|<\rho}\xi(c_1 u(y,t))dy$. Clearly $\sigma(x,t)\leq \sigma_1$, $|x|<R$, $0\leq t\leq T$, 
for some constant $\sigma_1>0$. However, $\sigma(x,t)$ is also uniformely positive. In fact, since $\xi(s)\geq C s$ for $s>0$, it holds
\begin{align*}
\sigma(x,t) &= \int_{|x-y|<\rho}\xi(c_1 u(y,t))dy\geq C\int_{|x-y|<\rho}u(y,t)dy = C\left( m -  \int_{|x-y|>\rho}u(y,t)dy\right)\\
&\geq C\left( m - \frac{1}{\rho^2}\int_{|x-y|>\rho}|x-y|^2 u(y,t)dy\right)\geq C\left( m - \frac{2}{\rho^2}\int_{\R^3}(|x|^2 + |y|^2 ) u(y,t)dy\right)\\
& = C\left( m - \frac{2(m |x|^2 + E(t))}{\rho^2}\right).
\end{align*}
The above inequality and the definition of $\rho$ imply that $\sigma(x,t)\geq \sigma_{0}>0$, $|x|<R$, $0\leq t\leq T$, for some positive constant $\sigma_{0}$.
The lower and upper bounds for $\sigma(x,t)$, as well as the fact that $f_3$ is nondecreasing, lead to
$$ a_1(x,t)^3 \leq C\int_{|x-y|<\rho}f_3\left(\frac{\sigma_1}{c_1 |x-y|}\right)\xi(c_1 u(y,t))dy\quad x\in\R^3,~~t\in [0,T]. $$
By integrating the above inequality in $B_R$ w.r.t. $x$ we obtain
\begin{align*}
\int_{|x|<R} a_1(x,t)^3 dx 
&\leq C\int_{|x|<R}\int_{|x-y|<\rho}f_3\left(\frac{\sigma_1}{c_1 |x-y|}\right)\xi(c_1 u(y,t))dy ~ dx\\
&=C\int_{|x|<R}\int_{|y|<\rho}f_3\left(\frac{\sigma_1}{c_1 |y|}\right)\xi(c_1 u(x-y,t))dy ~ dx\\
&=C\int_{|y|<\rho}f_3\left(\frac{\sigma_1}{c_1 |y|}\right)\int_{|x|<R}\xi(c_1 u(x-y,t))dx ~ dy\\
&\leq C\int_{|y|<\rho}f_3\left(\frac{\sigma_1}{c_1 |y|}\right)dy \int_{\R^3}\xi(c_1 u(x,t))dx,
\end{align*}
which implies, since $\xi(c_1 u) \in L^\infty(0,T; L^1(\R^3))$,
\begin{align}
\|a_1\|_{L^\infty(0,T; L^3(B_R))}^3 &\leq C \int_{|y|<\rho}f_3\left(\frac{\sigma_1}{c_1 |y|}\right)dy .\label{est.a1.new}
\end{align}
Let $\lambda = \sigma_1/c_1$. We want to show that the integral on the right-hand side of \eqref{est.a1.new} is convergent.
By using polar coordinates we get
$$ \int_{|y|<\rho}f_3\left(\frac{\sigma_1}{c_1 |y|}\right)dy = 4\pi\int_{0}^\rho f_3(\lambda r^{-1})r^2 dr . $$
By making the change of variables $\lambda r^{-1} = \xi(s) = (1+s)\log(1+s)$ and recalling the definition of $f_3$ we deduce
\begin{align*}
\int_{|y|<\rho}f_3\left(\frac{\sigma_1}{c_1 |y|}\right)dy 
&= \frac{4\pi}{\lambda^3}\int_{s_0}^{\infty}\frac{s^3}{(1+s)^4}\frac{1+\log(1+s)}{(\log(1+s))^4}ds\\
&\leq \frac{4\pi}{\lambda^3}\int_{s_0}^{\infty}\frac{(\log(1+s))^{-3} + (\log(1+s))^{-4} }{1+s}ds \\
& = \frac{4\pi}{\lambda^3}\left[ -\frac{1}{2}(\log(1+s))^{-2} -\frac{1}{3}(\log(1+s))^{-3} \right]_{s=s_0}^{s\to\infty} < \infty .
\end{align*}
So $a_1\in L^\infty(0,T; L^3(B_R))$. Since we already knew that $a_2\in L^\infty(0,T; L^3(B_R))$ and ${4\pi} a = a_1 + a_2$,
this implies, given also the arbitrariety of $R>0$, that $a\in L^\infty(0,T; L^3_{loc}(\R^3))$.

The proof that $\na a \in L^\infty(0,T; L^{3/2}_{loc}(\R^3))$ follows the same argument, the only difference being that the function
$f_{3/2}$ is to be employed in place of $f_3$. Therefore \eqref{a.reg.new} has been proved.

As a consequence of \eqref{a.reg.new}, we can integrate \eqref{landau.weak} by parts and deduce
by a density argument that the weak formulation \eqref{landau.w} holds.

Finally, $u\in W^{1,r}(0,T; W^{2,\frac{r}{r-1}}(\R^{3},\gamma^{-\frac{1}{3(r-1)}}(x)dx)')
\hookrightarrow C([0,T],W^{2,\frac{r}{r-1}}(\R^{3},\gamma^{-\frac{1}{3(r-1)}}(x)dx)')$, so the limit $\lim_{t\to 0}u(t)=u_0$ 
in $W^{2,\frac{r}{r-1}}(\R^{3},\gamma^{-\frac{1}{3(r-1)}}(x)dx)'$ follows.
This finishes the proof of Thr.~\ref{thr.ex}.

\begin{rem}
We point out that, while almost all the computations in the proof of Thr.~\ref{thr.ex} can be adapted to the case $d\geq 3$,
ineq.~\eqref{critical} becomes 
\begin{align*}
&\left( \int_\Bt a_\tau[\utau]^p \utau dx \right)^{1/p} \\ 
&\leq C_p \left(1 - (1+\Etau(t))^{d/2-1}D_\tau H^\tau[\utau(t)] \right)^{1/p}\left( \int_\Bt (1+|x|)^{d-2} u(x,t) dx \right)^{1/p} .
\end{align*}
If $d>4$ we cannot control the right-hand side of the above inequality by means of a power of $\Etau$.
This is the only reason why we have assumed $d=3$.
\end{rem}

\begin{proof}[Proof of Corollary~\ref{coro.ex}]
We repeat the calculations done in the proof of Section ``{\em Weak formulation}'' of {\em Step 3}. Since $|A[f]|\leq a[f]$
then $A[f]\in L^{\infty}(0,T; L^{3}_{loc}(\R^{3}))$. The proof of $\na a[f]\in L^{\infty}(0,T; L^{3/2}_{loc}(\R^{3}))$ is exactly the same.
Starting from the weak formulation in \cite[Corollary 1.1]{Des2015} we can integrate by parts and obtain that \eqref{landau.true.w}
holds true for $\phi$ smooth enough. A standard density argument shows
that the test functions can be chosen in the space $L^{\infty}(0,T; W^{1,\infty}_{c}(\R^{3}))$. This finishes the proof.
\end{proof}

%
%

\section{Proof of Theorem \ref{thr.reg}}\label{section_theorem2}

We will make use of the following two-weight Sobolev inequality:

\begin{lemma}[Weighted Sobolev inequality]\label{lem:Inequalities Sobolev weight aII}
Let $$a[u^{(\tau)}](x) = \int_{\Bt}\frac{u^{(\tau)}(y)}{{4\pi} |x-y|}dy,$$ with $u^{(\tau)}$ a solution to \eqref{landau.tau}. There exists an universal constant $C$ such that any smooth function $\phi$ satisfies 
    \begin{align*}
     \left ( \int_{I}\int \phi^q a[u^{(\tau)}]\;dxdt \right )^{2/q}& \leq C\left ( \int_{I} \int a[u^{(\tau)}] |\nabla \phi|^2 \;dxdt +  \sup \limits_{I} \int  \phi^{2}\;dx \right ),
    \end{align*}
    with
    \begin{align*}
    q \in \left( 1 , 2\left(1+\frac{2}{3}\right)\right). 
    \end{align*}
  \end{lemma}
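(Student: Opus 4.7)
The inequality is a weighted Ladyzhenskaya-type parabolic Sobolev estimate, in which the coefficient $a[\utau]$ plays the role of the diffusion weight and the exponent $q<10/3$ is the subcritical parabolic range in dimension three. My plan is to establish a spatial weighted Gagliardo--Nirenberg bound at each fixed time and then combine it, via temporal interpolation, with the $L^{\infty}(I;L^{2}_{x})$ control supplied by the sup term on the right-hand side.

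By density one reduces to smooth $\phi$. For $t\in I$ fixed, I would begin from the three-dimensional Sobolev embedding
\[
\|\phi\|_{L^{6}(\R^{3})}^{2} \leq C\bigl(\|\na\phi\|_{L^{2}(\R^{3})}^{2} + \|\phi\|_{L^{2}(\R^{3})}^{2}\bigr),
\]
combined with the interpolation $\|\phi\|_{L^{q}}^{q} \leq \|\phi\|_{L^{2}}^{q(1-\theta)}\|\phi\|_{L^{6}}^{q\theta}$, valid for $q\in(2,6)$ with $\theta=\frac{3(q-2)}{2q}$. The weight $a[\utau]$ is then inserted by applying H\"older's inequality to split $\int \phi^{q}a[\utau]\,dx$ into one factor matching an unweighted Sobolev expression and a complementary factor consisting of powers of $a[\utau]$ (or of $\phi^{2}a[\utau]$); the latter is controlled by the structural bounds on $a[\utau]$ from Step 2 of the proof of Thr.~\ref{thr.ex}, notably the pointwise lower bound $a[\utau](x)\geq c(1+|x|)^{-1}$ from \eqref{a_tau_bound_below} and the uniform weighted estimates $\|a[\utau]\utau\gamma^{1/3r}\|_{L^{r}(\Bt\times(0,T))}+\||\na a[\utau]|^{2}\gamma^{1/3r}\|_{L^{r}(\Bt\times(0,T))}\leq C$ from \eqref{est.a.ugly}. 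An equivalent route is to apply the interpolation directly to $\psi := \phi\sqrt{a[\utau]}$ and treat the unwanted term $\int \phi^{2}|\na a|^{2}/a\,dx$ via integration by parts using the Poisson identity $-\Delta a[\utau] = \utau$.

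After the spatial estimate is in hand, I would integrate in time over $I$ and apply H\"older in $t$ with exponents dictated by the relation $q\theta \leq 2 \iff q\leq 10/3$; the strict inequality $q<10/3$ leaves the slack needed for a concluding Young-inequality step which separates the final bound into the weighted Dirichlet part $\int_{I}\int a[\utau]|\na\phi|^{2}\,dx\,dt$ and the sup part $\sup_{I}\int\phi^{2}\,dx$. The main obstacle is the absence of a pointwise upper bound on $a[\utau]$: the resolution consists in exploiting the Poisson identity $-\Delta a[\utau]=\utau$ together with the uniform-in-$\tau$ weighted bounds recalled above, so that both the H\"older split and the subsequent time interpolation can be closed with constants independent of $\tau$ and of the particular solution.
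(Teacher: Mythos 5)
There is a genuine gap: your plan tries to deduce the estimate from the \emph{unweighted} Sobolev embedding $\|\phi\|_{L^{6}}^{2}\leq C(\|\na\phi\|_{L^{2}}^{2}+\|\phi\|_{L^{2}}^{2})$, but the right-hand side of the lemma only contains the degenerate Dirichlet energy $\int a[\utau]|\na\phi|^{2}dx$. Since the only available lower bound is $a[\utau](x)\geq c(1+|x|)^{-1}$, this energy controls $\int\gamma|\na\phi|^{2}dx$ but \emph{not} $\int|\na\phi|^{2}dx$, so the starting point of your argument is not dominated by the quantities you are allowed to use. Symmetrically, on the left-hand side the weight $a[\utau]$ must be controlled from \emph{above} against $\phi^{q}$ for an arbitrary smooth $\phi$ with a universal constant; a H\"older split whose complementary factor is estimated through \eqref{a_tau_bound_below} and \eqref{est.a.ugly} cannot do this, because \eqref{est.a.ugly} is a space--time $L^{r}$ bound (with $r$ barely above $1$) on the specific products $a_{\tau}[\utau]\utau$ and $|\na a_{\tau}[\utau]|^{2}$, not a bound on $a[\utau]$ paired with arbitrary test functions, and any attempt to absorb the complementary $\|\phi\|_{L^{qs'}}$ factor sends you back to the full unweighted Dirichlet energy you do not control. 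Your ``equivalent route'' via $\psi=\phi\sqrt{a[\utau]}$ fares no better: expanding $|\na\psi|^{2}$ produces $\int\phi^{2}|\na a|^{2}/a\,dx$, and using $|\na a|^{2}=\tfrac12\Delta(a^{2})+a\,u$ (or integrating by parts with $-\Delta a=u$) this generates exactly the term $\int u\phi^{2}dx$, whose absorption into $\varepsilon\int a|\na\phi|^{2}+C_{\varepsilon}\int\phi^{2}$ is precisely the $\varepsilon$-Poincar\'e inequality \eqref{eqn:epsilon_Poincare_inequality strongerII} --- the unproven \emph{conditional} hypothesis of Theorem \ref{thr.reg}. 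The lemma, however, is unconditional, so this route is circular within the logic of the paper.

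The idea you are missing is the one the paper actually uses: a two-weight Sobolev inequality of Chanillo--Wheeden/Sawyer--Wheeden type, \eqref{SWsobolev_weighted_ineq}, applied with $w_{2}=a[\utau]$ and $w_{1}=a^{m}[\utau]$, $m<3$. The specific convolution structure $a[\utau]=\frac{1}{4\pi}|\cdot|^{-1}*\utau$ with $\utau\geq0$ makes both $a$ and $a^{m}$ $\mathcal{A}_{1}$-weights (via the $\mathcal{A}_{1}$ property of $|x|^{-1}$ and Minkowski's inequality), and yields the reverse-H\"older bound \eqref{rev_Holder}, which is exactly what verifies the balance condition \eqref{cond_ineq} with a universal constant. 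This gives $\bigl(\int\phi^{2m}a^{m}dx\bigr)^{1/2m}\leq C\bigl(\int a|\na\phi|^{2}dx\bigr)^{1/2}$ --- the same degenerate weight on both sides, which no unweighted embedding can produce --- and the lemma then follows by the interpolation $\int\phi^{q}a\,dx\leq\bigl(\int\phi^{2m}a^{m}dx\bigr)^{1/m}\bigl(\int\phi^{2}dx\bigr)^{1-1/m}$ with $q=4-2/m<10/3$, integration in time, and pulling out $\sup_{I}\int\phi^{2}dx$. Only this last time-integration/interpolation step resembles the end of your proposal; the core spatial inequality requires the $\mathcal{A}_{1}$/two-weight machinery, not Gagliardo--Nirenberg plus H\"older.
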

  \begin{proof}
  The proof can be found in \cite[Section 4]{GG17} but we sketch it here for completeness.  
  
  We first recall a weighted inequality proven in \cite[Theorem 1.5]{CW2} and \cite[Theorem 1]{SW} , which states that any smooth function $\phi$ compactly supported in $Q \subset \R^3$ satisfies 
  \begin{align}\label{SWsobolev_weighted_ineq}
  \left( \int_{Q}\phi^qw_1\;dv\right)^{1/q}\le C\left( \int_{Q} |\nabla \phi|^pw_2\;dv\right)^{1/p},
  \end{align} 
   provided $1<p\le  q<+\infty$ and $w_1(v)$, $w_2(v)$ are $\mathcal{A}_1$-weights such that 
  \begin{align}\label{cond_ineq}
    \left ( \frac{|Q'|}{|Q|}\right )^{1/3}\left(\frac{\int_{Q'} w_1\;dv }{ \int_{Q} w_1\;dv  }\right)^{1/q}\le C\left(\frac{\int_{Q'} w_2\;dv }{ \int_{Q} w_2\;dv  }\right)^{1/p}   ,
  \end{align}
  for all cubes $Q'\subset  8Q$. A function $\omega(x)$ is an $\mathcal{A}_1$-weight if there exists a constant $C$ such that 
  $$
  \frac{1}{|Q|} \int \omega \;dx \le C \omega (x_0), \quad \textrm{for almost every $x_0$ in $Q$}.
  $$
   Functions of the form $\frac{1}{|x|^m}$ with $0\le m <3$ are $\mathcal{A}_1$-weights in $\R^3$. 

For our inequality we chose $w_1= a^m[u^{(\tau)}]$ and $w_2 = a[u^{(\tau)}]$ for some $m<3$. Since $\frac{1}{|w|}$ is $\mathcal{A}_1$ we have 
\begin{align*}
\frac{1}{|Q_r|} \int_{Q_r}  a[u^{(\tau)}] \;dx = \frac{c}{|Q_r|} \int_{\Bt} u^{(\tau)} \int_{Q_r}  \frac{1}{|w-y|}\;dwdy \le c  \int_{\Bt} \frac{u^{(\tau)}(y)}{|x-y|}\;dy  = c a[u^{(\tau)}] (x)
\end{align*}
for almost every $x \in Q_r$. We deduce that $a[u^{(\tau)}]$ is an $\mathcal{A}_1$-weight.  Similarly, Minkowski's inequality implies 
\begin{align*}
\left( \frac{1}{|Q_r|} \int_{Q_r}  a^m[u^{(\tau)}] \;dw\right)^{\frac{1}{m}} &\le c  \int_{\Bt} u^{(\tau)} \left( \frac{1}{|Q_r|} \int_{Q_r} \frac{1}{|w-y|^m}\;dw \right)^{\frac{1}{m}}dy\\
  &\le \int_{\Bt}\frac{ u^{(\tau)}(y)}{|x-y|}\;dy = C  a[u^{(\tau)}](x),
\end{align*}
for almost every $x \in Q_r$, with $C$ universal constant. Therefore $a^m[u^{(\tau)}]$ is also an $\mathcal{A}_1$-weight.  Taking the average over $Q_r$ on both sides of the last inequality we get 
\begin{align}\label{rev_Holder}
\left( \frac{1}{|Q_r|} \int_{Q_r}  a^m[u^{(\tau)}] \;dw\right)^{\frac{1}{m}} \le C \frac{1}{|Q_r|} \int_{Q_r}  a[u^{(\tau)}](x)\;dx.
\end{align}
 We now apply respectively H\"older's inequality and (\ref{rev_Holder}) in $Q'$ to get 
   \begin{align*}
    \int_{Q}a_{\tau}[u^{(\tau)}] \;dv & \leq |Q|^{1-\frac{1}{m}} \left ( \int_{Q}a^m[u^{(\tau)}]\;dv\right )^{\frac{1}{m}},\\	  
    \left ( \int_{Q'} a^m[u^{(\tau)}]\;dv\right )^{\frac{1}{m}} & \leq C|Q'|^{\frac{1}{m}-1}\int_{Q'} a[u^{(\tau)}]\;dv,
  \end{align*}
  with $C$ universal constant. Dividing the left hand side of the second inequality by the right hand side of the first inequality, we arrive at 
  \begin{align*}
    \left (\frac{\int_{Q'} a^m[u^{(\tau)}]\;dv}{\int_{Q}a^m[u^{(\tau)}]\;dv} \right )^{\frac{1}{m}} \leq C\left ( \frac{|Q'|}{|Q|}\right )^{\frac{1}{m}-1}\frac{\int_{Q'}a[u^{(\tau)}]\;dv}{\int_{Q}a[u^{(\tau)}]\;dv}.
  \end{align*}
  Taking the square root on both sides, and multiplying by $(|Q'|/|Q|)^{\frac{1}{3}}$ leads to
  \begin{align*}
    \nonumber\left ( \frac{|Q'|}{|Q|}\right )^{\frac{1}{3}}\left (\frac{\int_{Q'} a^m[u^{(\tau)}]\;dv}{\int_{Q}a^m[u^{(\tau)}] \;dv} \right )^{\frac{1}{2m}}&  \leq C\left ( \frac{|Q'|}{|Q|}\right )^{\frac{1}{3}+\frac{1-m}{2m}}\left ( \frac{\int_{Q'}a[u^{(\tau)}]\;dv}{\int_{Q}a[u^{(\tau)}]\;dv}\right )^{\frac{1}{2}}\\
   & \leq C\left ( \frac{\int_{Q'}a[u^{(\tau)}]\;dv}{\int_{Q}a[u^{(\tau)}]\;dv}\right )^{\frac{1}{2}} \le C
  \end{align*}
  since 
  $$\frac{1}{3}+\frac{1-m}{2m} > 0, \quad \textrm{for} \quad m < 3.$$
  Then, we may apply \eqref{SWsobolev_weighted_ineq} in the case where $p=2$, $q=2m$ with $m<3$  and get 
   \begin{align*}
  \left( \int_{Q}\phi^{2m} a^m[u^{(\tau)}] \;dv\right)^{1/2m}\le C\left( \int_{Q} a[u^{(\tau)}] |\nabla \phi|^2 \;dv\right)^{1/2}.
  \end{align*}   
  Since the constant $C$ does not depend on $Q$, density's argument allows to write 
    \begin{align*}
  \left( \int \phi^{2m} a^m[u^{(\tau)}] \;dv\right)^{1/2m}\le C\left( \int  a[u^{(\tau)}] |\nabla \phi|^2 \;dv\right)^{1/2},
  \end{align*}   
  for any function $\phi$ defined in $\R^3$. 
  For $q = 4-\frac{2}{m}$ interpolation yields
  \begin{align*}
    \int \phi^q a[u^{(\tau)}]\;dv \leq \left (  \int  \phi^{2m} a^m[u^{(\tau)}] \;dv\right )^{\frac{1}{m}} \left (\int \phi^2\;dv \right )^{1-\frac{1}{m}}.
  \end{align*}
   We now integrate in the time interval $I$ and use the above estimate to get 
  \begin{align*}
    \int_I\int \phi^q a[u^{(\tau)}]\;dvdt & \leq \sup \limits_{I} \left(\int \phi^{2}\;dv \right )^{1-\frac{1}{m}} \int_{I}\left (  \int  \phi^{2m} a^m[u^{(\tau)}] \;dv \right )^{\frac{1}{m}}\;dt \\
     & \leq C \sup \limits_{I} \left(\int \phi^{2}\;dv \right )^{1-\frac{1}{m}}  \int_{I}\int a[u^{(\tau)}] |\nabla \phi|^2 \;dvdt, 
    \end{align*}
  which implies
    \begin{align*}
    \int_{I}  \int\phi^q a[u^{(\tau)}]\;dvdt & \leq C\left ( \int_{I} \int a[u^{(\tau)}] |\nabla \phi|^2 \;dvdt + \sup \limits_{I} \int \phi^{2}\;dv \right )^{\frac{q}{2}}.
    \end{align*}  
    Since $m<3$ we have that $q<2+ \frac{4}{3}$.

  \end{proof}

Theorem ~\ref{thr.reg} will be shown for piecewise constant in time solutions $\utau$ to the discretized problem \eqref{landau.tau};
then the a.e. convergence of $\utau$, $a[\utau]$ will yield the statement for solutions to \eqref{landau} satisfying the properties stated in Theorem ~\ref{thr.ex}.
 
We assume that the estimates for $\utau$ derived in the previous section are satisfied. In particular,
we assume that $c_1\leq\int_\Bt \utau(x,t) dx\leq c_2$ and $\int_\Bt |x|^2 \utau(x,t) dx\leq c_3$,
$t\in [0,T]$, for suitable positive constants $c_1, c_2, c_3$ which do not depend on $\tau$;
that $\sqrt{\utau}$ is uniformely bounded in $L^2(0,T; H^1(\Bt,\gamma(x)dx))$; and that \eqref{June20.bounds.u} holds. 
We also assume that $\utau\in L^\infty(0,T; W^{1,4}(\Bt))$, although this bound is not uniform in $\tau$.
Finally, for the sake of simplicity we write $u\equiv\utau$ and $a\equiv a[\utau]$ .

We also recall that $D_\tau$ is the backward discrete time derivative defined in \eqref{Dtau.def}.

   \begin{prop}\label{prop:energy_identity} The following inequality holds:
    \begin{align}\label{est3-1}
      & D_\tau\int \eta^2 u^p\;dx + \frac{4(p-1)}{p}\int a|\nabla (\eta u^{p/2})|^2\;dx {+\frac{p(p-1)\tau}{2}\int \frac{u^{p-2}}{u^3}|\na u|^4 \eta^2 dx}\\
      &\qquad\leq \textnormal{(I)} + \textnormal{(II)} 
      {+ C\tau\int \eta^2 u^p\; dx + C(p)\tau\int \left(1+\frac{|\na\eta|^{4p}}{\eta^{4p}}\right)\eta^2 dx},\nonumber
    \end{align}
    where 
    \begin{align*}		  
      \textnormal{(I)} & := \frac{4(p-2)}{p}\int u^{p/2}(a\nabla (\eta u^{p/2}),\nabla \eta)\;dx+\frac{4}{p}\int u^{p}(a\nabla \eta,\nabla \eta)\;dx,\\
  	\textnormal{(II)} & := \int u^p (\nabla a,\nabla (\eta^2))\;dx+(p-1)\int u\eta^2 u^p\;dx.
    \end{align*}   
  \end{prop}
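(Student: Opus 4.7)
The plan is to test the weak formulation \eqref{landau.tau} (read in its piecewise-constant form, so that $D_\tau$ replaces $\partial_t$) against the test function
\[
\phi = \eta^2 u^{p-1},
\]
and then rearrange the four resulting terms (discrete time derivative, $a\nabla u$ diffusion, $u\nabla a$ drift, and the two $p$-Laplacian regularizations) to match the right-hand side of \eqref{est3-1}. At the very end I multiply through by $p$ so that $D_\tau\int\eta^2 u^p\,dx$ appears with unit coefficient, which is what produces the factors $\frac{4(p-1)}{p}$ and $\frac{p(p-1)}{2}$ on the left.

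\emph{Time term and diffusion term.} For the discrete time derivative I use the convexity of $s\mapsto s^p/p$ for $p\geq 1$, i.e.\ the subgradient inequality $u^{p-1}(u-u^{k-1})\geq \tfrac{1}{p}(u^p-(u^{k-1})^p)$, to obtain $\int\eta^2 u^{p-1}D_\tau u\,dx\geq \tfrac{1}{p}D_\tau\int\eta^2 u^p\,dx$. For the diffusion term I expand
\[
\nabla(\eta^2 u^{p-1}) = 2\eta u^{p-1}\nabla\eta + (p-1)\eta^2 u^{p-2}\nabla u
\]
and use the product-rule identity $\eta^2|\nabla u^{p/2}|^2 = |\nabla(\eta u^{p/2})|^2 - u^p|\nabla\eta|^2 - p\,\eta u^{p-1}\nabla\eta\cdot\nabla u$, together with $\eta^2 u^{p-2}|\nabla u|^2 = \tfrac{4}{p^2}\eta^2|\nabla u^{p/2}|^2$. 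This produces the main coercive quantity $\tfrac{4(p-1)}{p^2}\int a|\nabla(\eta u^{p/2})|^2\,dx$ plus two correction terms that, after multiplying the whole identity by $p$, coincide exactly with (I).

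\emph{Drift term.} For the $-u\nabla a$ piece I split $u\nabla a\cdot\nabla(\eta^2 u^{p-1}) = u^p\nabla a\cdot\nabla(\eta^2) + \tfrac{p-1}{p}\eta^2\nabla a\cdot\nabla u^p$, integrate by parts in the second summand, and use the Poisson relation $-\Delta a = u$ (valid inside $\Bt$, where $\eta$ is supported) to convert $-\int u^p\eta^2\Delta a\,dx$ into $\int \eta^2 u^{p+1}\,dx$. After multiplying by $p$ this yields $\int u^p(\nabla a,\nabla(\eta^2))\,dx + (p-1)\int\eta^2 u^{p+1}\,dx$, which is precisely (II).

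\emph{Regularization terms.} The $|w|^2 w$ contribution produces $p\tau\int|\log u|^3 u^{p-1}\eta^2\,dx$, which is absorbed into $C\tau\int\eta^2 u^p\,dx + C\tau\int\eta^2\,dx$ using $|\log u|^3 u^{p-1}\leq C_p(u^p+1)$. The $p$-Laplacian-type term $\tau\int|\nabla w|^2\nabla w\cdot\nabla\phi\,dx$ decomposes, via $\nabla w = \nabla u/u$, into the \emph{good} contribution $\tau(p-1)\int \eta^2 u^{p-5}|\nabla u|^4\,dx$ (the source of the LHS term $\tfrac{p(p-1)\tau}{2}\int\frac{u^{p-2}}{u^3}|\nabla u|^4\eta^2\,dx$ after multiplication by $p$) and the cross term $2\tau\int \eta u^{p-4}|\nabla u|^2\nabla u\cdot\nabla\eta\,dx$. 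I control the cross term by the weighted Young inequality with exponents $4/3$ and $4$,
\[
\eta u^{p-4}|\nabla u|^3|\nabla\eta| \leq \lambda\,\tfrac{3}{4}\,\eta^2 u^{p-5}|\nabla u|^4 + \tfrac{1}{4\lambda^3}\,\eta^{-2}u^{p-1}|\nabla\eta|^4,
\]
choosing $\lambda$ small enough so that $2p\tau\cdot\tfrac{3\lambda}{4}\leq \tfrac{p(p-1)\tau}{2}$. The leftover factor $\eta^{-2}u^{p-1}|\nabla\eta|^4$ is then bounded by $\tfrac{p-1}{p}\eta^2 u^p + \tfrac{1}{p}\eta^2(|\nabla\eta|/\eta)^{4p}$ via Young with exponents $p/(p-1)$ and $p$, matching the error term $C\tau\int\eta^2 u^p\,dx + C(p)\tau\int(1+|\nabla\eta|^{4p}/\eta^{4p})\eta^2\,dx$ in \eqref{est3-1}.

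\emph{Main obstacle.} The only delicate book-keeping is in the $p$-Laplacian piece: the cross term involves $|\nabla u|^3$, which has the wrong homogeneity to be absorbed by either of the two natural good quantities ($|\nabla(\eta u^{p/2})|^2$ or $\eta^2 u^{p-5}|\nabla u|^4$). The key observation is that it has \emph{exactly} the homogeneity to interpolate, via a one-parameter Young inequality, into $3/4$ of the $|\nabla u|^4$ good term and a remainder of the specific form $\eta^{-2}u^{p-1}|\nabla\eta|^4$, whose $u$-part is subcritical relative to the drift quantity $\eta^2 u^{p+1}$ already present in (II). Everything else is a direct identity computation.
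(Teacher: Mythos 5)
Your proposal is correct and follows essentially the same route as the paper: the test function $p\,\eta^2u^{p-1}$ (you insert the factor $p$ at the end), convexity of $s\mapsto s^p$ for the discrete time derivative, the same algebraic identities recombining the diffusion term into $\frac{4(p-1)}{p}\int a|\nabla(\eta u^{p/2})|^2$ plus (I), integration by parts together with $-\Delta a=u$ for (II), and the same two-step Young inequality to absorb the $p$-Laplacian cross term into half of the $|\nabla u|^4$ good term plus the $\eta^{2-4p}|\nabla\eta|^{4p}$ remainder. The only cosmetic deviations are that you bound the cubic term via $|\log u|^3u^{p-1}\le C_p(u^p+1)$ instead of exploiting its sign on $\{u>1\}$ as the paper does, and your choice of Young parameters makes the constant in front of $\tau\int\eta^2u^p\,dx$ depend on $p$ (the statement writes it as a $p$-independent $C$), neither of which affects the later Moser iteration.
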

  \begin{proof}
    Consider 
    $$
\psi= p  \; \eta^2\; u^{p-1}
 $$
 as test function for (\ref{landau}). Since $s\mapsto s^p$ is convex, a direct computation yields, 
    \begin{align*}
D_\tau\int \eta^2 u^p\;dx & \leq p \int \eta^2 u^{p-1} D_\tau u\;dx \\
        & = - p\int (a\nabla u,\nabla (\eta^2 u^{p-1}))\;dx
        +p \int (u\nabla a,\nabla (\eta^2 u^{p-1}))\;dx\\
	&~{-p\tau\int ( (\log u)^3\eta^2 u^{p-1} + |\na\log u|^2\na\log u\cdot\na(\eta^2 u^{p-1}) )dx}\\
         &= \widetilde{\textnormal{(I)}} + \textnormal{(II)} {+\textnormal{(III)}}.
    \end{align*}
    Expanding the first integral, we have the expression:
    \begin{align*}
    \int (a\nabla u,\nabla (\eta^2 u^{p-1}))\;dx =   \int (p-1) \eta^2 u^{p-2}(a\nabla u,\nabla u)+2 u^{p-1}\eta(a\nabla u,\nabla \eta)\;dx.
    \end{align*}	  
    Let us rewrite this expression in a more convenient form. Note the elementary identity
    \begin{align*}
      (a\nabla (\eta u^{p/2}),\nabla (\eta u^{p/2}))  
      & = \frac{p^2}{4} u^{p-2}\eta^2 (a\nabla u,\nabla u)+p\eta u^{p-1}(a\nabla u,\nabla \eta)+u^{p}(a\nabla \eta,\nabla \eta),
    \end{align*}
    and use it to write,
    \begin{align*}		
      & (p-1)\eta^2 u^{p-2}(a\nabla u,\nabla u)+2u^{p-1}\eta (a\nabla u,\nabla \eta) \\
      & = \frac{4(p-1)}{p^2}(a\nabla (\eta u^{p/2}),\nabla (\eta u^{p/2}))\\
      & \;\;\;\; - \frac{(2p-4)}{p} u^{p-1}\eta(a\nabla u,\nabla \eta)-\frac{4(p-1)}{p^2} u^p(a\nabla \eta,\nabla \eta).	  	
    \end{align*}
    Further, another elementary identity says
    \begin{align*}	
      u^{p-1}\eta (a\nabla u,\nabla \eta)  
	  = \frac{2}{p} u^{p/2} (a  \nabla (\eta u^{p/2}),\nabla \eta )-\frac{2}{p}u^p (a\nabla \eta,\nabla \eta). 		
    \end{align*}		
    Combining the above, it follows that
    \begin{align*}	
      & (p-1)\eta^2 u^{p-2}(a\nabla u,\nabla u)+2u^{p-1}\eta (a\nabla u,\nabla \eta)\\
      & = \frac{4(p-1)}{p^2}(a\nabla (\eta u^{p/2}),\nabla (\eta u^{p/2}))\\	
      & \;\;\;\;-\frac{4(p-2)}{p^2}u^{p/2}(a\nabla (\eta u^{p/2}),\nabla \eta)-\frac{4}{p^2}u^p(a\nabla \eta,\nabla \eta).
    \end{align*}	
    In particular,
    \begin{align*}	
   \widetilde{\textnormal{(I)}}=    & -\frac{4(p-1)}{p}\int (a\nabla (\eta u^{p/2}),\nabla (\eta u^{p/2}))\;dx\\
      & +\frac{4(p-2)}{p}\int u^{p/2}(a\nabla (\eta u^{p/2}),\nabla \eta)\;dx+\frac{4}{p}\int u^{p}(a\nabla \eta,\nabla \eta)\;dx.
    \end{align*}	
    Thus,
    \begin{align*}	
      & \frac{d}{dt}\int \eta^2 u^p\;dx + \frac{4(p-1)}{p}\int (a\nabla (\eta u^{p/2}),\nabla (\eta u^{p/2}))\;dx \\
      & = \frac{4(p-2)}{p}\int u^{p/2}(a\nabla (\eta u^{p/2}),\nabla \eta)\;dx+\frac{4}{p}\int u^{p}(a\nabla \eta,\nabla \eta)\;dx+ p\int (u\nabla a,\nabla (\eta^2u^{p-1} ) )\;dx.
    \end{align*}	
    We now analyze ${\textnormal{(II)}}$. Since
    \begin{align*}
      (\nabla a,u\nabla (\eta^2 u^{p-1})) & =  uu^{p-1}(\nabla a,\nabla (\eta^2))+ (p-1)uu^{p-2}\eta^2(\nabla a,\nabla u)\\
      & = uu^{p-1}(\nabla a,\nabla (\eta^2))+(p-1)( u^{p-1})\eta^2 (\nabla a,\nabla u)\\
      & = (u^p)(\nabla a,\nabla (\eta^2))+\eta^2 (\nabla a,\nabla ( \frac{p-1}{p}u^p) ),
    \end{align*}
    it follows that
    \begin{align*}
      \textnormal{(II)}  & = p\int (u^p )(\nabla a,\nabla (\eta^2))\;dx -p\int \left ( \frac{p-1}{p}u^{p}\right )\dive(\eta^2\nabla a)\;dx.
    \end{align*}		
    {From the above inequality and the Poisson equation it follows}
    \begin{align*}			  
  	\textnormal{(II)} & = p\int (u^p) (\nabla a,\nabla (\eta^2))\;dx-\int ((p-1)u^p) (\nabla a,\nabla (\eta^2))\;dx\\
        & \;\;\;\;+\int u \eta^2 ((p-1)u^p) \;dx\\
        & = \int u^p (\nabla a,\nabla (\eta^2))\;dx+\int u\eta^2 \left ( (p-1)u^p \right )\;dx.
    \end{align*}	  
    
    
Let us now consider the third integral. It holds
\begin{align*}
\mbox{(III)} &= -p\tau\int ( (\log u)^3\eta^2 u^{p-1} + |\na\log u|^2\na\log u\cdot\na(\eta^2 u^{p-1}) )dx\\
&=  -p(p-1)\tau\int \frac{u^{p-2}}{u^3}|\na u|^4 \eta^2 dx 
- p\tau\int u^{p-1}|\na\log u|^2\na\log u\cdot \na(\eta^2) dx\\
&\qquad -p\tau\int ( (\log u)^3\eta^2 u^{p-1}dx .
\end{align*}
The second integral of $\mbox{(III)}$ can be estimated as 
\begin{align*}
& - p\tau\int u^{p-1}|\na\log u|^2\na\log u\cdot \na(\eta^2) dx\\
&\leq p\tau\int \frac{u^{p-1}}{u^3}\frac{|\na u|^3}{u^{3/4}}\eta^{3/2}\cdot u^{3/4}|\na(\eta^2)|\eta^{-3/2} dx\\
& \leq \frac{p(p-1)\tau}{2}\int \frac{u^{p-2}}{u^3}|\na u|^4 \eta^2 dx + 
\frac{p\tau}{2(p-1)}\int \frac{u^{p+2}}{u^3} \frac{|\na(\eta^2)|^4}{\eta^6} dx\\
& \leq \frac{p(p-1)\tau}{2}\int \frac{u^{p-2}}{u^3}|\na u|^4 \eta^2 dx + 
\frac{C p\tau}{p-1}\int u^{p-1} \frac{|\na\eta|^4}{\eta^2} dx .
\end{align*}
By applying Young inequality to bound the second integral we get
\begin{align*}
\int u^{p-1} \frac{|\na\eta|^4}{\eta^2} dx 
&= \int u^{p-1}\eta^{2(p-1)/p} \cdot \frac{|\na\eta|^4}{\eta^2}\eta^{-2(p-1)/p} dx\\
&\leq \frac{p-1}{p}\int u^p\eta^2 dx + \frac{1}{p}\int |\na\eta|^{4p}\eta^{2-4p}dx .
\end{align*}
Moreover, 
\begin{align*}
-p\tau\int  (\log u)^3\eta^2 u^{p-1}dx &\leq -p\tau\int_{\{u\leq 1\}} (\log u)^3\eta^2 u^{p-1}dx
\leq C p \tau\int\eta^2 dx ,
\end{align*}
where $C \equiv \sup_{0<s\leq 1}s^{p-1} (\log(1/s))^3 < \infty$ with $p>1$. 
Therefore we conclude
\begin{align*}
\mbox{(III)} &\leq -\frac{p(p-1)\tau}{2}\int \frac{u^{p-2}}{u^3}|\na u|^4 \eta^2 dx + {{ C\tau\int u^p\eta^2 dx 
+C(p)\tau\int \left(1+\frac{|\na\eta|^{4p}}{\eta^{4p}}\right)\eta^2 dx}} .
\end{align*}

This finishes the proof of the Lemma.
  \end{proof}	

\begin{lemma} \label{lemma:2}
Let $p>1$, then we have the inequality
    \begin{align*}
      & \frac{d}{dt}\int \eta^2 u^p\;dx + \frac{(p-1)}{p}\int a|\nabla (\eta u^{p/2})|^2\;dx\\
      & \leq {{(p-1)\int \eta^2 u^{p+1}\;dx}} \\
      & \;\;\;\;+C(p) \int u^p (a\nabla \eta,\nabla \eta) \;dx-\int u^p \eta \textrm{Tr}\;(aD^2\eta))\;dx\\
&   \;\;\;\;    +{{ C\tau\int u^p\eta^2 dx 
 +C(p)\tau\int \left(1+\frac{|\na\eta|^{4p}}{\eta^{4p}}\right)\eta^2 dx}} ,
         \end{align*}
    where $C(p)$ denotes a constant that is bounded when $p>1$.
  \end{lemma}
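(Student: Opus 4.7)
The plan is to start from the energy identity of Proposition~\ref{prop:energy_identity} and to reduce the coefficient $4(p-1)/p$ in front of the dissipation $\int a|\na(\eta u^{p/2})|^{2}dx$ to $(p-1)/p$, by combining (I) with the term $\int u^{p}(\na a,\na(\eta^{2}))dx$ from (II) via integration by parts and Young's inequality. The other piece of (II), namely $(p-1)\int u\eta^{2}u^{p}dx$, already equals $(p-1)\int\eta^{2}u^{p+1}dx$ as desired, and the non-negative $p$-Laplacian type contribution $\frac{p(p-1)\tau}{2}\int u^{p-2}|\na u|^{4}/u^{3}\,\eta^{2}dx$ sitting on the left-hand side can simply be dropped.

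The key step is to integrate by parts in $\int u^{p}\na a\cdot\na(\eta^{2})\,dx$ using the Poisson equation $-\Delta a = u$. Writing $\na a\cdot\na(\eta^{2}) = \Div(a\na(\eta^{2})) - a\Delta(\eta^{2})$ yields
\begin{align*}
\int u^{p}\na a\cdot\na(\eta^{2})\,dx = -\int a\,\na(u^{p})\cdot\na(\eta^{2})\,dx - \int u^{p}a\,\Delta(\eta^{2})\,dx.
\end{align*}
Combining $\na(u^{p}) = pu^{p-1}\na u$ with the identity $pu^{p-1}\eta\na u = 2u^{p/2}(\na(\eta u^{p/2}) - u^{p/2}\na\eta)$ transforms the first integral on the right into $-4\int a\,u^{p/2}\na(\eta u^{p/2})\cdot\na\eta\,dx + 4\int a u^{p}|\na\eta|^{2}dx$, while $\Delta(\eta^{2}) = 2|\na\eta|^{2} + 2\eta\Delta\eta$ produces $-2\int u^{p}a|\na\eta|^{2}dx - 2\int u^{p}a\eta\Delta\eta\,dx$.

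Adding this to (I), the cross-terms combine into $-\frac{8}{p}\int a\,u^{p/2}\na(\eta u^{p/2})\cdot\na\eta\,dx$, owing to the arithmetic $\frac{4(p-2)}{p} - 4 = -\frac{8}{p}$. Applying Young's inequality in the form $\frac{8}{p}|AB|\leq \frac{3(p-1)}{p}A^{2} + \frac{16}{3p(p-1)}B^{2}$ with $A = a^{1/2}|\na(\eta u^{p/2})|$ and $B = a^{1/2}u^{p/2}|\na\eta|$ absorbs exactly three quarters of the dissipation on the left, leaving the desired $\frac{(p-1)}{p}\int a|\na(\eta u^{p/2})|^{2}dx$. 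All remaining $|\na\eta|^{2}$-type contributions (from (I), from the integration by parts, and from Young) coalesce into a single term $C(p)\int u^{p}(a\na\eta,\na\eta)\,dx$, while the $\eta\Delta\eta$-piece gives $-\int u^{p}\eta\,\textrm{Tr}(aD^{2}\eta)\,dx$ since $a$ is a scalar so $\textrm{Tr}(aD^{2}\eta) = a\Delta\eta$ (an extra constant factor is absorbed into $C(p)$). The $\tau$-dependent terms from Proposition~\ref{prop:energy_identity} are carried over unchanged.

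The main obstacle is the book-keeping of coefficients: one must verify that the cross-term $\frac{4(p-2)}{p}\int u^{p/2}(a\na(\eta u^{p/2}),\na\eta)\,dx$ already present in (I) and the cross-term $-4\int a u^{p/2}\na(\eta u^{p/2})\cdot\na\eta\,dx$ produced by the integration by parts have the \emph{right} sign to add up to $-\frac{8}{p}$, so that a single application of Young's inequality with Young parameter $\delta = 3(p-1)/4$ leaves the exact coefficient $(p-1)/p$ on the left-hand side and a constant $C(p)$ (which degenerates as $p\to 1^{+}$) on the right. Once the algebra is arranged correctly, the remaining bounds are automatic.
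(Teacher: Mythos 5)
Your proof is correct and follows essentially the same route as the paper: integrate $\int u^p\na a\cdot\na(\eta^2)\,dx$ by parts via the Poisson/divergence structure, rewrite the cross terms through $\eta\na u^{p/2}=\na(\eta u^{p/2})-u^{p/2}\na\eta$, and absorb them into the dissipation --- you merely merge the two cross terms (coefficient $-8/p$) and apply a single Young inequality with parameter $\tfrac{3(p-1)}{p}$, whereas the paper treats them separately (Cauchy--Schwarz absorbing $\tfrac{2(p-1)}{p}$, then Young with $\varepsilon<\tfrac{p-1}{2p}$), which is the same computation up to bookkeeping. The only cosmetic mismatch is that you land on $-2\int u^p a\,\eta\Delta\eta\,dx$ rather than the stated $-\int u^p\eta\,\mathrm{Tr}(aD^2\eta)\,dx$, but the paper's own proof produces the analogous term $-\int u^p\,\mathrm{Tr}(aD^2(\eta^2))\,dx$, and since only $\int a u^p\eta|\Delta\eta|\,dx$ is used afterwards the factor is immaterial.
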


  \begin{proof}	
    We proceed to bound from above the first term $\textnormal{(I)}$ and the first term of $\textnormal{(II)}$ resulting from Proposition \ref{prop:energy_identity}.  The aim is to estimate these terms as 
    \begin{align*}
     \frac{4(p-2)}{p}\int u^{p/2}(a\nabla (\eta u^{p/2},\nabla \eta)\;dx& +\int u^p (\nabla a,\nabla (\eta^2))\;dx  \\
     \le & c_1 \int (a\nabla (\eta u^{p/2}),\nabla (\eta u^{p/2}))\;dx + \textrm{\em lower order terms,}
\end{align*}
    where $c_1 < \frac{4(p-1)}{p}$.
   For the first term we use Cauchy-Schwarz inequality 
    \begin{align}
      & \left | \frac{4(p-2)}{p}(a\nabla (\eta u^{p/2}),u^{p/2}\nabla \eta) \right | \nonumber \\
      & \leq \frac{2(p-1)}{p}(a\nabla (\eta u^{p/2}),\nabla (\eta u^{p/2}))+ \frac{2(p-2)^2}{p(p-1)}u^p(a\nabla \eta,\nabla \eta). \label{est2-1}
    \end{align}			
    For the first term in $\textnormal{(II)}$ we use the identity
    \begin{align*}
      \dive(a u^p \nabla (\eta^2)) = a \dive(u^p\nabla (\eta^2))+u^p(\nabla a,\nabla (\eta^2)),
    \end{align*} 
    and conclude that
    \begin{align*}
      \int u^p (\nabla a,\nabla (\eta^{2}))\;dx & = -\int a \dive(u^p\nabla (\eta^2))\;dx\\
       & = -\int a u^p \Delta (\eta^2)\;dx- \int (a\nabla u^p,\nabla \eta^2)\;dx.
    \end{align*}
    Since
    \begin{align*}
       \eta \nabla u^{p/2} = \nabla (\eta u^{p/2})- u^{p/2}\nabla \eta,	
    \end{align*}
    {Young's inequality} yields
    \begin{align*}
       &{-}\int (a \nabla u^p,\nabla \eta^2)\;dx ~ {= -4\int u^{p/2}(a\eta\na u^{p/2}, \nabla\eta)}\\
       &\qquad = {-}4\int u^{p/2}(a \nabla (\eta u^{p/2}),\nabla \eta)\; dx +4\int u^p (a\nabla \eta,\nabla \eta)\;dx \\
       &\qquad \leq 2\varepsilon\int (a\nabla (\eta u^{p/2}),\nabla (\eta u^{p/2}) )\;dx + \left(\frac{2}{\varepsilon}+4\right)\int u^{p}(a\nabla \eta,\nabla\eta)\;dx .
    \end{align*}
    Thus
    \begin{align}\label{est1-1}
      \int u^p (\nabla a,\nabla (\eta^{2}))\;dx  \le &-\int u^p \textrm{Tr} (a D^2(\eta^2))\;dx +  2\varepsilon\int (a\nabla (\eta u^{p/2}),\nabla (\eta u^{p/2}) )\;dx \nonumber\\
    &  + \left(\frac{2}{\varepsilon}+4\right)\int u^{p}(a\nabla \eta,\nabla\eta)\;dx.
      \end{align}
  Substituting (\ref{est1-1}) and  (\ref{est2-1}) into  (\ref{est3-1}) we get 
   \begin{align*}
       \frac{d}{dt}\int \eta^2 u^p\;dx &+ \frac{(p-1)}{p} \int (a\nabla (\eta u^{p/2}),\nabla (\eta u^{p/2}))\;dx \\
      \le &C(p)  \int u^{p}(a\nabla \eta,\nabla \eta)\;dx+(p-1)\int \eta^2 u u^p\;dx -\int u^p \textrm{Tr} (a D^2(\eta^2))\;dx,
    \end{align*}   
by choosing $\varepsilon < \frac{p-1}{2p}$.	
  \end{proof}

\begin{lemma}\label{lemma3}
 We have
\begin{align*}
(p-1)\int_t^T \int \eta^2 u^{p+1}\;dxds  \le & \; \varepsilon (p-1)\int_t^T \int  a |\nabla(\eta u^{p/2})|^2 \;dxds + C(\varepsilon,p)\int_t^T\int	\eta^2 u^{p}\;dxds.
\end{align*}
\end{lemma}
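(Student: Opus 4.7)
The statement is a straightforward consequence of the $\varepsilon$-Poincaré inequality (\ref{eqn:epsilon_Poincare_inequality strongerII}), which is the standing assumption of Theorem \ref{thr.reg}. The plan is therefore to select the correct test function and then integrate in time.

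\textbf{Step 1: Choice of test function.} Apply (\ref{eqn:epsilon_Poincare_inequality strongerII}) with $\phi \coloneqq \eta u^{p/2}$, where $\eta$ is the spatial cut-off of the previous lemmas. Then $\phi^2 = \eta^2 u^p$, so $u\phi^2 = \eta^2 u^{p+1}$, and the inequality becomes, pointwise in time,
\begin{align*}
\int_{\R^3} \eta^2 u^{p+1}\, dx \;\le\; \varepsilon \int_{\R^3} a[u]\, |\nabla(\eta u^{p/2})|^2\, dx \;+\; C_\varepsilon \int_{\R^3} \eta^2 u^p\, dx.
\end{align*}

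\textbf{Step 2: Integrate in time and multiply by $(p-1)$.} Integrating the previous inequality on $[t,T]$ and multiplying through by $(p-1)>0$ yields exactly
\begin{align*}
(p-1)\!\int_t^T\!\!\int \eta^2 u^{p+1}\, dx\, ds \;\le\; \varepsilon (p-1)\!\int_t^T\!\!\int a\, |\nabla(\eta u^{p/2})|^2\, dx\, ds \;+\; C(\varepsilon,p)\!\int_t^T\!\!\int \eta^2 u^p\, dx\, ds,
\end{align*}
with $C(\varepsilon,p) = (p-1)C_\varepsilon$. This is the desired bound; the parameter $\varepsilon$ is free and will later be taken small enough so that the gradient term on the right-hand side can be absorbed into the corresponding term of Lemma \ref{lemma:2}.

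\textbf{Step 3: Admissibility check.} The only thing to verify is that $\eta u^{p/2}$ lies in the class for which (\ref{eqn:epsilon_Poincare_inequality strongerII}) is applicable, i.e.\ that the right-hand side is finite. Since $\eta$ is smooth and compactly supported, and $u=\utau\in L^\infty(0,T; W^{1,4}(\Bt))$ (as assumed in the opening paragraph of this section) together with the uniform bounds $\sqrt{\utau}\in L^2(0,T;H^1(\Bt,\gamma\, dx))$ and the $L^\infty$-$L^3_{\rm loc}$ bound on $a[\utau]$ obtained in the proof of Theorem \ref{thr.ex}, both integrals $\int a|\nabla(\eta u^{p/2})|^2\,dx$ and $\int \eta^2 u^p\,dx$ are finite for a.e.\ time. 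Thus the $\varepsilon$-Poincaré inequality applies legitimately, and there is no further obstacle: the entire lemma is a one-line application of (\ref{eqn:epsilon_Poincare_inequality strongerII}).
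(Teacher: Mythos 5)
Your proposal is correct and matches the paper's own argument: the paper likewise applies the $\varepsilon$-Poincar\'e inequality \eqref{eqn:epsilon_Poincare_inequality strongerII} with $\phi=\eta u^{p/2}$ and then integrates in time, the factor $(p-1)$ being absorbed into the constant. Your additional admissibility check in Step 3 is a harmless (and reasonable) elaboration the paper leaves implicit.
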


\begin{proof}
We use here the $\varepsilon$-Poincare's inequality (\ref{eqn:epsilon_Poincare_inequality strongerII}) with 
$$
\phi = \eta u^{p/2}
$$
and get 
 \begin{align*}
  \begin{array}{l}
	 \int \eta^2 u^{p+1}\;dx \leq \varepsilon \int a |\nabla(\eta u^{p/2})|^2 \;dx + C(\varepsilon)\int 	\eta^2 u^{p}\;dx.
  \end{array}	  
\end{align*}


\end{proof}

\begin{cor}  \label{cor_iterat}
  Fix times $0<T_1<T_2<T_3<T$, $p>1$ and a cut-off function $\eta(v)$. Then, we have the following inequality
  \begin{align*}
   \sup \limits_{T_2 \leq t\leq T_3} \left \{ \int (\eta u^{p/2})^2\;dx\right\} &+ \frac{(p-1)}{4p}  \int_{T_2}^{T_3}\int a|\nabla (\eta u^{p/2})|^2\;dxdt\\
      \le  &\; \left(\frac{1}{T_2-T_1} + C(p,\varepsilon)\right)\int_{T_1}^{T_3} \int \eta^2u^p\;dxdt \\
            &+C(p) \int_{T_1}^{T_3}\int  u^p (a\nabla \eta,\nabla \eta) \;dxdt\\
     & + \int_{T_1}^{T_3}\int a u^p \eta |\Delta \eta|\;dxdt\\
     & + {{ C\tau\int u^p\eta^2 dx 
+C(p)\tau\int \left(1+\frac{|\na\eta|^{4p}}{\eta^{4p}}\right)\eta^2 dx}} 	.
    \end{align*}	

    \end{cor}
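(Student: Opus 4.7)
The strategy is to combine Lemma \ref{lemma:2} with the absorption estimate of Lemma \ref{lemma3}, and then to localize in time by testing against a suitable temporal cut-off. The first step is to substitute the bound from Lemma \ref{lemma3} into the right-hand side of Lemma \ref{lemma:2}, choosing the parameter $\varepsilon$ small enough (for instance $\varepsilon \leq 1/(2p)$) that the dissipation term $\varepsilon(p-1)\int a|\nabla(\eta u^{p/2})|^2\,dx$ produced by Lemma \ref{lemma3} can be absorbed into the left-hand side $\frac{p-1}{p}\int a|\nabla(\eta u^{p/2})|^2\,dx$, leaving at least $\frac{p-1}{2p}$ of the dissipation available. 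After this absorption, and after using $|\mathrm{Tr}(aD^2\eta)| = a|\Delta\eta|$, the scheme is reduced to a clean (discrete) differential inequality of the shape
\begin{equation*}
D_\tau F(t) + \frac{p-1}{2p}D(t) \leq C(p,\varepsilon)F(t) + G(t) + R_\tau(t),
\end{equation*}
where $F(t) := \int \eta^{2} u^{p}\,dx$, $D(t) := \int a|\nabla(\eta u^{p/2})|^{2}\,dx$, $G(t)$ collects the cut-off remainders $C(p)\int u^{p}(a\nabla\eta,\nabla\eta)\,dx + \int a u^{p}\eta|\Delta\eta|\,dx$, and $R_\tau(t)$ collects the vanishing $\tau$-penalty terms.

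The second step is to introduce a smooth time cut-off $\chi:[0,T]\to[0,1]$ with $\chi\equiv 0$ on $[0,T_1]$, $\chi\equiv 1$ on $[T_2,T_3]$ and $|\chi'|\leq 2/(T_2-T_1)$. Applying the discrete Leibniz rule $D_\tau(\chi F)(t) = \chi(t)D_\tau F(t) + F(t-\tau)D_\tau\chi(t)$ and using the previous differential inequality gives
\begin{equation*}
D_\tau(\chi F)(t) + \chi(t)\frac{p-1}{2p}D(t) \leq \left(\frac{2}{T_2-T_1}+C(p,\varepsilon)\right)F(t) + \chi(t)\bigl(G(t)+R_\tau(t)\bigr),
\end{equation*}
valid for $t\in[T_1,T_3]$, where I have bounded $F(t-\tau)$ by its value on a slightly enlarged interval (the $O(\tau)$ shift is harmless and only contributes to the already present $C\tau$ terms).

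The third step is to sum (integrate) this discrete inequality from $T_1$ to an arbitrary $t_*\in[T_2,T_3]$. Since $\chi(T_1)=0$ and $\chi(t_*)=1$ the telescoping of $D_\tau(\chi F)$ yields $F(t_*)$ on the left, while on the right the time cut-off factor $\chi$ restricts all remaining integrals to $[T_1,T_3]$. This produces exactly the inequality
\begin{equation*}
F(t_*) + \frac{p-1}{2p}\int_{T_2}^{t_*}D(s)\,ds \leq \left(\frac{1}{T_2-T_1}+C(p,\varepsilon)\right)\!\int_{T_1}^{T_3}\!F\,ds + \int_{T_1}^{T_3}\!G\,ds + \int_{T_1}^{T_3}\!R_\tau\,ds.
\end{equation*}
Taking the supremum over $t_*\in[T_2,T_3]$ on the first term on the left, and choosing $t_*=T_3$ on the dissipation term, gives the claimed inequality (the coefficient $\frac{p-1}{4p}$ in the statement is slack compared to the $\frac{p-1}{2p}$ obtained).

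The main obstacle I expect is purely bookkeeping: correctly handling the discrete product rule for $D_\tau$, since the backward difference produces time-shifted evaluations $F(t-\tau)$ that must be compared to $F(t)$ on the right-hand side. This is resolved by replacing $F(t-\tau)$ with $\sup_{s\in[T_1-\tau,T_3]}F(s)$ inside the cut-off strip and absorbing the $\tau$-level discrepancy into the $R_\tau$ terms already present in Lemma \ref{lemma:2}, so that no new term appears in the final estimate.
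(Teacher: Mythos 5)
Your argument is correct and uses the same two ingredients as the paper (the Caccioppoli-type bound of Lemma \ref{lemma:2} and the $\varepsilon$-Poincar\'e absorption underlying Lemma \ref{lemma3}), but it localizes in time differently: you multiply the discrete inequality by a temporal cut-off $\chi$, use the backward-difference product rule $D_\tau(\chi F)(t)=\chi(t)D_\tau F(t)+F(t-\tau)D_\tau\chi(t)$ and telescope, whereas the paper integrates from $t_1$ to $t_2$ and then averages over the initial time $t_1\in(T_1,T_2)$, which is what produces the factor $\frac{1}{T_2-T_1}$ there. Both devices are standard and give the stated estimate. One genuine advantage of your ordering is that you absorb the $(p-1)\int\eta^2u^{p+1}$ term via \eqref{eqn:epsilon_Poincare_inequality strongerII} pointwise in time, \emph{before} localizing; the paper instead applies Lemma \ref{lemma3} at the very end over $[T_1,T_3]$, so the dissipation it must absorb lives on $[T_1,T_3]$ while the left-hand side only retains $\int_{T_2}^{T_3}$ — your route sidesteps this interval mismatch (and in fact retains the larger coefficient $\frac{p-1}{2p}$). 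Two bookkeeping points: with your smooth cut-off and $|\chi'|\le 2/(T_2-T_1)$ you obtain $\frac{2}{T_2-T_1}$ rather than the stated $\frac{1}{T_2-T_1}$; since only finite differences of $\chi$ are used, a piecewise-linear (Lipschitz) cut-off with slope $\frac{1}{T_2-T_1}$ recovers the exact constant. Also, the product rule produces $F(t-\tau)$, so the time integral on the right starts at $T_1-\tau$; your suggestion to absorb this into the $O(\tau)$ terms is not quite right, because it would cost $\tau\sup_t F(t)$, which is not uniform in $\tau$ — the clean fix is simply to start the cut-off at $T_1+\tau$ (or accept the lower limit $T_1-\tau$), which is immaterial for the Moser iteration that follows. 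Neither point is a gap in substance.
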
  
  
\begin{proof}  
  We start with the bound found in Lemma \ref{lemma:2} 
  \begin{align*}
      & \frac{d}{dt}\int \eta^2 u^p\;dx + \frac{(p-1)}{p}\int a|\nabla (\eta u^{p/2})|^2\;dx\\
      & \leq {{(p-1)\int \eta^2 u^{p+1}\;dx}} +C(p) \int u^p (a\nabla \eta,\nabla \eta) \;dx-\int a u^p \eta \Delta\eta\;dx .
         \end{align*}
         
         

    Integrating this inequality from $t_1$ to $t_2$ shows that the term  
    \begin{align*}
      \int \eta^2 u^p(t_2)\;dx-\int \eta^2u^p(t_1)\;dx + \frac{(p-1)}{p}\int_{t_1}^{t_2}\int a|\nabla (\eta u^{p/2})|^2\;dxdt
    \end{align*}	
    is no larger than	
    \begin{align*}	
     (p-1) \int_{t_1}^{t_2}\int \eta^2 u^{p+1}\;dxdt
      +C(p) \int_{t_1}^{t_2}\int  u^p (a\nabla \eta,\nabla \eta) \;dxdt- \int_{t_1}^{t_2}\int a u^p \eta \Delta \eta\;dxdt. 	
    \end{align*}	
    
For a fixed $t_2 \in (T_2,T_3)$, we take the average with respect to $t_1\in (T_1,T_2)$ in both sides of the inequality. This yields 
  \begin{align*}
  \frac{1}{T_2-T_1}\int_{T_1}^{T_2} \int \eta^2 u^p(t_2)\;dxdt_1 &+ \frac{(p-1)}{p}\frac{1}{T_2-T_1}\int_{T_1}^{T_2}  \int_{t_1}^{t_2}\int a|\nabla (\eta u^{p/2})|^2\;dxdtdt_1\\
      \le  &\; \frac{1}{T_2-T_1}\int_{T_1}^{T_2} \int \eta^2u^p(t_1)\;dxdt_1 \\
      &+ (p-1) \frac{1}{T_2-T_1}\int_{T_1}^{T_2} \int_{t_1}^{t_2}\int \eta^2 u^{p+1}\;dxdtdt_1\\
      &+C(p) \frac{1}{T_2-T_1}\int_{T_1}^{T_2}\int_{t_1}^{t_2}\int  u^p (a\nabla \eta,\nabla \eta) \;dxdtdt_1\\
     & - \frac{1}{T_2-T_1}\int_{T_1}^{T_2} \int_{t_1}^{t_2}\int a u^p \eta \Delta \eta\;dxdtdt_1,
    \end{align*}	 
which implies 
\begin{align*}
  \int \eta^2 u^p(t_2)\;dx &+ \frac{(p-1)}{p}  \int_{T_2}^{t_2}\int a|\nabla (\eta u^{p/2})|^2\;dxdt\\
      \le  &\; \frac{1}{T_2-T_1}\int_{T_1}^{T_2} \int \eta^2u^p(t_1)\;dxdt_1 + (p-1) \int_{T_1}^{t_2}\int \eta^2 u^{p+1}\;dxdt\\
      &+C(p) \int_{T_1}^{t_2}\int  u^p (a\nabla \eta,\nabla \eta) \;dxdt + \int_{T_1}^{t_2}\int a u^p \eta |\Delta \eta|\;dxdt. 	
    \end{align*}	
   Since this holds for every $t_2\in (T_2,T_3)$, this implies the inequality
  \begin{align*}
   \sup \limits_{T_2 \leq t\leq T_3} \left \{ \int \eta^2 u^p(t)\;dx\right\} &+ \frac{(p-1)}{p}  \int_{T_2}^{T_3}\int a|\nabla (\eta u^{p/2})|^2\;dxdt\\
      \le  &\; \frac{1}{T_2-T_1}\int_{T_1}^{T_3} \int \eta^2u^p(t)\;dxdt + (p-1) \int_{T_1}^{T_3}\int \eta^2 u^{p+1}\;dxdt\\
            &+C(p) \int_{T_1}^{T_3}\int  u^p (a\nabla \eta,\nabla \eta) \;dxdt + \int_{T_1}^{T_3}\int a u^p \eta |\Delta \eta|\;dxdt. 	
    \end{align*}	
  As the last step we use Lemma \ref{lemma3} with $\varepsilon < \frac{p-1}{4p^2}$ and get 
   \begin{align*}
   \sup \limits_{T_2 \leq t\leq T_3} \left \{ \int \eta^2 u^p(t)\;dx\right\} &+ \frac{(p-1)}{4p}  \int_{T_2}^{T_3}\int a|\nabla (\eta u^{p/2})|^2\;dxdt\\
      \le  &\; \frac{1}{T_2-T_1}\int_{T_1}^{T_3} \int \eta^2u^p(t)\;dxdt + C(p,\varepsilon) \int_{T_1}^{T_3}\int \eta^2 u^p\;dxdt\\
            &+C(p) \int_{T_1}^{T_3}\int  u^p (a\nabla \eta,\nabla \eta) \;dxdt  + \int_{T_1}^{T_3}\int a u^p \eta |\Delta \eta|\;dxdt. 	
    \end{align*}

 \end{proof}
 
 Let us fix now $R$ and $r$ with $1<r<R$ and denote respectively with $\eta_r$ and with $\eta_R$ the cut-off function compactly supported in $B_r$ and $B_R$. Let moreover $ \eta_R =1$ on $B_r$, and $|\nabla \eta_R | \le c \frac{1}{(R-r)} \eta_R$ and $\Delta \eta_R \le c \frac{1}{(R-r)^2}  \eta_R$.

 The last corollary says that for $T_3=T$
   \begin{align*}
   \sup \limits_{T_1 \leq t\leq T} \left \{ \int (\eta_r u^{p/2})^2\;dx\right\} &+ \frac{(p-1)}{4p}  \int_{T_1}^{T}\int a[u]|\nabla (\eta_r u^{p/2})|^2\;dxdt\\
      \le  &\; \left(\frac{1}{T_1-T_0} + C(p,\varepsilon)\right)\int_{T_0}^{T} \int \eta_r^2u^p\;dxdt \\
            &+C(p) \int_{T_0}^{T}\int  u^p a[u]| \nabla \eta_r|^2  \;dxdt\\
     & + \int_{T_0}^{T}\int a[u] u^p \eta_r |\Delta \eta_r|\;dxdt + {{C\tau}}. 	
    \end{align*}	
 
 Using Lemma \ref{lem:Inequalities Sobolev weight aII} for any 
 \begin{align*}
    q \in \left( 1 , 2\left(1+\frac{2}{3}\right)\right),
    \end{align*}
    we have
     \begin{align*}
     \left ( \int_{T_1}^T \int a[u] (\eta_r u^{p/2})^q \;dxdt \right )^{2/q}\le  &\; \left(\frac{1}{T_1-T_0} + C(p,\varepsilon)\right)\int_{T_0}^{T} \int \eta_r^2u^p\;dxdt \\
            &+C(p) \int_{T_0}^{T}\int  a[u] u^p | \nabla \eta_r|^2  \;dxdt\\
     & + \int_{T_0}^{T}\int a[u] u^p \eta_r |\Delta \eta_r|\;dxdt+ {{C\tau}} \\
     \le  &\; C(R) \left(\frac{1}{T_1-T_0} + 1\right)\int_{T_0}^{T} \int a[u]\eta_r^2u^p\;dxdt \\
            &+C(p) \frac{1}{r^2}\int_{T_0}^{T} \int  a[u] u^p \eta_r^2  \;dxdt + {{C\tau}},   
      \end{align*}	
 taking into account the bound from below of $a[u]$ (\ref{a_tau_bound_below}). Since $\eta_R =1$ on the support of $\eta_r$ we can write $ \eta_r^2  \le \eta_R^q$ and get  
 \begin{align*}
     \left ( \int_{T_1}^T \int a[u] (\eta_r u^{p/2})^q \;dxdt \right )^{2/q} \le & C(R,p) \left(\frac{1}{T_1-T_0} + 1\right)\int_{T_0}^{T} \int a[u]\eta_R^q u^p\;dxdt \\
            &+C(p) \frac{1}{r^2}\int_{T_0}^{T} \int  a[u] u^p \eta_R^q  \;dxdt + {{C\tau}}\\
          \le & C(R,p) \left(\frac{1}{T_1-T_0} + 1\right)\int_{T_0}^{T} \int a[u]\eta_R^q u^p\;dxdt + {{C\tau}}.    
      \end{align*}
      Taking the power $\frac{1}{p}$ on both sides one gets 
      \begin{align*}
     \left ( \int_{T_1}^T \int a[u] \eta_r^q u^{p \frac{q}{2}} \;dxdt \right )^{\frac{1}{p}\frac{2}{q}} \le & \left( C(R,p) \left(\frac{1}{T_1-T_0} + 1\right)\right)^{\frac{1}{p}} \left(\int_{T_0}^{T} \int a[u]\eta_R^q u^p\;dxdt \right)^{\frac{1}{p}} +  {{C\tau^{\frac{1}{p}}}}.
      \end{align*}
	The last inequality suggests an iteration, taking successively exponents values $p \frac{q}{2}$, $p \left(\frac{q}{2}\right)^2$, $p \left(\frac{q}{2}\right)^3$, ..., $p \left(\frac{q}{2}\right)^n$ and a sequence of times are radii 
	$$ T_n = \frac{T}{4} \left( 2-\frac{1}{2^n}\right),\quad R_n = \frac{R}{2} \left( 1+\frac{1}{2^n}\right).$$
	For 
	$$
	E_n[u^{(\tau)}] := \left( \int_{T_n}^T \int a[u^{(\tau)}] \eta_n^q {u^{(\tau)}}^{p \left(\frac{q}{2}\right)^n} \;dxdt \right )^{\frac{1}{p}\left(\frac{2}{q}\right)^n}
$$
 one gets recursively 
 \begin{align*}
 E_{n+1} & \le 2^{n\frac{1}{p}\left(\frac{2}{q}\right)^n}\left( C(R,p) \left(\frac{1}{T} + 1\right)\right)^{\frac{1}{p}\left(\frac{2}{q}\right)^n}E_n  +  {{C\tau^{\frac{1}{p}\left(\frac{2}{q}\right)^n}}}\\
&  \le 2^{\frac{1}{p} \sum_1^n j\left(\frac{2}{q}\right)^j}\left( C(R,p) \left(\frac{1}{T} + 1\right)\right)^{\frac{1}{p}\sum_1^n \left(\frac{2}{q}\right)^j}E_0 + {{  C \sum_1^n\tau^{\frac{1}{p}\left(\frac{2}{q}\right)^i}}},
 \end{align*}
 with 
 $$
 E_0 = \left( \int_{T/4}^T \int_{B_R(0)}  a[u^{(\tau)}] {u^{(\tau)}}^{p} \;dxdt \right )^{\frac{1}{p}}.
 $$
 We first recall that 
$$
\| a[u^{(\tau)}]\|_{L^\infty(0,T,L^q(B_R(0)))} \le C,
$$
with $C$ independent on $\tau$ for each  $q\le 3$, which implies, together with H\"older's inequality, 
 \begin{align*}
      \int_{0}^{T}\int_{B_R(0)}  {u^{(\tau)}}^pa[u^{(\tau)}]\;  dxdt 
       \leq &   \int_{0}^{T} \left( \int_{B_R(0)}  a^q[u^{(\tau)}]\;  dx\right)^{\frac{1}{q}} \left( \int_{B_R(0)}  {u^{(\tau)}}^{pq'}\;  dx\right)^{\frac{1}{q'}} \;dt\\
       \le &  C(R) \int_{0}^{T}  \left( \int_{B_R(0)}  {u^{(\tau)}}^{pq'}\;  dx\right)^{\frac{1}{q'}}  \;dt 
    \end{align*}	  		
with $
q'\ge \frac{3}{2}$. Choosing $p$ such that 
\begin{align*}
pq' = \frac{5}{3}
\end{align*}
and applying (\ref{June20.u53}) we get 
 \begin{align*}
      \int_{0}^{T}\int_{B_R(0)}  {u^{(\tau)}}^pa[u^{(\tau)}]\;  dxdt  \leq C(R),
    \end{align*}	
    for some $1<p\le 10/9$ and $C(R)$ solely dependent on $R$ (and not on $\tau$). Taking into account that for $2<q<10/3$ and $1<p\le 10/9$
    \begin{align*}
    2^{\frac{1}{p} \sum_1^n j\left(\frac{2}{q}\right)^j}\left( C(R,p) \left(\frac{1}{T} + 1\right)\right)^{\frac{1}{p}\sum_1^n \left(\frac{2}{q}\right)^j} & \le 2^{\frac{1}{p} \sum_1^\infty j\left(\frac{2}{q}\right)^j}\left( C(R,p) \left(\frac{1}{T} + 1\right)\right)^{\frac{1}{p}\sum_1^\infty \left(\frac{2}{q}\right)^j}\\
    & \le C(R,p) \left(\frac{1}{T} + 1\right)^{\alpha},\quad \textrm{with} \quad \alpha > \frac{9}{4}
    \end{align*}
    we get that 
    $$
    E_{n+1}[u^{(\tau)}] \le C(R) \left(\frac{1}{T} + 1\right)^{\alpha} + {{  C \sum_1^n\tau^{\frac{1}{p}\left(\frac{2}{q}\right)^i}}}. 
    $$
    Taking the $\liminf$ for $\tau \to 0$ on both side of the last inequality and using pointwise convergences \eqref{utau.aecnv} and \eqref{conv_punt_a} and Fatou's lemma, for each $n\ge 0$ one gets 
     $$
    E_{n+1}[u] \le C(R) \left(\frac{1}{T} + 1\right)^{\alpha}.
    $$
 Since 
    $$
    E_n \ge \left(\inf_{B_{R/2}(0) \times (T/2,T)} a[u]\right)^{\frac{1}{p}\left(\frac{2}{q}\right)^n} \left( \int_{T/2}^T \int_{B_{R/2}(0)} u^{p \left(\frac{q}{2}\right)^n} \;dxdt \right )^{\frac{1}{p}\left(\frac{2}{q}\right)^n}
    $$
 the limit $n\to +\infty$   yields 
 $$
 \|u\|_{L^\infty( B_{R/2}(0) \times (T/2,T))} \le  C(R) \left(\frac{1}{T} + 1\right)^{\alpha},\quad \alpha > \frac{9}{4}.
 $$
 This finishes the proof of Theorem \ref{thr.reg}.

 
\section*{Appendix}

\begin{lemma}[Another sufficient condition for the $\eps-$Poincar\'e inequality]\label{lem.cnd.epsPoi}
Let $u : \R^{3}\times [0,\infty)\to [0,\infty)$ be a solution to \eqref{landau} with the properties stated in Thr.~\ref{thr.ex}. Moreover
let \eqref{cnd.epsPoi} hold. Then the $\eps-$Poincar\'e inequality \eqref{eqn:epsilon_Poincare_inequality strongerII}
is fulfilled for all functions $\phi : \R^{3}\to\R$ such that the right-hand side of \eqref{eqn:epsilon_Poincare_inequality strongerII}
is finite.
\end{lemma}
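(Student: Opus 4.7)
\emph{Sketch of plan.} The starting point is the identity
\[\int_{\mathbb{R}^{3}} u\phi^{2}\,dx = -\int_{\mathbb{R}^{3}} \phi^{2}\Delta a\,dx = 2\int_{\mathbb{R}^{3}}\phi\,\nabla a\cdot\nabla\phi\,dx,\]
obtained from the Poisson equation $-\Delta a[u]=u$ and two integrations by parts; this is legitimate whenever both integrals on the right--hand side of \eqref{eqn:epsilon_Poincare_inequality strongerII} are finite, via a standard density/cut-off argument combined with the local regularity of $a[u]$ provided by Thr.~\ref{thr.ex}.

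To activate both the hypothesis \eqref{cnd.epsPoi} and the pointwise lower bound $a[u]\geq C\gamma$ (inherited from \eqref{a_tau_bound_below} through the uniform mass and second moment), I would factorize the integrand as
\[\phi\,\nabla a\cdot\nabla\phi \;=\; (\phi\,\gamma\,a^{-1/2})\cdot(\gamma^{-1}\nabla a)\cdot(a^{1/2}\nabla\phi)\]
and apply H\"older's inequality with exponents $(r,q,2)$, where $r:=2q/(q-2)$ so that $\tfrac{1}{r}+\tfrac{1}{q}+\tfrac{1}{2}=1$; the assumption $q>3$ places $r\in(2,6)$. Since $\|\gamma^{-1}\nabla a\|_{L^{q}}$ is finite by hypothesis, a single Young inequality yields, for any $\delta>0$,
\[\int u\phi^{2}\,dx \leq \delta\int a|\nabla\phi|^{2}\,dx+\frac{C}{\delta}\,\|\phi\,\gamma^{1/2}\|_{L^{r}(\mathbb{R}^{3})}^{2},\]
after using $a\geq C\gamma$ to replace $\phi\,\gamma\,a^{-1/2}$ by $C^{1/2}\phi\,\gamma^{1/2}$.

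The crux is then to bound $\|\phi\,\gamma^{1/2}\|_{L^{r}}$. Since $r\in(2,6)$, interpolation between $L^{2}$ and $L^{6}$ gives
\[\|\phi\,\gamma^{1/2}\|_{L^{r}}\leq \|\phi\,\gamma^{1/2}\|_{L^{2}}^{1-3/q}\,\|\phi\,\gamma^{1/2}\|_{L^{6}}^{3/q},\]
with the $L^{2}$--factor dominated by $\|\phi\|_{L^{2}}$ (since $\gamma\leq 1$). For the $L^{6}$--factor I would invoke the Sobolev embedding $\dot H^{1}(\mathbb{R}^{3})\hookrightarrow L^{6}(\mathbb{R}^{3})$ together with the product rule and the elementary identity $|\nabla\gamma|=\gamma^{2}\leq\gamma$, which gives
\[\|\nabla(\phi\,\gamma^{1/2})\|_{L^{2}}^{2}\leq 2\int\gamma|\nabla\phi|^{2}\,dx+\tfrac{1}{2}\int\gamma^{3}\phi^{2}\,dx\leq C\int a|\nabla\phi|^{2}\,dx+C\int\phi^{2}\,dx,\]
where $a\geq C\gamma$ is used once more. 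A further Young inequality of the form $x^{1-3/q}y^{3/q}\leq \mu\,y+C_{\mu}\,x$ converts the interpolation bound into $\|\phi\,\gamma^{1/2}\|_{L^{r}}^{2}\leq \mu\int a|\nabla\phi|^{2}\,dx+C_{\mu}\int\phi^{2}\,dx$. Combining with the previous display and choosing $\delta$ small, then $\mu\sim\delta^{2}$ smaller, yields \eqref{eqn:epsilon_Poincare_inequality strongerII} for any prescribed $\varepsilon>0$.

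The main obstacle is the careful weight bookkeeping in this chain: the Sobolev step has to produce $\int a|\nabla\phi|^{2}$ rather than $\int|\nabla\phi|^{2}$ (the latter is \emph{not} controlled by the hypotheses of the lemma), which forces the combined use of $|\nabla\gamma|\leq\gamma^{2}$ and $a\geq C\gamma$. The assumption $q>3$, which guarantees $r<6$ and thus the validity of the $L^{2}$--$L^{6}$ interpolation, is essential and cannot be relaxed within this scheme.
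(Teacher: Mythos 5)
Your proposal is correct and follows essentially the same route as the paper's proof: with $r=2q/(q-2)$ your $\|\phi\gamma^{1/2}\|_{L^r}$ is exactly the paper's $\|\gamma^{1/2}\phi\|_{L^{2p}}$ (where $q=2p/(p-1)$), and both arguments rest on the Poisson identity, H\"older against $\gamma^{-1}\nabla a\in L^q$ with $q>3$, a Gagliardo--Nirenberg/$L^2$--$L^6$ interpolation made admissible precisely by $q>3$, the bound $|\nabla(\gamma^{1/2}\phi)|\leq\gamma^{1/2}|\nabla\phi|+C\gamma^{1/2}|\phi|$, Young's inequality, and the lower bound $a[u]\geq C\gamma$. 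The only cosmetic difference is that you perform a three-factor H\"older first and carry the weight $a$ throughout, while the paper applies Young pointwise first and converts $\gamma$ into $a$ only at the end.
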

\begin{proof}
From the Poisson equation $-\Delta a = u$ it follows
\begin{align*}
& \int_{\R^{3}}u\phi^{2}dx = -\int_{\R^{3}}\phi^{2}\Delta a dx = 2\int_{\R^{3}}\phi\na a\cdot\na \phi dx.
\end{align*}
Applying Young's inequality leads to
\begin{align}\label{June27.1}
& \int_{\R^{3}}u\phi^{2}dx \leq \eps\int_{\R^{3}}|\na \phi |^{2}\gamma dx + \frac{1}{\eps}\int_{\R^{3}}|\phi\na a|^{2}\gamma^{-1}dx .
\end{align}
Let us consider the second integral on the right-hand side of \eqref{June27.1}. Let $1<p<3$.
H\"older inequality allows us to write
\begin{align*}
& \frac{1}{\eps}\int_{\R^{3}}|\phi\na a|^{2}\gamma^{-1}dx\leq 
\frac{1}{\eps}\left( \int_{\R^{3}} |\gamma^{-1}\na a|^{2p/(p-1)} dx \right)^{1-1/p}
\left( \int_{\R^{3}} \gamma^{p}|\phi|^{2p} dx \right)^{1/p}.
\end{align*}
Assumption \eqref{cnd.epsPoi} implies that a suitable $p<3$ exists such that
\begin{align}\label{June27.1b}
& \frac{1}{\eps}\int_{\R^{3}}|\phi\na a|^{2}\gamma^{-1}dx\leq 
\frac{C_{T}}{\eps}\left( \int_{\R^{3}} \gamma^{p}|\phi|^{2p} dx \right)^{1/p} = \frac{C_{T}}{\eps}\|\gamma^{1/2}\phi\|_{L^{2p}(\R^{3})}^{2}\qquad 0<t<T,
\end{align}
for some constant $C_{T}>0$ depending on $T$. Gagliardo-Nirenberg inequality implies that
\begin{align}\label{June27.GN}
& \|\gamma^{1/2}\phi\|_{L^{2p}(\R^{3})}\leq C\|\na(\gamma^{1/2}\phi)\|_{L^{2}(\R^{3})}^{\eta}\|\gamma^{1/2}\phi\|_{L^{2}(\R^{3})}^{1-\eta},
\end{align}
for some $\eta\in (0,1)$. We point out that $\eta<1$ because $p<3$.
Putting \eqref{June27.1b}, \eqref{June27.GN} together leads to
\begin{align*}
& \frac{1}{\eps}\int_{\R^{3}}|\phi\na a|^{2}\gamma^{-1}dx\leq 
\frac{C_{T}}{\eps}\|\na(\gamma^{1/2}\phi)\|_{L^{2}(\R^{3})}^{2\eta}\|\gamma^{1/2}\phi\|_{L^{2}(\R^{3})}^{2(1-\eta)}\qquad 0<t<T.
\end{align*}
From Young inequality it follows
\begin{align*}
& \frac{1}{\eps}\int_{\R^{3}}|\phi\na a|^{2}\gamma^{-1}dx\leq 
\eps\|\na(\gamma^{1/2}\phi)\|_{L^{2}(\R^{3})}^{2} + C_{T,\eps}\|\gamma^{1/2}\phi\|_{L^{2}(\R^{3})}^{2}\qquad 0<t<T.
\end{align*}
However, since $|\na(\gamma^{1/2}\phi)|\leq \gamma^{1/2}|\na\phi| + \frac{1}{2}\gamma^{3/2}|\phi|\leq  \gamma^{1/2}|\na\phi| + C|\gamma^{1/2}\phi|$, we obtain
\begin{align*}
& \frac{1}{\eps}\int_{\R^{3}}|\phi\na a|^{2}\gamma^{-1}dx\leq 
\eps\int_{\R^{3}}|\na \phi |^{2}\gamma dx  + C_{T,\eps}\|\gamma^{1/2}\phi\|_{L^{2}(\R^{3})}^{2}\qquad 0<t<T.
\end{align*}
Plugging the above inequality inside \eqref{June27.1} and noticing that $\gamma\leq 1$ yields 
\begin{align}\label{June27.2}
& \int_{\R^{3}}u\phi^{2}dx \leq 2\eps\int_{\R^{3}}|\na \phi |^{2}\gamma dx + C_{T,\eps}\int_{\R^{3}}|\phi|^{2}dx\qquad 0<t<T.
\end{align}
However, it is known that $a[u](x)\geq C\gamma(x)$ for $x\in\R^{3}$, with $C$ being a positive
constant which depends only on the mass and second moment of $u$. Since we know that 
$\sup_{0<t<T}\int_{\R^{3}}(1+|x|^{2})u(x,t)dx<\infty$, from \eqref{June27.2} inequality
\eqref{eqn:epsilon_Poincare_inequality strongerII} follows. This finishes the proof.
\end{proof}


\begin{thebibliography}{15}

\bibitem{AleVil2004}
R. Alexandre and C. Villani. {\em On the Landau approximation in plasma physics.}
Ann. Inst. Henri Poincar\'e, C Anal. Non Lin\'eaire 21 (1) (2004) 61-95.

\bibitem{CWW}
S.-Y. A. Chang, J.M. Wilson and T.H. Wolff.
{\em Some weighted norm inequalities concerning the Schr\"odinger operators. }
Comment. Math. Helv. 60 (1985), no. 2, 217-246. 

\bibitem{CW1}
 S. Chanillo and R. Wheeden. {\em L-p estimates for fractional integrals and Sobolev inequalities with applications to Schr\"odinger operators.} Communications in partial differential equations, 10(9):1077 - 1116, 1985.
 
 \bibitem{CW2}
S. Chanillo and R. Wheeden. {\em Weighted Poincar\'e and Sobolev inequalities and estimates for weighted Peano maximal functions. } American Journal of Mathematics, 107(5):1191-1226, 1985.

\bibitem{Des2015}
L. Desvillettes. {\em Entropy dissipation estimates for the Landau equation in the Coulomb case and applications.}
Journal of Functional Analysis 269 (2015) 1359 - 1403.

\bibitem{F83} 
C. Fefferman. {\em The uncertainty principle}. Bull. Amer. Math. Soc. 9, (1983) 129-206.

\bibitem{FP78}
C. Fefferman and D. H. Phong. {\em On positivity of pseudo-differential operators.} Proc. Nat. Acad. Sci. U.S.A. 75 (1978), no. 10, 4673 - 4674. 

\bibitem{GG}
M. Gualdani and N. Guillen. {\em Estimates for radial solutions of the homogeneous Landau equation with Coulomb potential. } Analysis and PDE, 9(8):1772 - 1809, 2016.

\bibitem{GG17}
M. Gualdani and N. Guillen. {\em On $A_p$ weights and the homogeneous Landau equation}. Preprint. {\tt ArXiv:1708.00067}

\bibitem{GKS}
P. Gressman, J. Krieger, and R. Strain. {\em A non-local inequality and global existence. }Advances in Mathematics, 230(2):642 - 648, 2012.

\bibitem{KS}
J. Krieger and R.Strain. {\em Global solutions to a non-local diffusion equation with quadratic nonlinearity.} Comm. Partial Differential Equations, 37(4):647-689, 2012.

\bibitem{Jue2015}
A.~J\"ungel. {\em The boundedness-by-entropy method for cross-diffusion systems.} Nonlinearity 28.6 (2015), 1963.

\bibitem{Jue2016}
A.~J\"ungel. {\em Entropy methods for diffusive partial differential equations.} {Springer}, 2016.

\bibitem{SW}
E. Sawyer and R. Wheeden. {\em Weighted inequalities for fractional integrals on Euclidean and homogeneous spaces.} American Journal of Mathematics, 114(4):813- 874, 1992.

\bibitem{Vil1998}
C. Villani. {\em On a new class of weak solutions to the spatially homogeneous Boltzmann and Landau equations.}
Arch. Ration. Mech. Anal. 143 (3) (1998) 273-307.

\bibitem{Zei90}
E. Zeidler. {\em Nonlinear functional analysis and its applications}, vol. II/B (1990).

\end{thebibliography}
\end{document}